\newtheorem{theorem}{Theorem}[section]
\newtheorem{lemma}[theorem]{Lemma}
\newtheorem{corollary}[theorem]{Corollary}
\theoremstyle{definition}
\newtheorem{definition}[theorem]{Definition}
\newtheorem*{example*}{Example}
\theoremstyle{remark}
\numberwithin{equation}{section}
\newcommand{\hspt}[3]{\mathrm{spt}^{#1}_{#2}\hspace{-.2em} \left(#3\right) }  
\newcommand{\spt}[1]{\mathrm{spt}\left(#1\right) }  
\newcommand{\aqprod}[3]{\left(#1;#2\right)_{#3}}
\newcommand{\aprod}[2]{\left(#1\right)_{#2}}
\newcommand{\X}[1]{
    \IfEqCase{#1}{
        {38}{X1}
        {39}{X2}
        {40}{X6}
        {41}{X3}
        {42}{X4}
        {46}{X5}
    }[\PackageError{X}{Undefined option to X: #1}{}]%
}
\def\xOffSetForTikzPicture{30}
\def\yOffSetForTikzPicture{5}
\author{CATHERINE BABECKI}
\address{The Pennsylvania State University\\
State College, Pennsylvania 16802, USA
\endgraf cmb6625@psu.edu}
\author{CHRIS JENNINGS-SHAFFER}
\address{Department of Mathematics, Oregon State University\\
Corvallis, Oregon 97331, USA
\endgraf cjenningsshaffer@ufl.edu}
\author{GEOFFREY SANGSTON}
\address{University of Miami\\
Coral Gables, Florida 33146, USA
\endgraf geoffreysangston@gmail.com}
\thanks{This research was supported by the National Science Foundation Grant DMS-1359173.}
\keywords{Number theory, partitions, Bailey's Lemma, Bailey pairs, 
ranks, cranks, rank moments, crank moments,
inequalities, partition inequalities, smallest parts function, spt function,
higher order smallest parts function}
\subjclass[2010]{Primary 11P81, 05A17}
\title{Higher Order Smallest Parts Functions and Rank-Crank Moment Inequalities from Bailey Pairs}
\begin{document}

\allowdisplaybreaks

\begin{abstract}
We generalize a result of Garvan on inequalities and interpretations of the 
moments of the partition rank and crank functions. In particular for nearly 30 
Bailey pairs, we introduce a rank-like function, establish inequalities with 
the moments of the rank-like function and an associated crank-like function, 
and give an associated so called higher order smallest parts function. In some 
cases we are able to deduce inequalities among the rank-like functions.
We also conjecture additional inequalities and
a large number of congruences for the higher order smallest parts functions.
\end{abstract}

\maketitle

\allowdisplaybreaks


\section{Introduction}

The celebrated Rogers-Ramanujan identities state that
\begin{align*}
	\sum_{n=0}^\infty 
	\frac{q^{n^2}}{\aprod{q}{n}}
	&=
		\frac{1}{\aqprod{q}{q^5}{\infty} \aqprod{q^4}{q^5}{\infty} }	
	,&
	\sum_{n=0}^\infty 
	\frac{q^{n^2+n}}{\aprod{q}{n}}	
	&=
		\frac{1}{\aqprod{q^2}{q^5}{\infty}\aqprod{q^3}{q^5}{\infty} }	
	,
\end{align*}
where here and throughout the article 
\begin{align*}
	(a)_n 
	&:= 
		(a;q)_n = \prod_{j=0}^{n-1}(1-aq^j)
	,&
	(a)_\infty 
	&:=
		(a;q)_\infty = \prod_{j=0}^{\infty}(1-aq^j)
	.
\end{align*}
While these identities were stated and proved by Rogers in \cite{Rogers1},
they did not gain attention until they were rediscovered by Ramanujan
two decades later. It would be impossible to give here an adequate account of how
the Rogers-Ramanujan identities have found their way into various branches of
mathematics and related sciences, so we direct the reader to 
\cite{Andrews1, Andrews2, AndrewsBaxter1, AndrewsBerndt1, Askey1, Berndt1, 
GriffinOnoOle1,  LepowskyWilson1}.
What we do mention about these identities is that Rogers actually had several more 
identities of this type, 
and perhaps most important is that to give uniform proofs of 
such identities Bailey \cite{Bailey1,Bailey2} introduced what would later be 
known as the Bailey pair machinery.
In particular, we recall that a pair of sequences $(\alpha,\beta)$ form a
Bailey pair relative to $(a,q)$ if
\begin{align*}
	\beta_n
	&=
	\sum_{k=0}^n
	\frac{\alpha_k}{\aqprod{q}{q}{n-k}\aqprod{aq}{q}{n+k}}
	,
\end{align*}
and a limiting form of Bailey's Lemma is that
\begin{align*}
	\sum_{n=0}^\infty
	\aprod{x}{n} \aprod{y}{n}\left(\frac{aq}{xy}\right)^n
	\beta_n
	&=
		\frac{\aprod{\frac{aq}{x}}{\infty} \aprod{\frac{aq}{y}}{\infty}}
		{\aprod{aq}{\infty} \aprod{\frac{aq}{xy}}{\infty}}
		\sum_{n=0}^\infty
		\frac{\aprod{x}{n} \aprod{y}{n} \left(\frac{aq}{xy}\right)^n \alpha_n}
		{	\aprod{\frac{aq}{x}}{n} \aprod{\frac{aq}{y}}{n} }
	.
\end{align*}
By letting $x,y\rightarrow\infty$ in Bailey's Lemma and plugging in the 
Bailey pair, relative to $(a,q)$, given by
\begin{align*}
	\beta_n
	&=
	\frac{1}{\aprod{q}{n}}
	,&
	\alpha_n 
	&=
	\frac{(-1)^n (1-aq^{2n}) \aprod{a}{n} a^n q^{\frac{n(3n-1)}{2}} }
	{(1-a)\aprod{q}{n}}
,
\end{align*}
one sees that the right hand side sums to the appropriate products 
for $a=1$ and $a=q$, according to the Jacobi triple product identity,
so that Bailey has given a uniform proof of both Rogers-Ramanujan identities.
After Bailey's success with the above method,
Slater \cite{Slater1,Slater2} demonstrated the incredible power of Bailey pairs 
by giving over 100 identities of the Rogers-Ramanujan type by further
introducing Bailey pairs where an appropriate choice of $x$ and $y$ would allow 
the right hand side of Bailey's Lemma to sum to an infinite product.

Besides identities of the Rogers-Ramanujan type, Bailey pairs have found 
numerous uses in the study of $q$-series and integer partitions. We discuss
two recent uses related to counting the number of smallest parts in integer
partitions. For this we recall that a partition of a positive integer $n$
is a non-increasing sequence of positive integers that sum to $n$; we agree that
there is a single partition of $0$, which is the empty partition. We let
$p(n)$ denote the number of partitions of $n$. 
As an example, we see $p(5)=7$ as the seven partitions of $5$ are given by
$5$, $4+1$, $3+2$, $3+1+1$, $2+2+1$, $2+1+1+1$, and $1+1+1+1+1$.
The rank of a partition is given as the largest part minus the number of parts.
The crank of a partition is defined as the largest part, if the partition does
not contain any ones, and otherwise is the number of parts larger than the 
number of ones minus the number of ones.
Of the partitions of $5$ listed previously we see their respective ranks are
$4$, $2$, $1$, $0$, $-1$, $-2$, and $-4$, whereas their respective cranks are
$5$, $0$, $3$, $-1$, $1$, $-3$, and $-5$. We let $N(m,n)$ denote the number of
partitions of $n$ with rank $m$ and let $M(m,n)$ denote the number of partitions
of $n$ with crank $m$. We call the respective generating functions $R(z,q)$ and
$C(z,q)$, that is to say,
\begin{align*}
	R(z,q)
	&=
		\sum_{n=0}^\infty \sum_{m=-\infty}^\infty
		N(m,n)z^mq^n
	,&
	C(z,q)
	&=
		\sum_{n=0}^\infty \sum_{m=-\infty}^\infty
		M(m,n)z^mq^n
	.	
\end{align*}

In \cite{Andrews3}, Andrews introduced the function $\spt{n}$, which counts the
total number of appearances of the smallest part in each partition of $n$. We 
can think of this as a weighted count on the partitions of $n$, where the weight
is the number of times the smallest part appears. From the partitions of $5$
listed above, we see that $\spt{5}=14$. Given a relation with the second 
moment of the rank function, 
$\left(z\frac{\partial}{\partial z}\right)^2 R(z,q) \big|_{z=1}$,
Andrews proved that 
$\spt{5n+4}\equiv 0\pmod{5}$, $\spt{7n+5}\equiv 0\pmod{7}$, and 
$\spt{13n+6}\equiv 0\pmod{13}$. These congruences are reminiscent of 
Ramanujan's congruences for the partition function
$p(5n+4)\equiv 0\pmod{5}$, $p(7n+5)\equiv 0\pmod{7}$, and 
$p(11n+6)\equiv 0\pmod{11}$. 
One can easily verify that a generating function for $\spt{n}$ is given by
\begin{align*}
	S(q)
	&=
		\sum_{n=0}^\infty
		\spt{n}q^n
	=
		\sum_{n=1}^\infty
		\frac{q^n}{(1-q^n)^2 \aprod{q^{n+1}}{\infty}}
.
\end{align*}
With this in mind Andrews, Garvan, and Liang \cite{AndrewsGarvanLiang1} 
introduced a so called spt-crank as the series
\begin{align*}
	S(z,q)
	&=
		\sum_{n=0}^\infty
		\sum_{m=-\infty}^\infty
		N_{S}(m,n)z^mq^n
	=
		\sum_{n=1}^\infty
		\frac{q^n \aprod{q^{n+1}}{\infty} } 
		{ \aprod{zq^n}{\infty}\aprod{z^{-1}q^n}{\infty} }			
	.
\end{align*}
It is trivial to see that $S(1,q)=S(q)$. Three facts that are not so trivial to
prove are that each $N_{S}(m,n)$ is a non-negative integer, the 
$5$-dissection of $S(e^{2\pi i/5},q)$ gives another proof of the 
congruence $\spt{5n+4}\equiv 0\pmod{5}$, and the 
$7$-dissection of $S(e^{2\pi i/7},q)$ gives another proof of the 
congruence $\spt{7n+5}\equiv 0\pmod{7}$. The essential trick for the
dissections is to take the following Bailey pair relative to $(1,q)$,
\begin{align*}
	\beta_n
	&=
	\frac{1}{\aprod{q}{n}}
	,&
	\alpha_n 
	&=
	\left\{\begin{array}{ll}
		1 												& \mbox{ if } n=0,
		\\
		(-1)^n (1+q^{n}) q^{\frac{n(3n-1)}{2}}  	&\mbox{ if } n\ge 1,
	\end{array}\right.
\end{align*}
and apply Bailey's Lemma with $x=z$ and $y=z^{-1}$ to deduce that
\begin{align*}
	(1-z)(1-z^{-1})S(z,q)
	&=
		R(z,q)-C(z,q)
	.
\end{align*}
Inspired by this identity, Garvan and the second author \cite{GarvanJenningsShaffer1} 
investigated 
smallest parts functions related to overpartitions and partitions without 
repeated odd parts in terms of spt-cranks and differences between ranks and
cranks. Again the essential trick was to apply 
Bailey's Lemma with $x=z$ and $y=z^{-1}$, but with a different Bailey pair. 
In particular, there we used the Bailey pairs
\begin{align*}
	\beta_n
	&=
		\frac{1}{\aqprod{q^2}{q^2}{n}}
	,&
	\alpha_n 
	&=
		\left\{\begin{array}{ll}
		1 							& \mbox{ if } n=0,
		\\
		(-1)^n 2 q^{n^2}  			&\mbox{ if } n\ge 1,
		\end{array}\right.
	\\	
	\beta_n
	&=
		\frac{1}{2}
		\left(\frac{1}{\aqprod{q^2}{q^2}{n}}+\frac{(-1)^n}{\aqprod{q^2}{q^2}{n}}\right)
	,&
	\alpha_n 
	&=
		\left\{\begin{array}{ll}
		1 							& \mbox{ if } n=0,
		\\
		(-1)^n (1+q^{n^2})  		&\mbox{ if } n\ge 1,
		\end{array}\right.
	\\
	\beta_n
	&=
		\frac{1}{\aqprod{-q,q^2}{q^2}{n}}
	,&
	\alpha_n 
	&=
		\left\{\begin{array}{ll}
		1 								& \mbox{ if } n=0,
		\\
		(-1)^n 2 q^{2n^2-n}(1+q^{2n})  	&\mbox{ if } n\ge 1,
		\end{array}\right.
\end{align*}
of which the first two are relative to $(1,q)$ and the third is relative to 
$(1,q^2)$.
Here we note that
the same form of Bailey's Lemma gave four identities for spt-crank functions.
It is then natural to ask what would happen if one was to look at all of
Slater's Bailey pairs in this framework of spt-cranks; Garvan and the second author 
carried this out in a series of articles 
\cite{GarvanJenningsShaffer2, JenningsShaffer1, JenningsShaffer2}.
This can be compared to Slater's work, but instead of choosing Bailey pairs 
that would result in a series that sums to a product by the Jacobi triple
product identity, we needed to choose Bailey pairs with $a=1$ and such that
the associated spt-crank-type function dissected nicely at roots of unity.
Altogether this process resulted in over 20 spt-crank-type functions 
and associated spt-type functions with congruences.

Another use of Bailey's Lemma related to smallest parts functions
arose in \cite{Garvan1}, where Garvan considered the ordinary and symmetrized 
moments of the rank and crank functions,
\begin{align*}
	N_k(n)
	&= 
		\sum_{m=-\infty}^{\infty}m^k N(m,n)
	,&
	\eta_{k}(n)
	&= 
		\sum_{m = -\infty}^{\infty}
		\binom{m+\lfloor\frac{k-1}{2}\rfloor}{k}N(m,n)
	,\\	
	M_k(n)
	&= 
		\sum_{m=-\infty}^{\infty}m^kM(m,n)
	,&
	\mu_{k}(n)
	&= 
		\sum_{m = -\infty}^{\infty}
		\binom{m+\lfloor\frac{k-1}{2}\rfloor}{k}M(m,n)
	.
\end{align*}
Studies of the ordinary moments of the rank and crank function began with
the work of Atkin and Garvan in \cite{AtkinGarvan1}, and Andrews introduced 
$\eta_{2k}$, the symmetrized moment of the rank, in \cite{Andrews4}.
Previous to Garvan's article, it was conjectured that
$M_{2k}(n)>N_{2k}(n)$ for all positive $k$ and $n$ (here only the even moments
are of interest as the odd moments are zero). By asymptotics this was known to
hold for sufficiently large $n$ for each $k$ \cite{BringmannMahlburgRhoades1}.
Among Garvan's results, we highlight three. 
The first is the following form of the generating function of 
$\mu_{2k}(n)-\eta_{2k}(n)$, 
\begin{align*}
	\sum_{n=1}^\infty
	\left(\mu_{2k}(n)-\eta_{2k}(n)\right)q^n	
	&=
	\sum_{n_k\ge\dotsb\ge n_1\ge 1}
	\frac{q^{n_1+\dotsb+n_k}}{\aprod{q^{n_1+1}}{\infty}(1-q^{n_1})^2\dotsm(1-q^{n_k})^2}
	,
\end{align*}
which clearly exhibits that $\mu_{2k}(n)\ge \eta_{2k}(n)$ for all $k$ and $n$, 
and additionally one can easily determine when the inequality is strict.
The second is a formula for writing the ordinary moments as a positive integer 
linear combination of the symmetrized moments, and in particular 
$M_{2k}(n)-N_{2k}(n) \ge \mu_{2}(n)-\eta_{2}(n)$.
The last is a family of weighted counts of the partitions of
$n$, $\hspt{}{k}{n}$, the higher order smallest parts function, 
such that the weighting is clearly non-negative and based
on the frequency of the parts of the partitions,
$\hspt{}{1}{n}=\spt{n}$, and
$\hspt{}{k}{n}=\mu_{2k}(n)-\eta_{2k}(n)$. Additionally Garvan established a large
number of congruences for $\hspt{}{k}{n}$.
The linchpin in establishing the generating function for 
$\mu_{2k}(n)-\eta_{2k}(n)$
was again a certain form of Bailey's Lemma applied to the Bailey pair
\begin{align*}
	\beta_n
	&=
	\frac{1}{\aprod{q}{n}}
	,&
	\alpha_n 
	&=
	\left\{\begin{array}{ll}
		1 												& \mbox{ if } n=0,
		\\
		(-1)^n (1+q^{n}) q^{\frac{n(3n-1)}{2}}  	&\mbox{ if } n\ge 1.
	\end{array}\right.
\end{align*}
Inspired by Garvan's results, in \cite{JenningsShaffer3}
the  second author carried out the same study of
differences and inequalities between rank and crank moments
related to overpartitions and 
partitions without repeated odd parts. The upshot was that one needed only
choose different Bailey pairs compared to Garvan's work.

It is now obvious what we are to do next. We are to consider the framework 
developed by Garvan in \cite{Garvan1}, but applied to all applicable Bailey 
pairs of Slater. Again this compares with Slater's original use of Bailey pairs. 
As we will see shortly, the requirements for 
our choice of Bailey pairs are just that $a=1$; $\alpha_0=\beta_0=1$; formulaically 
$\alpha_{-n}=\alpha_n$; and after multiplying by an infinite product, of our 
own choice, it is clear that $\beta_n$ has non-negative coefficients.
Our results will mirror that of Garvan's study of rank and crank moments.
For each Bailey pair considered, we will introduce a rank and crank-like 
function, obtain a generating function for the difference of symmetrized
moments which clearly exhibits non-negative coefficients,
deduce an inequality for the associated ordinary moments, and then give 
a weighted count of partitions that agrees with the difference of the
symmetrized moments.
To demonstrate that this can be applied to Bailey pairs past those in Slater's 
list, we also consider one Bailey pair from \cite{BowmannMclaughlinSills1}. 
Altogether we will give 28 instances of this process.

The rest of the article is organized as follows. In Section 2 we give our
definitions and main results, which are series identities and inequalities. 
In Section 3 we prove the series identities
and inequalities listed in Section 2. In Section 4 we prove the combinatorial
interpretation of the symmetrized rank and crank moment differences,
which justifies our definitions of higher order spt functions. In Section 5
we end with a few conjectures and remarks.

\section{Definitions and Statement of Main Results}

For our series identities and inequalities, we need a small number of general
identities, all of which are straightforward to prove. These identities have their
origins in a combination of classical works on the rank function
as well as \cite{Andrews4, Garvan1, JenningsShaffer3}, however here we state 
and prove them in generality. We combine the main identities into
the single following theorem. Given that the proofs primarily already exist in
the literature, we will find it takes far longer to state our results than to
prove them.

\begin{theorem}\label{TheoremMain}
Suppose $(\alpha,\beta)$ is a Bailey pair relative to $(1,q)$,
$\alpha_0=\beta_0=1$, and $\alpha_n=\alpha_{-n}$.
Here we note that by $\alpha_n=\alpha_{-n}$, we are treating $\alpha_n$ as a 
bilateral sequence by extending $\alpha_n$ to negative indices according to whatever 
general formula is given for $\alpha_n$.
Suppose
\begin{align*}
	R_X(z,q)
	:=	
	P_{X}(q)
	\left(
		1
		+
		\sum_{n = 1}^{\infty} \frac{\alpha_{n}q^n(1-z)(1-z^{-1})}{(1-zq^n)(1-z^{-1}q^n)}
	\right)	
	&=
	\sum_{n=0}^\infty \sum_{m=-\infty}^\infty
		N_{X}(m,n) z^mq^n
	,
\end{align*}
where $P_{X}(q)$ is a series in $q$,
and for $k$ a positive integer let
\begin{align*}
	N^{X}_{k}(n)
	&=
		\sum_{m = -\infty}^{\infty}
		m^kN_{X}(m,n)
	,&
	\eta^{X}_{k}(n)
	&= 
		\sum_{m = -\infty}^{\infty}
		\binom{m+\lfloor\frac{k-1}{2}\rfloor}{k}N_{X}(m,n)
	.
\end{align*}
Then
\begin{align}\label{EqMainTheoremSymmetrizedRankMoment}
	\sum_{n=1}^\infty \eta^X_{2k}(n) q^n
	&=
		-P_X(q)
		\sum_{n=1}^\infty
		\frac{\alpha_{n}q^{nk}}{(1-q^n)^{2k}}
	,
\end{align}	
and
\begin{align}\label{EqMainTheoremSymmetrizexRankCrankDifference}		
	\sum_{n=1}^\infty
	\hspt{X}{k}{n}q^n
	&:=
		P_X(q)\aprod{q}{\infty}
		\sum_{n=1}^\infty \mu_{2k}(n)q^n
		-
		\sum_{n=1}^\infty \eta^{X}_{2k}(n)q^n
	\nonumber\\
	&=
		P_X(q)
		\sum_{n_k\ge \dotsb \ge n_1\ge 1 }
		\frac{\aprod{q}{n_1}^2 \beta_{n_1} q^{n_1+\dotsb +n_k} }
		{(1-q^{n_1})^2\dotsm(1-q^{n_k})^2}	
	.
\end{align}
 
Furthermore, $N^X_{k}(n)$ and $\eta^{X}_{k}(n)$ are zero if $k$ is odd,
and the moments for even $k$ are related by the identities
\begin{align}
	\label{EqMainTheoremSymmetrizedRankEquality}
	\eta^{X}_{2k}(n)
	&= 
		\frac{1}{(2k)!}\sum_{m=-\infty}^{\infty}g_{k}(m)N_{X}(m,n)
	,\\
	\label{EqMainTheoremOrdinaryRankEquality}
	N^{X}_{2k}(n)
	&= 
		\sum_{j=1}^{k}(2j)!S^*(k,j)\eta^{X}_{2j}(n)
	,
\end{align}
where $g_k(x)=\prod_{j=0}^{k-1}(x^2-j^2)$, and the sequence $S^*(k,j)$ is 
defined recursively by $S^*(k+1,j)=S^*(k,j-1)+j^2S^*(k,j)$ and boundary 
conditions $S^*(1,1)=1$ and $S^*(k,j)=0 $ if  $j\leq 0$ or $j>k$.
\end{theorem}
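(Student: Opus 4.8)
The plan is to treat the four displayed conclusions as essentially independent consequences of a single generating-function manipulation, following the templates in \cite{Garvan1, JenningsShaffer3}. First I would record the basic symmetry observation: since $\alpha_n = \alpha_{-n}$, the defining expression for $R_X(z,q)$ is manifestly invariant under $z \mapsto z^{-1}$ — the summand $\frac{\alpha_n q^n (1-z)(1-z^{-1})}{(1-zq^n)(1-z^{-1}q^n)}$ is symmetric in $z \leftrightarrow z^{-1}$ term by term. Hence $N_X(m,n) = N_X(-m,n)$, which immediately forces $N^X_k(n) = 0$ and $\eta^X_k(n) = 0$ for odd $k$ (the summand is odd in $m$ against an even weight), disposing of the ``furthermore'' clause's first assertion.

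For \eqref{EqMainTheoremSymmetrizedRankMoment}, the key step is to apply the operator $\frac{1}{(2k)!}\left(z\frac{\partial}{\partial z}\right)$ iterated, or more cleanly to extract the symmetrized moment directly. I would use the standard identity that for a function symmetric under $z\mapsto z^{-1}$ written as $\sum_m c_m z^m$, one has $\sum_m \binom{m+\lfloor(k-1)/2\rfloor}{k} c_m = $ a specific residue/coefficient that, applied to the single building block $\frac{(1-z)(1-z^{-1})}{(1-zq^n)(1-z^{-1}q^n)}$, yields $-\frac{q^{n(k-1)}}{(1-q^n)^{2k}}$ (this is exactly the computation underlying Garvan's Theorem; the factor $P_X(q)$ and the extra $q^n$ from the summand combine to give $q^{nk}$). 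Since the symmetrized moment operator is linear and the $n=0$ term (the constant $1$) contributes nothing for $k\ge 1$, summing over $n\ge 1$ gives \eqref{EqMainTheoremSymmetrizedRankMoment}. The identity \eqref{EqMainTheoremSymmetrizexRankCrankDifference} then follows by first substituting the closed form just obtained for $\sum \eta^X_{2k}(n)q^n$, then substituting Garvan's generating function for $\sum \mu_{2k}(n)q^n$ (the nested sum over $n_k\ge\dots\ge n_1\ge 1$ displayed in the introduction, with the $\frac{1}{\aprod{q^{n_1+1}}{\infty}}$ factor), then using the Bailey pair relation $\beta_{n_1} = \sum_{k}\frac{\alpha_k}{\aqprod{q}{q}{n_1-k}\aqprod{q}{q}{n_1+k}}$ (relative to $(1,q)$) to rewrite $\aprod{q}{n_1}^2\beta_{n_1}$ in a way that matches the $\alpha_n$-sum; the telescoping/rearrangement here is the same one Garvan uses, just carried out abstractly. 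Multiplying through by $P_X(q)$ and by $\aprod{q}{\infty}$ on the $\mu_{2k}$ side (which cancels the $\aprod{q^{n_1+1}}{\infty}$ denominator against $\aprod{q}{n_1}$) produces the stated right-hand side.

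For the two moment-conversion identities \eqref{EqMainTheoremSymmetrizedRankEquality} and \eqref{EqMainTheoremOrdinaryRankEquality}, I would argue purely combinatorially at the level of the weights. For \eqref{EqMainTheoremSymmetrizedRankEquality}: using $N_X(m,n) = N_X(-m,n)$, pair up the $m$ and $-m$ terms in $\sum_m \binom{m+\lfloor(2k-1)/2\rfloor}{2k}N_X(m,n) = \sum_m\binom{m+k-1}{2k}N_X(m,n)$ and verify the polynomial identity $\binom{m+k-1}{2k} + \binom{-m+k-1}{2k} = \frac{1}{(2k)!}\prod_{j=0}^{k-1}(m^2 - j^2) = \frac{g_k(m)}{(2k)!}$ as an identity of polynomials in $m$ (both sides are even of degree $2k$; check them at $m = 0, 1, \dots, k-1$ where the product vanishes and the binomials are easy, or compare leading coefficients). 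For \eqref{EqMainTheoremOrdinaryRankEquality}: this is a pure change of basis between $\{m^{2k}\}$ and $\{g_j(m)\}_{j\le k}$ in the space of even polynomials, and the claim is that the transition coefficients are $(2j)!\,S^*(k,j)$ with $S^*$ the central factorial / second-kind-like numbers defined by the stated recursion; I would prove $m^{2k} = \sum_{j=1}^k S^*(k,j)\, m^2(m^2-1)\cdots(m^2-(j-1)^2)$ by induction on $k$ using the recursion $S^*(k+1,j) = S^*(k,j-1) + j^2 S^*(k,j)$ (multiply the $k$-th identity by $m^2$ and split $m^2\cdot\prod_{i<j}(m^2-i^2) = \prod_{i\le j}(m^2-i^2) + j^2\prod_{i<j}(m^2-i^2)$), then sum against $N_X(m,n)$ and fold in \eqref{EqMainTheoremSymmetrizedRankEquality}.

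The main obstacle is the bookkeeping in \eqref{EqMainTheoremSymmetrizexRankCrankDifference}: correctly tracking the interplay of $P_X(q)$, the product $\aprod{q}{\infty}$, and the Bailey pair relation so that the abstract version reduces to Garvan's concrete computation. Everything else is either a symmetry observation or an elementary polynomial identity; I expect the reader to accept, as the authors themselves note, that these proofs are shorter than the statements.
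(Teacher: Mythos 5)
Your outline follows essentially the same route as the paper's proof: the $z\leftrightarrow z^{-1}$ symmetry gives $N_X(m,n)=N_X(-m,n)$ and hence the vanishing of the odd moments; \eqref{EqMainTheoremSymmetrizedRankMoment} is the Andrews-style symmetrized-moment computation (the paper performs it via a bilateral rewriting and the operator $\tfrac{1}{(2k)!}\frac{\partial^{2k}}{\partial z^{2k}}\bigl[z^{k-1}R_X(z,q)\bigr]\big|_{z=1}$, while you extract the coefficients of the single Lambert-series block directly --- an equivalent computation); \eqref{EqMainTheoremSymmetrizexRankCrankDifference} is exactly Garvan's Theorem~3.3, which is already stated for arbitrary Bailey pairs relative to $(1,q)$ with $\alpha_0=\beta_0=1$ and which the paper simply cites together with his formula for $\sum_n\mu_{2k}(n)q^n$ (so your ``carry out the rearrangement abstractly'' is more work than needed, but amounts to the same thing); and \eqref{EqMainTheoremSymmetrizedRankEquality}--\eqref{EqMainTheoremOrdinaryRankEquality} are the verbatim Garvan Theorem~4.3 argument you describe, including the correct induction for $m^{2k}=\sum_j S^*(k,j)g_j(m)$.

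One concrete correction: the polynomial identity you state for \eqref{EqMainTheoremSymmetrizedRankEquality} is off by a factor of $2$. In fact
$\binom{m+k-1}{2k}+\binom{-m+k-1}{2k}
=\frac{2m\,(m+k-1)(m+k-2)\cdots(m-k+1)}{(2k)!}
=\frac{2\,g_k(m)}{(2k)!}$,
not $\frac{g_k(m)}{(2k)!}$; your own proposed checks would catch this (the leading coefficient of the left side is $2/(2k)!$, and at $m=k$ the left side equals $1$ while $g_k(k)/(2k)!=1/2$). As written, pairing the $\pm m$ terms with your identity would yield $\eta^{X}_{2k}(n)=\frac{1}{2(2k)!}\sum_m g_k(m)N_X(m,n)$, i.e.\ half of \eqref{EqMainTheoremSymmetrizedRankEquality}; with the corrected identity, summing over $m\ge 1$ and using that $g_k$ is even with $g_k(0)=0$ (and $\binom{k-1}{2k}=0$ for the $m=0$ term) recovers \eqref{EqMainTheoremSymmetrizedRankEquality} exactly, and the same factor of $2$ is what makes your direct derivation of \eqref{EqMainTheoremSymmetrizedRankMoment} close up, since there one needs $\sum_{m\ge1}\bigl[\binom{m+k-1}{2k}+\binom{m+k}{2k}\bigr]x^m=\frac{x^k(1+x)}{(1-x)^{2k+1}}$. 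A smaller point: the iterated operator $\bigl(z\frac{\partial}{\partial z}\bigr)^{2k}$ at $z=1$ produces the ordinary moments $N^X_{2k}$, not $\eta^X_{2k}$; the symmetrized moments require $\frac{1}{(2k)!}\frac{\partial^{2k}}{\partial z^{2k}}\bigl[z^{k-1}R_X(z,q)\bigr]\big|_{z=1}$, which is the operator the paper uses --- your ``extract directly'' alternative sidesteps this and is fine once the binomial identity above is corrected.
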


\begin{example*}
We first demonstrate the use of this theorem with a specific Bailey pair.
Consider the Bailey pair B(2) of \cite{Slater1},
\begin{align*} 
	\beta_n 
	&= 
		\frac{q^n}{(q)_n} 
	,&
	\alpha_n 
	&=
		\begin{cases} 
		1 								& n=0 
		,\\ 
		(-1)^nq^{\frac{3n(n-1)}{2}}(1+q^{3n}) 	& n\geq 1 
		.\\
	\end{cases}
\end{align*}
We note that using this formula with negative $n$ does give that 
$\alpha_{n} = \alpha_{-n}$.
Looking to Theorem \ref{TheoremMain}, if $\hspt{X}{k}{n}$ is to be a
non-negative integer, then we should choose 
$P_X(q)=\frac{1}{\aprod{q}{\infty}}$. From this we now know we should define a
rank-like function by
\begin{align*}
	R_{B2}(z,q)
	&= 
		\sum_{n=0}^\infty \sum_{m=-\infty}^\infty	
		N_{B2}(m,n)z^mq^n
	=
		\frac{1}{\aqprod{q}{q}{\infty}}
		\left(
			1
			+
			\sum\limits_{n=1}^\infty
			\frac{(1-z)(1-z^{-1})(-1)^{n}q^{\frac{n(3n-1)}{2}}(1+q^{3n})}{(1-zq^n)(1-z^{-1}q^n)}
		\right)
	,
\end{align*}	
and we have the associated ordinary and symmetrized moments given by
\begin{align*}
	N^{B2}_{k}(n)
	&=
		\sum_{m = -\infty}^{\infty}
		m^kN_{B2}(m,n)
	,&
	\eta^{B2}_{k}(n)
	&= 
		\sum_{m = -\infty}^{\infty}
		\binom{m+\lfloor\frac{k-1}{2}\rfloor}{k}N_{B2}(m,n)
	.
\end{align*}
But then 
\begin{align*}
	\sum_{n=1}\left(\mu_{2k}(n)-\eta^{B2}_{2k}(n)\right)q^n
	&=
		\sum_{n_k\ge \dotsb \ge n_1\ge 1 }
		\frac{q^{2n_1+n_2+\dotsb +n_k} }
		{\aprod{q^{n_1+1}}{\infty}(1-q^{n_1})^2\dotsm(1-q^{n_k})^2}	
	,
\end{align*}
and clearly $\mu_{2k}(n)\ge \eta^{B2}_{2k}(n)$ for positive $k$ and $n$.
Additionally by taking $k=1$ and examining the $n_1=1$ summand,
\begin{align*}
	\frac{q^2}{\aprod{q^2}{\infty}(1-q)^2}
	&=
		q^2 + 2q^3+ 4q^4 + 7q^5 + 12q^6 + 19q^7 + 30q^8 + \dotsb
,		   
\end{align*}
we see that $\mu_{2}(n)> \eta^{B2}_{2}(n)$ for $n\ge 2$.
To obtain an inequality for the ordinary moments, we make the following
observation. The $S^*(k,j)$ are non-negative integers and $S^*(k,1)$ is
positive for $k\ge 1$. In particular, we then have that
\begin{align*} 
	M_{2k}(n)-N^{B2}_{2k}(n) 
	&=
		\sum_{j=1}^{k}(2j)!S^*(k,j)( \mu_{2j}(n)-\eta^{B2}_{2j}(n))
	\ge 
		\mu_{2}(n)-\eta^{B2}_{2}(n)
.
\end{align*}
Thus $M_{2k}(n)>N^{B2}_{2k}(n)$ for all positive $k$ and $n\ge 2$.
Furthermore, we can ask what is the non-negative integer
$\hspt{B2}{k}{n}:= \mu_{2k}(n)- \eta^{B2}_{2k}(n)$ counting in terms of
partitions? This is the question we address in Section 4.
\end{example*}

We now repeat this process with the many relevant Bailey pairs from
\cite{Slater1,Slater2}, and tabulate our results in a corollary.
In the cases where a Bailey pair would have fractional powers of $q$, we 
replace $q$ with the appropriate power of $q$.
It is worth noting that the Bailey pairs $B1$, 
$E1$, and 
the unlabeled Bailey pair on page 468 of \cite{Slater1} with $\beta_n=\frac{1}{(-q^{1/2})_{n}\aprod{q}{n}}$, 
(which is also $F1$ with $q^{1/2}\mapsto-q^{1/2}$)
correspond respectively to
the ordinary rank studied by Garvan in \cite{Garvan1} 
and the Dyson rank for overpartitions 
and the M2-rank for partitions without repeated odd parts
studied by the second author studied in \cite{JenningsShaffer3}.
The M2-rank for overpartitions corresponds to a Bailey pair from a 
specialization of a finite form
of the Jacobi triple product identity. As such, 
we omit these Bailey pairs from our consideration.
In labeling our Bailey pairs, we use the existing label in the literature when
it exists, otherwise we label the Bailey pairs relative to $(1,q)$ as 
$X1$, $X2$, $X3$, $X4$, $X5$, $X6$ and
the Bailey pairs relative to $(1,q^2)$ as
$Y1$, $Y2$, $Y3$, $Y4$. This labeling is not meant to carry any additional
semantic value.

To state our corollary, we first introduce the relevant crank-like functions
that will appear. In the cases of a crank that has appeared before in the 
literature, we follow the existing naming conventions.
We let
\begin{align*}
	C(z,q)
	&=
		\frac{\aprod{q}{\infty}}{\aprod{zq}{\infty}\aprod{z^{-1}q}{\infty}}	
		=
		\sum_{n=0}^\infty\sum_{m=-\infty}^\infty
		M(m,n)z^mq^n
	,\\		
	\overline{C}(z,q)
	&=	
		\frac{\aprod{-q}{\infty}\aprod{q}{\infty}}{\aprod{zq}{\infty}\aprod{z^{-1}q}{\infty}}	
		=
		\sum_{n=0}^\infty\sum_{m=-\infty}^\infty
		\overline{M}(m,n)z^mq^n
	,\\
	C^J(z,q)	
	&=	
		\frac{\aprod{q}{\infty}}{\aqprod{q^3}{q^3}{\infty}\aprod{zq}{\infty}\aprod{z^{-1}q}{\infty}}	
		=
		\sum_{n=0}^\infty\sum_{m=-\infty}^\infty
		M^J(m,n)z^mq^n
	,\\
	C^{\X{40}}(z,q)
	&=
		\frac{\aprod{q}{\infty}}
		{\aqprod{q^2}{q^2}{\infty}\aprod{zq}{\infty}\aprod{z^{-1}q}{\infty}}	
		=
		\sum_{n=0}^\infty\sum_{m=-\infty}^\infty
		M^{\X{40}}(m,n)z^mq^n
	,\\
	C^F(z,q)	
	&=	
		\frac{\aqprod{q^2}{q^2}{\infty}}{\aqprod{zq^2}{q^2}{\infty}\aqprod{z^{-1}q^2}{q^2}{\infty}}	
		=
		\sum_{n=0}^\infty\sum_{m=-\infty}^\infty
		M^F(m,n)z^mq^n
	,\\
	C^G(z,q)
	&=
		\frac{\aqprod{-q}{q^2}{\infty}}
		{\aqprod{zq^2}{q^2}{\infty}\aqprod{z^{-1}q^2}{q^2}{\infty}}	
		=
		\sum_{n=0}^\infty\sum_{m=-\infty}^\infty
		M^G(m,n)z^mq^n
	,\\
	C^Y(z,q)
	&=
		\frac{1}
		{\aqprod{zq^2}{q^2}{\infty}\aqprod{z^{-1}q^2}{q^2}{\infty}}	
		=
		\sum_{n=0}^\infty\sum_{m=-\infty}^\infty
		M^Y(m,n)z^mq^n
	,\\
	C^{L2}(z,q)
	&=
		\frac{\aprod{-q}{\infty}\aqprod{q^4}{q^4}{\infty}}
		{\aqprod{zq^4}{q^4}{\infty}\aqprod{z^{-1}q^4}{q^4}{\infty}}	
		=
		\sum_{n=0}^\infty\sum_{m=-\infty}^\infty
		M^{L2}(m,n)z^mq^n		
	.
\end{align*}
We note that $C(z,q)$ is the ordinary crank of partitions, the moments of 
which Garvan studied in \cite{Garvan1}, and 
the function $\overline{C}(z,q)$ is known as the (first residual) crank of 
overpartitions \cite{BringmannLovejoyOsburn1}. 
Since all of
these functions are directly related to the ordinary crank, 
upon defining the moments in the obvious way, which
we omit, based on \cite{Garvan1} we have the following,
\begin{align*}
	\sum_{n=1}^\infty \mu_{2k}(n)q^n 
	&=
		\frac{1}{(q)_{\infty}}
		\sum\limits_{n=1}^{\infty} \frac{(-1)^{n+1}q^{\frac{n(n-1)}{2}+kn}(1+q^n)}{(1-q^n)^{2k}} 
	,\\
	\sum_{n=1}^\infty \overline{\mu}_{2k}(n)q^n 
	&=
		\frac{(-q)_{\infty}}{(q)_\infty}
		\sum_{n=1}^{\infty} 
		\frac{(-1)^{n+1}q^{\frac{n(n-1)}{2}+kn}(1+q^n)}{(1-q^n)^{2k}} 
	,\\
	\sum_{n=1}^\infty \mu^{J}_{2k}(n)q^n 
	&=
		\frac{1}{(q^3;q^3)_{\infty}(q)_\infty}
		\sum_{n=1}^{\infty} 
		\frac{(-1)^{n+1}q^{\frac{n(n-1)}{2}+kn}(1+q^n)}{(1-q^n)^{2k}}
	,\\
	\sum_{n=1}^\infty \mu^{\X{40}}_{2k}(n)q^n 
	&= 
		\frac{1}{(q)_\infty(q^2;q^2)_{\infty}}
		\sum\limits_{n=1}^{\infty} \frac{(-1)^{n+1}q^{\frac{n(n-1)}{2}+kn}(1+q^n)}{(1-q^n)^{2k}} 
	,\\
	\sum_{n=1}^\infty \mu^{F}_{2k}(n)q^n 
	&=
		\frac{1}{(q^2;q^2)_{\infty}}
		\sum_{n=1}^{\infty} 
		\frac{(-1)^{n+1}q^{n(n-1)+2kn}(1+q^{2n})}{(1-q^{2n})^{2k}}
	,\\
	\sum_{n=1}^\infty \mu^{G}_{2k}(n)q^n 
	&= 
		\frac{(-q;q^2)_\infty}{(q^2;q^2)^2_\infty}
		\sum\limits_{n=1}^\infty \frac{(-1)^{n+1}q^{n(n-1)+2kn}(1+q^{2n})}{(1-q^{2n})^{2k}}
	,\\
	\sum_{n=1}^\infty \mu^{Y}_{2k}(n)q^n 
	&= 
		\frac{1}{(q^2;q^2)^2_\infty}
		\sum\limits_{n=1}^\infty \frac{(-1)^{n+1}q^{n(n-1)+2kn}(1+q^{2n})}{(1-q^{2n})^{2k}}
	,\\
	\sum_{n=1}^\infty \mu^{L2}_{2k}(n)q^n 
	&=
		\frac{(-q)_\infty}{(q^4;q^4)_\infty} 
		\sum_{n=1}^\infty \frac{(-1)^{n+1}q^{2n(n-1)+4kn}(1+q^{4n})}{(1-q^{4n})^{2k}}
	.
\end{align*}
We find that the ordinary and symmetrized moments satisfy the same relation
as for the rank-like functions. In particular,
\begin{align*} 
	M^X_{2k}(n)
	&=
		\sum_{j=1}^{k}(2j)!S^*(k,j)\mu^X_{2j}(n).
\end{align*}
We note that if one wishes to actually compute 
$\mu^{X}_{2k}(n)-\eta^{X}_{2k}(n)$ numerically, one should do so with the above 
representation for $\mu^{X}_{2k}(n)$ and that of $\eta^{X}_{2k}(n)$
in (\ref{EqMainTheoremSymmetrizedRankMoment}), rather than
by (\ref{EqMainTheoremSymmetrizexRankCrankDifference}).

\begin{corollary}\label{CorollarySeriesIdentities}
(1) Using the Bailey pair A(1) from \cite{Slater1}, relative to $(1,q)$,
\begin{align*} 
	\beta_n 
	&= 
		\frac{1}{(q)_{2n}}
	,& 
 	\alpha_n 
 	&=
	\begin{cases} 
		1 						& n=0, 
		\\ 
		-q^{6k^2-5k+1} 			& n= 3k-1, 
		\\
		q^{6k^2-k}+q^{6k^2+k} 	&n= 3k, 
		\\
		-q^{6k^2+5k+1}  		& n=3k+1,
	\end{cases} 
\end{align*}
we define
\begin{align*}
	R_{A1}(z,q)
	&= 
		\frac{1}{\aprod{q}{\infty}}
		\left( 
			1
			-
			\sum\limits_{n=1}^\infty 
			\frac{(1-z)(1-z^{-1})q^{6n^2 - 2n}}{(1-zq^{3n-1})(1-z^{-1}q^{3n-1})} 
			-
			\sum\limits_{n=0}^\infty 
			\frac{(1-z)(1-z^{-1})q^{6n^2+8n + 2}}{(1-zq^{3n+1})(1-z^{-1}q^{3n+1})}   
			\right.\\&\left.\quad
			+
			\sum\limits_{n=1}^\infty 
			\frac{(1-z)(1-z^{-1})q^{6n^2+2n}(1+q^{2n}) }{(1-zq^{3n})(1-z^{-1}q^{3n})} 
		\right)
	,
\end{align*}
and obtain
\begin{align*}
	&\sum\limits_{n=1}^{\infty} \eta^{A1}_{2k}(n)q^{n} 
	= 
		\frac{1}{(q)_{\infty}}
		\left(
			\sum\limits_{n=1}^\infty \frac{q^{6n^2-5n+1+(3n-1)k}}{(1-q^{3n-1})^{2k}} 
			+
			\sum\limits_{n=0}^\infty \frac{q^{6n^2+5n+1+(3n+1)k}}{(1-q^{3n+1})^{2k}} 
			-
			\sum\limits_{n=1}^\infty \frac{q^{6n^2-n+3nk}(1+q^{2n})}{(1-q^{3n})^{2k}} 
		\right)
	,\\ 
	&\sum_{n = 1}^{\infty} \hspt{A1}{k}{n} q^{n} 
	:= 
		\sum_{n = 1}^{\infty}\left(\mu_{2k}(n)-\eta_{2k}^{A1}(n)\right)q^{n} 
		 = 
		 \sum\limits_{n_k \geq \cdots \geq n_1 \geq 1} 
		 \frac{q^{n_1+\cdots{}+n_k}} 
		 {(q^{n_1+1})_{n_1}(q^{n_1+1})_{\infty}(1-q^{n_k})^{2}\cdots{}(1-q^{n_1})^2}
	. 
\end{align*}

(2) Using the Bailey pair A(3) from \cite{Slater1}, relative to $(1,q)$,
\begin{align*} 
	\beta_n 
	&= 
		\frac{q^n}{(q)_{2n}} 
	,& 
	\alpha_n 
	&=
		\begin{cases} 
			1 							& n=0, 
			\\ 
			-q^{6k^2-2k} 				& n= 3k-1, 
			\\
			q^{6k^2-2k}+q^{6k^2+2k} 	& n= 3k, 
			\\
			-q^{6k^2+2k}  				& n=3k+1,
			\\
		\end{cases} 
\end{align*}
we define
\begin{align*}
	R_{A3}(z,q)
	&= 
		\frac{1}{\aprod{q}{\infty}}
		\Bigg(
			1
			-
			\sum\limits_{n=1}^\infty 
			\frac{(1-z)(1-z^{-1})q^{6n^2+n-1}}{(1-zq^{3n-1})(1-z^{-1}q^{3n-1})} 
			-
			\sum\limits_{n=0}^\infty 
			\frac{(1-z)(1-z^{-1})q^{6n^2+5n+1}}{(1-zq^{3n+1})(1-z^{-1}q^{3n+1})}   
			\\&\quad
			+
			\sum\limits_{n=1}^\infty 
			\frac{(1-z)(1-z^{-1})q^{6n^2+n}(1+q^{4n})}{(1-zq^{3n})(1-z^{-1}q^{3n})} 
		\Bigg)  
	,
\end{align*}
and obtain
\begin{align*}
	&\sum\limits_{n=1}^\infty \eta^{A3}_{2k}(n)q^n 
	=  
		\frac{1}{(q)_{\infty}}
		\left(
			\sum\limits_{n=1}^\infty \frac{q^{6n^2-2n+(3n-1)k}}{(1-q^{3n-1})^{2k}} 
			+
			\sum\limits_{n=0}^\infty \frac{q^{6n^2+2n+(3n+1)k}}{(1-q^{3n+1})^{2k}}   
			-
			\sum\limits_{n=1}^\infty \frac{q^{6n^2-2n+3nk}(1+q^{4n})}{(1-q^{3n})^{2k}} 
		\right)
	,\\
	&\sum_{n = 1}^{\infty} \hspt{A3}{k}{n}q^{n} 
	:= 
		\sum_{n = 1}^{\infty}\left(\mu_{2k}(n) - \eta_{2k}^{A3}(n)\right)q^{n} 
		=
		\sum\limits_{n_k \geq \dotsc \geq n_1 \geq 1} 
		\frac{q^{2n_1+n_2+\dotsb+n_k}} 
		{(q^{n_1+1})_{n_1}(q^{n_1+1})_{\infty}(1-q^{n_k})^2\dotsm(1-q^{n_1})^2} 
	.
\end{align*}
    
(3) Using the Bailey pair A(5) from \cite{Slater1}, relative to $(1,q)$,
\begin{align*} 
	\beta_n 
	&= 
		\frac{q^{n^2}}{(q)_{2n}} 
	,& 
	\alpha_n 
	&=
		\begin{cases} 
		1 						& n=0 
		,\\ 
		-q^{3k^2-k} 			& n= 3k-1 
		,\\
		q^{3k^2-k}+q^{3k^2+k} 	& n= 3k 
		,\\
		-q^{3k^2+k}  			& n=3k+1
		,\\
		\end{cases} 
\end{align*}
we define
\begin{align*}
	R_{A5}(z,q)
	&= 
		\frac{1}{\aprod{q}{\infty}}
		\Bigg( 
			1
			-
			\sum\limits_{n=1}^\infty 
			\frac{(1-z)(1-z^{-1})q^{3n^2+2n-1}}{(1-zq^{3n-1})(1-z^{-1}q^{3n-1})} 
			-
			\sum\limits_{n=0}^\infty 
			\frac{(1-z)(1-z^{-1})q^{3n^2+4n+1}}{(1-zq^{3n+1})(1-z^{-1}q^{3n+1})}   
			\\&\quad
			+
			\sum\limits_{n=1}^\infty 
			\frac{(1-z)(1-z^{-1})q^{3n^2+2n}(1+q^{2n})}{(1-zq^{3n})(1-z^{-1}q^{3n})} 
		\Bigg)  
	,	
\end{align*}
and obtain
\begin{align*}
	&\sum\limits_{n=1}^\infty \eta^{A5}_{2k}(n)q^n 
	=  
		\frac{1}{(q)_{\infty}}
		\left(
			\sum\limits_{n=1}^\infty \frac{q^{3n^2-n+(3n-1)k}}{(1-q^{3n-1})^{2k}} 
			+
			\sum\limits_{n=0}^\infty \frac{q^{3n^2+n+(3n+1)k}}{(1-q^{3n+1})^{2k}}   
			-
			\sum\limits_{n=1}^\infty \frac{q^{3n^2-n+3nk}(1+q^{2n})}{(1-q^{3n})^{2k}} 
		\right)
	,\\
	&
	\sum_{n = 1}^{\infty} \hspt{A5}{k}{n}q^{n} 
	:= 
		\sum_{n = 1}^{\infty}\left(\mu_{2k}(n)-\eta_{2k}^{A5}(n)\right)q^{n} 
	=
		\sum\limits_{n_k \geq  \dotsb \geq n_1 \geq 1} 
		\frac{q^{n_1^2+n_1+n_2+\dotsb+n_k}} 
		{(q^{n_1+1})_{n_1}(q^{n_1+1})_{\infty}(1-q^{n_k})^2\dotsm(1-q^{n_1})^2} 
	.
\end{align*}
    
(4) Using the Bailey pair A(7) from \cite{Slater1}, relative to $(1,q)$,
\begin{align*} 
	\beta_n 
	&= 
		\frac{q^{n^2-n}}{(q)_{2n}} 
	,&
 	\alpha_n 
 	&=
		\begin{cases} 
		1 							& n=0, \\ 
		-q^{3k^2-4k+1} 				& n=3k-1, \\
		q^{3k^2-2k}+q^{3k^2+2k} 	& n=3k, \\
		-q^{3k^2+4k+1}  			& n=3k+1,\\
		\end{cases}
\end{align*}
we define
\begin{align*}
	R_{A7}(z,q)
	&= 
		\frac{1}{\aprod{q}{\infty}}
		\Bigg( 
			1
			-\sum\limits_{n=1}^\infty 
			\frac{(1-z)(1-z^{-1})q^{3n^2-n}}{(1-zq^{3n-1})(1-z^{-1}q^{3n-1})} 
			-
			\sum\limits_{n=0}^\infty 
			\frac{(1-z)(1-z^{-1})q^{3n^2+7n+2}}{(1-zq^{3n+1})(1-z^{-1}q^{3n+1})}   
			\\&\quad
			+
			\sum\limits_{n=1}^\infty 
			\frac{(1-z)(1-z^{-1})q^{3n^2+n}(1+q^{4n})}{(1-zq^{3n})(1-z^{-1}q^{3n})} 
		\Bigg)  
	,	
\end{align*}
and obtain
\begin{align*}
	&\sum\limits_{n=1}^\infty \eta^{A7}_{2k}(n)q^n 
	= 
		\frac{1}{(q)_{\infty}}
		\left(
			\sum\limits_{n=1}^\infty \frac{q^{3n^2-4n+1+(3n-1)k}}{(1-q^{3n-1})^{2k}} 
			+
			\sum\limits_{n=0}^\infty \frac{q^{3n^2+4n+1+(3n+1)k}}{(1-q^{3n+1})^{2k}}   
			-
			\sum\limits_{n=1}^\infty \frac{q^{3n^2-2n+3nk}(1+q^{4n})}{(1-q^{3n})^{2k}} 
		\right)
	,\\
	&
	\sum_{n = 1}^{\infty} \hspt{A7}{k}{n}q^{n} 
	:= 
		\sum_{n = 1}^{\infty}\left(\mu_{2k}(n)-\eta_{2k}^{A7}(n)\right)q^{n} 
	= 
		\sum\limits_{n_k \geq \dotsb \geq n_1 \geq 1} 
		\frac{q^{n_1^2+n_2+\dotsb+n_k}} 
		{(q^{n_1+1})_{n_1}(q^{n_1+1})_{\infty}(1-q^{n_k})^2\dotsm (1-q^{n_1})^2}
	.
\end{align*}
    
(5) Using the Bailey pair B(2) from \cite{Slater1}, relative to $(1,q)$,
\begin{align*} 
	\beta_n &= \frac{q^n}{(q)_n} 
	,&
	\alpha_n 
	&=
		\begin{cases} 
		1 								& n=0 
		,\\ 
		(-1)^nq^{\frac{3n(n-1)}{2}}(1+q^{3n}) 	& n\geq 1 
		,\\
	\end{cases}
\end{align*}
we define
\begin{align*}
	R_{B2}(z,q)
	&= 
		\frac{1}{\aprod{q}{\infty}}
		\Bigg(
			1
			+
			\sum\limits_{n=1}^\infty
			\frac{(1-z)(1-z^{-1})(-1)^{n}q^{\frac{n(3n-1)}{2}}(1+q^{3n})}{(1-zq^n)(1-z^{-1}q^n)}
		\Bigg)
	,	
\end{align*}
and obtain
\begin{align*}
	&\sum\limits_{n=1}^\infty \eta^{B2}_{2k}(n)q^n 
	=
		\frac{1}{(q)_\infty}
		\sum_{n=1}^\infty 
		\frac{(-1)^{n+1}q^{\frac{3n(n-1)}{2}+nk}(1+q^{3n})}{(1-q^n)^{2k}}
	,\\
	&
	\sum_{n = 1}^{\infty} \hspt{B2}{k}{n}q^{n} 
	:= 
		\sum_{n = 1}^{\infty}\left(\mu_{2k}(n) -\eta_{2k}^{B2}(n)\right)q^{n} 
	= 
		\sum\limits_{n_k \geq \dotsb \geq n_1 \geq 1} 
		\frac{q^{2n_1+n_2+\dotsb+n_k}} 
		{(q^{n_1+1})_{\infty}(1-q^{n_k})^2\dotsm (1-q^{n_1})^2} 
	.
\end{align*}

(6) Using the Bailey pair C(1) from \cite{Slater1}, 
which is also L(6) from \cite{Slater2},
relative to $(1,q)$,
\begin{align*}
	\beta_{n} 
	&= 
		\frac{1}{\aqprod{q}{q^2}{n} \aprod{q}{n}} 
	,&
	\alpha_{n}
	&=
		\begin{cases} 
		1 									& n=0 
		,\\ 
		(-1)^{k}q^{3k^2 - k}(1 + q^{2k}) 	& n = 2k 
		,\\
		0 									& n = 2k + 1
		,
		\end{cases}
\end{align*}
we define
\begin{align*}
	R_{C1}(z,q) 
	&= 
		\frac{1}{\aprod{q}{\infty}}
		\Bigg(
			1 
			+ 
			\sum_{n = 1}^{\infty}
			\frac{(1-z)(1-z^{-1})(-1)^{n}q^{3n^2 + n}(1 + q^{2n})}{(1-zq^{2n})(1-z^{-1}q^{2n})}
		\Bigg)
	,
\end{align*}
and obtain
\begin{align*}	
	\sum_{n = 1}^{\infty}\eta_{2k}^{C1}(n)q^n 
	&= 
		\frac{1}{(q)_\infty}
		\sum_{n = 1}^{\infty}\frac{(-1)^{n+1}q^{3n^2 - n + 2nk}(1 + q^{2n})}{(1-q^{2n})^{2k}}
	,\\
	\sum_{n = 1}^{\infty} \hspt{C1}{k}{n}q^{n} 
	&:= 
		\sum_{n = 1}^{\infty}\left(\mu_{2k}(n)-\eta_{2k}^{C1}(n)\right)q^{n} 
    =  
    	\sum_{n_{k} \geq \dotsb \geq n_{1} \geq 1}
    	\frac{q^{n_{1} + \dotsb + n_{k}}}
    	{(q;q^2)_{n_{1}} (q^{n_{1} + 1})_{\infty} (1-q^{n_{k}})^{2}\dotsm (1-q^{n_{1}})^{2}}
	.
\end{align*}

(7) Using the Bailey pair C(2) from \cite{Slater1}, relative to $(1,q)$,
\begin{align*}
	\beta_{n} 
	&= 
		\frac{q^{n}}{\aqprod{q}{q^2}{n} \aprod{q}{n}} 
	,&
	\alpha_{n}
	&=
		\begin{cases} 
		1 										& n=0 
		,\\ 
		(-1)^{k}q^{3k^2 - k}(1 + q^{2k}) 		& n = 2k 
		,\\
		(-1)^{k+1}q^{3k^2 + k}(1 - q^{4k + 2}) 	& n = 2k + 1
		,
		\end{cases}
\end{align*}
we define
\begin{align*}
	R_{C2}(z,q)
	&= 
		\frac{1}{\aprod{q}{\infty}}
		\Bigg(
			1
			+
			\sum_{n=1}^{\infty}
			\frac{(1-z)(1-z^{-1})(-1)^{n}q^{3n^2 + n}(1 + q^{2n})}{(1-zq^{2n})(1-z^{-1}q^{2n})} 
			\\&\quad 
   			+
   			\sum_{n = 0}^{\infty}
   			\frac{(1-z)(1-z^{-1})(-1)^{n+1}q^{3n^2 + 3n + 1}(1 - q^{4n + 2})}{(1-zq^{2n+1})(1-z^{-1}q^{2n+1})} 
   		\Bigg)
	,
\end{align*}
and obtain
\begin{align*}
	\sum_{n = 1}^{\infty}\eta_{2k}^{C2}(n)q^n 
	&= 
		\frac{1}{(q)_\infty}
		\left(
			\sum_{n = 1}^{\infty}\frac{(-1)^{n+1}q^{3n^2 - n + 2nk}(1 + q^{2n})}{(1-q^{2n})^{2k}} 
			+ 
			\sum_{n = 0}^{\infty}\frac{(-1)^{n}q^{3n^2 + n + (2n+1)k}(1 - q^{4n + 2})}{(1-q^{2n + 1})^{2k}}
	\right)
	,\\
	\sum_{n = 1}^{\infty} \hspt{C2}{k}{n}q^{n} 
	&:= 
		\sum_{n = 1}^{\infty}\left(\mu_{2k}(n)-\eta_{2k}^{C2}(n)\right)q^{n} 
    = 
    	\sum_{n_{k} \geq \dotsb \geq n_{1} \geq 1}
    	\frac{q^{2n_{1} + n_{2} + \dotsb + n_{k}}}
    	{(q;q^2)_{n_{1}} (q^{n_{1} + 1})_{\infty}  (1-q^{n_{k}})^{2}\dotsm (1-q^{n_{1}})^{2}}
	.
\end{align*}

(8) Using the Bailey pair C(5) from \cite{Slater1}, 
which is also L(4) from \cite{Slater1},
relative to $(1,q)$,
\begin{align*}
	\beta_{n} 
	&= 
		\frac{q^{\frac{n(n-1)}{2}}}{\aqprod{q}{q^2}{n} \aprod{q}{n}} 
	,&
	\alpha_{n}
	&=
		\begin{cases} 
		1 									& n=0 
		,\\ 
		(-1)^{k}q^{k^2 - k}(1 + q^{2k}) 	& n = 2k
		,\\
		0 									& n = 2k + 1
		,
		\end{cases}
\end{align*}
we define
\begin{align*}
	R_{C5}(z,q)
	&= 
		\frac{1}{\aprod{q}{\infty}} 
		\Bigg(
			1
			+
			\sum_{n =1}^{\infty}
			\frac{(1-z)(1-z^{-1})(-1)^{n}q^{n^2+ n}(1 + q^{2n})}{(1-zq^{2n})(1-z^{-1}q^{2n})} 
		\Bigg) 
	,
\end{align*}
and obtain
\begin{align*}
	\sum_{n = 1}^{\infty}\eta_{2k}^{C5}(n)q^n 
	&= 
		\frac{1}{(q)_\infty}
		\sum_{n = 1}^{\infty}\frac{(-1)^{n+1}q^{n^2 - n + 2nk}(1 + q^{2n})}{(1-q^{2n})^{2k}}
	,\\
	\sum_{n = 1}^{\infty} \hspt{C5}{k}{n}q^{n} 
	&:= 
		\sum_{n = 1}^{\infty}\left(\mu_{2k}(n)-\eta_{2k}^{C5}(n)\right)q^{n} 
    = 
    	\sum_{n_{k} \geq  \dotsb \geq n_{1} \geq 1}
    	\frac{q^{ \frac{n_1(n_1 + 1)}{2} + n_{2} + \dotsb + n_{k}}}
    	{(q;q^2)_{n_{1}} (q^{n_{1} + 1})_{\infty}  (1-q^{n_{k}})^{2}\dotsm (1-q^{n_{1}})^{2}}
	.
\end{align*}

(9) Using the Bailey pair L(5), upon correcting the formula for $\beta_n$, from \cite{Slater2}, 
which also appears as the first entry in the second table of page 468 of \cite{Slater1},
relative to $(1,q)$,
\begin{align*}
	\beta_{n} 
	&= 
		\frac{(-1)_n}{(q)_{n}(q;q^2)_n}
	,& 
	\alpha_{n} 
	&=
		\begin{cases} 
		1 						& n=0
		,\\ 
		q^{\frac{n(n-1)}{2}}(1+q^n) 	& n \geq 1
		,		
		\end{cases}
\end{align*}
we define
\begin{align*}      
	R_{L5}(z,q)
	&=
		\frac{1}{\aprod{q}{\infty}}
		\Bigg(
			1 
			+ 
			\sum_{n = 1}^{\infty}\frac{(1-z)(1-z^{-1})q^{n(n+1)/2}(1+q^n)}{(1-zq^n)(1-z^{-1}q^n)}
		\Bigg)
	,
\end{align*}
and obtain
\begin{align*}
	&\sum\limits_{n=1}^\infty \eta^{L5}_{2k}(n)q^n 
	=
		\frac{-1}{(q)_\infty}
		\sum_{n=1}^\infty \frac{q^{\frac{n(n-1)}{2}+nk}(1+q^n)}{(1-q^{n})^{2k}} 
	,\\
	&
	\sum_{n = 1}^{\infty} \hspt{L5}{k}{n}q^{n} 
	:= 
		\sum_{n = 1}^{\infty}\left(\mu_{2k}(n) - \eta_{2k}^{L5}(n)\right)q^{n} 
	= 
		\sum\limits_{n_k \geq \dotsb \geq n_1 \geq 1} 
		\frac{(-1)_{n_1}q^{n_1+\dotsb +n_k}} 
		{(q;q^2)_{n_1} (q^{n_1+1})_{\infty}(1-q^{n_k})^2\dotsm (1-q^{n_1})^2} 
	.
\end{align*}

(10) Using the Bailey pair in the seventh entry in the table on page 470 of \cite{Slater1}, 
relative to $(1,q)$,
\begin{align*}
	\beta_{n}
	&= 
		\frac{\aqprod{-1}{q^2}{n}}{\aprod{q}{2n}} 
	,&
	\alpha_{n}
	&=
		\begin{cases}
		1 									& n = 0 
		,\\
		(-1)^{k}q^{2k^2 - k}(1 + q^{2k}) 	& n = 2k 
		,\\
		(-1)^{k}q^{2k^2 + k}(1 - q^{2k+1}) 	& n = 2k + 1
		,
		\end{cases}
\end{align*}
we define
\begin{align*}
	R_{\X{38}}(z,q) 
	&= 
		\frac{1}{\aprod{q}{\infty}} 
		\Bigg(
			1 
			+ 
			\sum_{n = 1}^{\infty}
			\frac{(1-z)(1-z^{-1})(-1)^{n}q^{2n^2 + n}(1 + q^{2n})}{(1 - zq^{2n})(1-z^{-1}q^{2n})} 
			\\&\quad
			+
			\sum_{n = 0}^{\infty}
			\frac{(1-z)(1-z^{-1})(-1)^{n}q^{2n^2 +3n+1}(1 - q^{2n+1})}{(1 - zq^{2n+1})(1 - z^{-1}q^{2n+1})}
		\Bigg)
	,
\end{align*}
and obtain
\begin{align*}
	&\sum_{n = 1}^{\infty}\eta_{2k}^{\X{38}}(n)q^n 
	= 
		\frac{1}{(q)_{\infty}} 
		\left(
			\sum_{n = 1}^{\infty} \frac{(-1)^{n+1}q^{2n^2 - n + 2nk}(1 + q^{2n})}{(1-q^{2n})^{2k}} 
			+
			\sum_{n = 0}^{\infty} \frac{(-1)^{n+1}q^{2n^2 + n + (2n+1)k}(1 - q^{2n + 1})}{(1 - q^{2n + 1})^{2k}}
		\right)
	,\\
	&\sum_{n = 1}^{\infty} \hspt{\X{38}}{k}{n}q^{n} 
	:= 
		\sum_{n = 1}^{\infty}\left(\mu_{2k}(n)-\eta_{2k}^{\X{38}}(n)\right)q^{n} 
	= 
		\sum_{n_{k} \geq \dotsb \geq n_{1} \geq 1} 
		\frac{(-1;q^{2})_{n_{1}}q^{n_{1} + \dotsb + n_{k}}}
		{(q^{n_{1} + 1})_{n_{1}} (q^{n_{1} + 1})_{\infty} (1-q^{n_{k}})^{2}\dotsm (1-q^{n_{1}})^{2}}
	.
\end{align*}

(11) Using the Bailey pair in the eighth entry in the table on page 470 of \cite{Slater1},
upon correcting the formula for $\alpha_n$, 
relative to $(1,q)$,
\begin{align*}
	\beta_{n} 
	&= 
		\frac{q^{n} \aqprod{-1}{q^2}{n}}{ \aprod{q}{2n}} 
	,&
	\alpha_{n} 
	&=
		\begin{cases}
		1 										& n = 0
		,\\
		(-1)^{k}q^{2k^2 - k}(1 + q^{2k}) 		& n = 2k 
		,\\
		(-1)^{k+1}q^{2k^2 + k}(1 - q^{2k+1}) 	& n = 2k + 1
		,
		\end{cases}
\end{align*}
we define
\begin{align*}
	R_{\X{39}}(z,q) 
	&= 
		\frac{1}{\aprod{q}{\infty}} 
		\Bigg(
			1 
			+ 
			\sum_{n = 1}^{\infty}
			\frac{(1-z)(1-z^{-1})(-1)^{n}q^{2n^2 + n}(1 + q^{2n})}{(1 - zq^{2n})(1-z^{-1}q^{2n})} 
			\\&\quad
			+
			\sum_{n = 0}^{\infty}
			\frac{(1-z)(1-z^{-1})(-1)^{n+1}q^{2n^2 +3n+1}(1 - q^{2n+1})}{(1 - zq^{2n+1})(1 - z^{-1}q^{2n+1})}
		\Bigg)
	,
\end{align*}
and obtain
\begin{align*}
	&\sum_{n = 1}^{\infty}\eta_{2k}^{\X{39}}(n)q^n 
	= 
		\frac{1}{(q)_{\infty}} 
		\left(
			\sum_{n = 1}^{\infty} \frac{(-1)^{n+1}q^{2n^2 - n + 2nk}(1 + q^{2n})}{(1-q^{2n})^{2k}}  
			+
			\sum_{n = 0}^{\infty} \frac{(-1)^{n}q^{2n^2 + n + (2n+1)k}(1 - q^{2n + 1})}{(1 - q^{2n + 1})^{2k}}
		\right)
	,\\
	&\sum_{n = 1}^{\infty} \hspt{\X{39}}{k}{n}q^{n} 
	:= 
		\sum_{n = 1}^{\infty}\left(\mu_{2k}(n)-\eta_{2k}^{\X{39}}(n)\right)q^{n} 
	= 
		\sum_{n_{k} \geq \dotsb \geq n_{1} \geq 1} 
		\frac{(-1;q^{2})_{n_{1}} q^{2n_{1} + n_{2} + \dotsb + n_{k}}}
		{(q^{n_{1} + 1})_{n_{1}} (q^{n_{1} + 1})_{\infty} (1-q^{n_{k}})^{2}
			\dotsm (1-q^{n_{1}})^{2}}
	.
\end{align*}

(12) Using the Bailey pair in the first entry in the table on page 471 of \cite{Slater1}, 
relative to $(1,q)$,
\begin{align*}
	\beta_{n} 
	&= 
		\begin{cases} 
		1 										& n=0 
		,\\ 
		\frac{\aqprod{-q^{2}}{q^{2}}{n-1}}
		{\aprod{q}{2n}} 					& n \geq 1
		,
		\end{cases} 
	&
	\alpha_{n} 
	&=
		\begin{cases}
		1 							& n = 0 
		,\\
		0 							& n = 4k-2 
		,\\
		-q^{8k^2 - 6k + 1} 			& n = 4k -1 
		,\\
		q^{8k^2 - 2k}(1 + q^{4k}) 	& n = 4k 
		,\\
		-q^{8k^2 + 6k + 1} 			& n = 4k + 1
		,\end{cases}
\end{align*}
we define
\begin{align*}
	R_{\X{41}}(z,q) 
	&= 
		\frac{1}{\aprod{q}{\infty}}
		\Bigg(
			1 
			- 
			\sum_{n = 1}^{\infty}
			\frac{(1-z)(1-z^{-1})q^{8n^2 - 2n}}{(1-zq^{4n-1})(1-z^{-1}q^{4n-1})} 
			+ 
			\sum_{n=1}^{\infty}
			\frac{(1-z)(1-z^{-1})q^{8n^2 + 2n}(1 + q^{4n})}{(1 - zq^{4n})(1 - z^{-1}q^{4n})} 
			\\&\quad
			- 
			\sum_{n =0}^{\infty}
			\frac{(1-z)(1-z^{-1})q^{8n^2 + 10n + 2}}{(1 -zq^{4n+1})(1 - z^{-1}q^{4n+1})}
		\Bigg)
	,
\end{align*}
and obtain
\begin{align*}
	&\sum_{n = 1}^{\infty}\eta_{2k}^{\X{41}}(n)q^n
	= 
		\frac{1}{(q)_{\infty}}
		\Bigg(
			\sum_{n = 1}^{\infty} \frac{q^{8n^2 - 6n + 1 + (4n-1)k}}{(1-q^{4n-1})^{2k}} 
			- 
			\sum_{n = 1}^{\infty} \frac{q^{8n^2 - 2n + 4nk}(1 + q^{4n})}{(1-q^{4n})^{2k}} 
			\\&\quad  
			+ 
			\sum_{n = 0}^{\infty} \frac{q^{8n^2 + 6n + 1 + (4n + 1)k}}{(1 - q^{4n + 1})^{2k}}
		\Bigg)
	,\\
	&\sum_{n = 1}^{\infty} \hspt{\X{41}}{k}{n}q^{n} 
	:= 
		\sum_{n = 1}^{\infty}\left(\mu_{2k}(n)-\eta_{2k}^{\X{41}}(n)\right)q^{n} 
	= 
		\sum_{n_{k} \geq \dotsb \geq n_{1} \geq 1} 
		\frac{(-q^2;q^2)_{n_{1}-1} q^{n_{1} + \dotsb + n_{k}}}
		{(q^{n_{1}+1})_{n_{1}} (q^{n_{1} + 1})_{\infty}
			(1-q^{n_{k}})^{2}\dotsm (1-q^{n_{1}})^{2}}
	.
\end{align*}

(13) Using the Bailey pair in the second entry in the table on page 471 of \cite{Slater1}, 
relative to $(1,q)$,
\begin{align*}
	\beta_{n} 
	&= 
		\begin{cases} 
		1 										& n=0 
		,\\ 
		\frac{q^{n} \aqprod{-q^2}{q^2}{n-1}}
		{\aprod{q}{2n}} 					& n \geq 1
		,
		\end{cases} 
	&
	\alpha_{n} 
	&=
		\begin{cases}
		1 								& n = 0 
		,\\
		0 								& n = 4k-2 
		,\\
		-q^{8k^2 - 2k} 					& n = 4k -1 
		,\\
		q^{8k^2 - 2k}(1 + q^{4k}) 		& n = 4k 
		,\\
		-q^{8k^2 + 2k} 					& n = 4k + 1
		,
		\end{cases}
\end{align*}
we define
\begin{align*}
	R_{\X{42}}(z,q) 
	&= 
		\frac{1}{\aprod{q}{\infty}}
		\Bigg(
			1 
			- 
			\sum_{n = 1}^{\infty}
			\frac{(1-z)(1-z^{-1})q^{8n^2 +2n-1}}{(1-zq^{4n-1})(1-z^{-1}q^{4n-1})} 
			+ 
			\sum_{n=1}^{\infty}
			\frac{(1-z)(1-z^{-1})q^{8n^2 + 2}(1 + q^{4n})}{(1-zq^{4n})(1 - z^{-1}q^{4n})} 
			\\& \quad
			- 
			\sum_{n =0}^{\infty}
			\frac{(1-z)(1-z^{-1})q^{8n^2 + 6n + 1}}{(1 -zq^{4n+1})(1 - z^{-1}q^{4n+1})}
		\Bigg)
	,
\end{align*}
and obtain
\begin{align*}
	&\sum_{n = 1}^{\infty}\eta_{2k}^{\X{42}}(n)q^n 
	= 
		\frac{1}{(q)_{\infty}} 
		\Bigg( 
			\sum_{n = 1}^{\infty} \frac{q^{8n^2 - 2n + (4n-1)k}}{(1-q^{4n-1})^{2k}}  
			- 
			\sum_{n = 1}^{\infty} \frac{q^{8n^2 - 2n + 4nk}(1 + q^{4n})}{(1-q^{4n})^{2k}} 
			+
			\sum_{n = 0}^{\infty} \frac{q^{8n^2 + 2n + (4n + 1)k}}{(1 - q^{4n + 1})^{2k}}
		\Bigg)
	,\\
	&\sum_{n = 1}^{\infty} \hspt{\X{42}}{k}{n}q^{n} 
	:= 
		\sum_{n = 1}^{\infty}\left(\mu_{2k}(n)-\eta_{2k}^{\X{42}}(n)\right)q^{n} 
	= 
		\sum_{n_{k} \geq \dotsb \geq n_{1} \geq 1} 
		\frac{(-q^2;q^2)_{n_{1}-1} q^{2n_{1} + n_{2} + \dotsb + n_{k}}}
		{(q^{n_1+1})_{n_1} (q^{n_{1} + 1})_{\infty}
			(1-q^{n_{k}})^{2}\dotsm (1-q^{n_{1}})^{2}}
	.
\end{align*}

(14) Using the Bailey pair E(4) from \cite{Slater1}, relative to $(1,q)$,
\begin{align*} 
	\beta_n 
	&= 
		\frac{q^n}{(q^2; q^2)_n} 
	,&
 	\alpha_n 
 	&=
		\begin{cases} 
		1 							& n=0 
		,\\ 
		(-1)^nq^{n^2-n}(1+q^{2n}) 	& n\geq 1 
		,\\
		\end{cases}
\end{align*}
we define
\begin{align*}
	R_{E4}(z,q)
	&=
		\frac{\aprod{-q}{\infty}}{\aprod{q}{\infty}} 
		\Bigg(
			1
			+ 
			\sum\limits_{n=1}^\infty 
			\frac{(1-z)(1-z^{-1})(-1)^nq^{n^2}(1+q^{2n})}{(1-zq^n)(1-z^{-1}q^n)} 
		\Bigg)
	,
\end{align*} 
and obtain
\begin{align*} 
	\sum_{n = 1}^{\infty}\eta_{2k}^{E4}(n)q^n 
	&=
		\frac{(-q)_\infty}{(q)_\infty}
		\sum\limits_{n=1}^\infty \frac{(-1)^{n+1}q^{n^2-n+kn}(1+q^{2n})}{(1-q^n)^{2k}}
	,\\
	\sum_{n = 1}^{\infty} \hspt{E4}{k}{n}q^{n} 
	&:= 
		\sum_{n = 1}^{\infty}\left(\overline{\mu}_{2k}(n) - \eta_{2k}^{E4}(n)\right)q^{n} 
    =  
    	\sum\limits_{n_k \geq \dotsb \geq n_1 \geq 1}
    	\frac{(-q^{n_1+1})_\infty q^{2n_1+ n_2+\dotsb +n_k}} 
    	{(q^{n_1+1})_\infty (1-q^{n_k})^2\dotsm (1-q^{n_1})^2}
	.
\end{align*} 

(15) Using the Bailey pair I(14) from \cite{Slater2}, relative to $(1,q)$,
\begin{align*}
	\beta_{n} 
	&= 
	 	\begin{cases} 
		1 																& n=0 
		,\\ 
		\frac{\aqprod{-q^{2}}{q^{2}}{n-1}}
		{\aqprod{q}{q^{2}}{n} \aprod{q}{n} \aprod{-q}{n-1}} 	& n \geq 1
		,
		\end{cases} 
	&
	\alpha_{n} 
	&=
		\begin{cases}
		1 									& n = 0 
		,\\
		(-1)^{k}q^{2k^2 - k}(1 + q^{2k}) 	& n = 2k 
		,\\
		0 									& n = 2k + 1
		,
		\end{cases}
\end{align*}
we define
\begin{align*}
	R_{I14}(z,q) 
	&= 
		\frac{\aprod{-q}{\infty}}{\aprod{q}{\infty}}
		\Bigg(
			1 
			+ 
			\sum_{n = 1}^{\infty}
			\frac{(1-z)(1-z^{-1})(-1)^{n}q^{2n^2 + n}(1 + q^{2n})}{(1-zq^{2n})(1-z^{-1}q^{2n})}
		\Bigg) 
	,
\end{align*}
and obtain
\begin{align*}
	\sum_{n = 1}^{\infty}\eta_{2k}^{I14}(n)q^n 
	&= 
		\frac{(-q)_{\infty}}{(q)_{\infty}}
		\sum_{n = 1}^{\infty} \frac{(-1)^{n+1}q^{2n^2 - n + 2nk}(1 + q^{2n})}{(1-q^{2n})^{2k}}
	,\\
	\sum_{n = 1}^{\infty} \hspt{I14}{k}{n}q^{n} 
	&:= 
		\sum_{n = 1}^{\infty}\left(\overline{\mu}_{2k}(n)-\eta_{2k}^{I14}(n)\right)q^{n} 
	= 
		\sum_{n_{k} \geq \dotsb \geq n_{1} \geq 1} 
		\frac{(-q^2;q^2)_{n_{1}-1}  (-q^{n_{1}})_{\infty} q^{n_{1} + \cdots{} + n_{k}}}
		{(q;q^{2})_{n_{1}} (q^{n_{1} + 1})_{\infty}
			(1-q^{n_{k}})^{2}\dotsm (1-q^{n_{1}})^{2}}
	.
\end{align*}

(16) Using the Bailey pair in Lemma 3.1 from \cite{BowmannMclaughlinSills1}, relative to $(1,q)$,
\begin{align*}
	\beta_{n} 
	&= 
		\begin{cases} 
		1 											& n=0 
		,\\ 
		\frac{\aqprod{-q^{3}}{q^{3}}{n-1}}
		{\aprod{-q}{n} \aprod{q}{2n-1}} 	& n \geq 1
		,
		\end{cases} 
	&
	\alpha_{n} =
		\begin{cases}
		1 											& n = 0 
		,\\
		(-1)^{k}q^{\frac{3k(3k-1)}{2}}(1 + q^{3k}) 	& n = 3k 
		,\\
		-2q^{18k^2 + 9k + 1} 						& n = 6k + 1 
		,\\
		2q^{18k^2 + 15k + 3} 						& n = 6k + 2 
		,\\
		2q^{18k^2 + 21k + 6} 						& n = 6k + 4 
		,\\
		-2q^{18k^2 + 27k + 10} 						& n = 6k + 5
		,
	\end{cases}
\end{align*}
we define
\begin{align*}
	R_{\X{46}}(z,q)
	&= 
		\frac{\aprod{-q}{\infty}}{\aprod{q}{\infty}} 
		\Bigg(
			1 
			+ 
			\sum_{n = 1}^{\infty}
			\frac{(1-z)(1-z^{-1})(-1)^{n}q^{\frac{3n(3n+1)}{2}}(1 + q^{3n})}{(1-zq^{3n})(1-z^{-1}q^{3n})} 
			\\&\quad			
			- 
			2\sum_{n = 0}^{\infty}
			\frac{(1-z)(1-z^{-1})q^{18n^2 + 15n + 2}}{(1 - zq^{6n+1})(1 - z^{-1}q^{6n+1})} 
			+ 
			2\sum_{n = 0}^{\infty}
			\frac{(1-z)(1-z^{-1})q^{18n^2 + 21n + 5}}{(1-zq^{6n+2})(1 - z^{-1}q^{6n+2})} 
			\\&\quad
			+ 
			2\sum_{n = 0}^{\infty}
			\frac{(1-z)(1-z^{-1})q^{18n^2 + 27n + 10}}{(1 -zq^{6n+4})(1 - z^{-1}q^{6n+4})} 
			- 
			2\sum_{n = 0}^{\infty}\frac{(1-z)(1-z^{-1})q^{18n^2 + 33n + 15}}{(1 -zq^{6n + 5})(1 - z^{-1}q^{6n+5})} 
		\Bigg)
	,
\end{align*}
and obtain
\begin{align*}
	&\sum_{n = 1}^{\infty}\eta_{2k}^{\X{46}}(n)q^n 
	= 
		\frac{(-q)_{\infty}}{(q)_{\infty}}
		\Bigg(
			\sum_{n = 1}^{\infty} \frac{(-1)^{n+1}q^{\frac{3n(3n-1)}{2} + 3nk}(1 + q^{3n})}{(1-q^{3n})^{2k}} 
			+ 
			2\sum_{n = 0}^{\infty} \frac{q^{18n^2 + 9n + 1 + (6n+1)k}}{(1-q^{6n + 1})^{2k}} 
			\\&\quad
			- 
			2\sum_{n = 0}^{\infty} \frac{q^{18n^2 + 15n + 3 + (6n + 2)k}}{(1 - q^{6n + 2})^{2k}} 
			- 
			2\sum_{n = 0}^{\infty} \frac{q^{18n^2 + 21n + 6 + (6n + 4)k}}{(1 - q^{6n + 4})^{2k}} 
			+ 
			2\sum_{n = 0}^{\infty} \frac{q^{18n^2 + 27n + 10 + (6n + 5)k}}{(1 - q^{6n + 5})^{2k}}
		\Bigg)
	,\\
	&\sum_{n = 1}^{\infty} \hspt{\X{46}}{k}{n}q^{n} 
	:= 
		\sum_{n = 1}^{\infty}\left(\overline{\mu}_{2k}(n)-\eta_{2k}^{\X{46}}(n)\right)q^{n} 
	= 
		\sum_{n_{k} \geq \dotsb \geq n_{1} \geq 1} 
		\frac{(-q^3;q^3)_{n_{1}-1} (-q^{n_{1} + 1})_{\infty} q^{n_1+ \dotsb + n_k}}
		{(q^{n_{1} + 1})_{n_{1}-1} (q^{n_1+1})_{\infty} (1-q^{n_{k}})^{2} \dotsm (1-q^{n_{1}})^{2}}
	.
\end{align*}

(17) Using the Bailey pair J(1) from \cite{Slater2}, 
which also appears as equation (3.8) in \cite{Slater1},
relative to $(1,q)$,
\begin{align*}
	\beta_{n} 
  	&= 
 		\begin{cases} 
		1 																		& n=0 
		,\\ 
		\frac{\aqprod{q^{3}}{q^{3}}{n-1}}{\aprod{q}{2n-1}\aprod{q}{n}}	& n \geq 1
		,
		\end{cases} 
	&
	\alpha_{n}
	&=
		\begin{cases}
		1 									& n = 0 
		,\\
		0 									& n  = 3k-1 
		,\\
		(-1)^{k}q^{\frac{3k(3k-1)}{2}}(1 + q^{3k}) 	& n = 3k 
		,\\
        0 & n = 3k+1
        ,
		\end{cases}
\end{align*}
we define
\begin{align*}
	R_{J1}(z,q)
	&=
		\frac{1}{\aprod{q}{\infty}\aqprod{q^3}{q^3}{\infty}}
		\Bigg(
			1 
			+ 
			\sum_{n = 1}^{\infty}
			\frac{(1-z)(1-z^{-1})(-1)^{n}q^{\frac{3n(3n+1)}{2}}(1+q^{3n})}{(1-zq^{3n})(1-z^{-1}q^{3n})}
		\Bigg) 
	,
\end{align*}
and obtain
\begin{align*} 
	\sum_{n = 1}^{\infty}\eta_{2k}^{J1}(n)q^n 
	&= 
		\frac{1}{(q)_{\infty}(q^3;q^3)_\infty}
		\sum_{n = 1}^{\infty} \frac{(-1)^{n+1}q^{\frac{3n(3n-1)}{2}+3nk}(1 + q^{3n})}{(1-q^{3n})^{2k}} 
	,\\
	\sum_{n = 1}^{\infty} \hspt{J1}{k}{n}q^{n} 
	&:= 
		\sum_{n = 1}^{\infty}\left(\mu^J_{2k}(n) - \eta_{2k}^{J1}(n)\right)q^{n} 
    \\
    &= 
    	\sum_{n_{k} \geq \dotsb \geq n_{1} \geq 1}
   		\frac{q^{n_{1} + \dotsb + n_{k}}}
   		{(q)_{2n_1-1} (q^{3n_1};q^3)_\infty (q^{n_{1} + 1})_{\infty} (1-q^{n_{k}})^{2}\dotsm (1-q^{n_{1}})^{2}}
	.
\end{align*}

(18) Using the Bailey pair J(2) from \cite{Slater2}, 
which also appears unlabeled on page 467 of \cite{Slater1},
relative to $(1,q)$,
\begin{align*}
	\beta_{n} 
	&= 
		\begin{cases} 
		1 										& n=0 
		,\\ 
		\frac{\aqprod{q^{3}}{q^{3}}{n-1}}
		{\aprod{q}{2n} \aprod{q}{n-1}} 	& n \geq 1
		,		
		\end{cases} 
	&
	\alpha_{n}
	&=
		\begin{cases}
		1 										& n = 0 
		,\\
		(-1)^{k+1}q^{\frac{9k(k-1)}{2}+1}  				& n  = 3k-1 
		,\\
		(-1)^{k}q^{\frac{3k(3k-1)}{2}}(1 + q^{3k}) 		& n = 3k 
		,\\
        (-1)^{k+1}q^{\frac{9k(k+1)}{2}+1}  				& n = 3k+1
		,
		\end{cases}
\end{align*}
we define
\begin{align*}
	R_{J2}(z,q)
	&= 
		\frac{1}{\aprod{q}{\infty} \aqprod{q^3}{q^3}{\infty}}
		\Bigg( 
			1
			+
			\sum\limits_{n=1}^\infty 
			\frac{(1-z)(1-z^{-1})(-1)^{n+1}q^{\frac{3n(3n-1)}{2}}}{(1-zq^{3n-1})(1-z^{-1}q^{3n-1})} 
			\\&\quad
			+
			\sum\limits_{n=0}^\infty 
			\frac{(1-z)(1-z^{-1})(-1)^{n+1}q^{\frac{3n(3n+5)}{2} + 2}}{(1-zq^{3n+1})(1-z^{-1}q^{3n+1})}   
			+
			\sum\limits_{n=1}^\infty 
			\frac{(1-z)(1-z^{-1})(-1)^{n}q^{\frac{3n(3n+1)}{2}}(1 + q^{3n})}{(1-zq^{3n})(1-z^{-1}q^{3n})} 
		\Bigg)
	,
\end{align*}
and obtain
\begin{align*}
	\sum_{n = 1}^{\infty}\eta_{2k}^{J2}(n)q^n 
	&=  
		\frac{1}{(q)_{\infty}(q^3;q^3)_\infty}
		\Bigg(
			\sum_{n=1}^\infty \frac{(-1)^{n}q^{ \frac{9n(n-1)}{2} +1+ (3n-1)k}}{(1-q^{3n-1})^{2k}} 
			+
			\sum_{n=0}^\infty \frac{(-1)^{n}q^{ \frac{9n(n+1)}{2} +1+(3n+1)k}}{(1-q^{3n+1})^{2k}}   
			\\&\quad
			-
			\sum_{n=1}^\infty \frac{(-1)^{n}q^{ \frac{3n(3n-1)}{2}+3nk}(1 + q^{3n})}{(1-q^{3n})^{2k}} 
		\Bigg) 
	,\\ 
	\sum_{n = 1}^{\infty} \hspt{J2}{k}{n}q^{n} 
	&:= 
		\sum_{n = 1}^{\infty}\left(\mu^J_{2k}(n) - \eta_{2k}^{J2}(n)\right)q^{n} 
	\\
    &= 
    	\sum_{n_{k} \geq \dotsb \geq n_{1} \geq 1}
    	\frac{q^{n_{1} + \dotsb + n_{k}}}
    	{(q)_{n_{1}-1}  (q^{n_{1}+1})_{n_{1}}  (q^{3n_1};q^3)_\infty (q^{n_{1} + 1})_{\infty}
    		(1-q^{n_{k}})^{2}\dotsm (1-q^{n_{1}})^{2}}
	.
\end{align*}
    
(19) Using the Bailey pair J(3) from \cite{Slater2}, 
which also appears unlabeled on page 467 of \cite{Slater1},
relative to $(1,q)$,
\begin{align*}
	\beta_{n} 
	&= 
		\begin{cases} 
		1 										& n=0 
		,\\ 
		\frac{q^n\aqprod{q^{3}}{q^{3}}{n-1}}
		{\aprod{q}{2n} \aprod{q}{n-1}} 	& n \geq 1
		,
		\end{cases} 
	&
	\alpha_{n}
	&=
		\begin{cases}
		1 									& n = 0 
		,\\
		(-1)^{k+1}q^{\frac{3k(3k-1)}{2}}  			& n  = 3k-1 
		,\\
		(-1)^{k}q^{\frac{3k(3k-1)}{2}}(1 + q^{3k}) 	& n = 3k 
		,\\
        (-1)^{k+1}q^{\frac{3k(3k+1)}{2}}  			& n = 3k+1
		,
		\end{cases}
\end{align*}
we define
\begin{align*}
	R_{J3}(z,q)
	&= 
		\frac{1}{\aprod{q}{\infty} \aqprod{q^3}{q^3}{\infty}}
		\Bigg(
			1
			+
			\sum\limits_{n=1}^\infty 
			\frac{(1-z)(1-z^{-1})(-1)^{n+1}q^{\frac{3n(3n+1)}{2}-1}}{(1-zq^{3n-1})(1-z^{-1}q^{3n-1})} 
			\\&\quad
			+
			\sum\limits_{n=0}^\infty 
			\frac{(1-z)(1-z^{-1})(-1)^{n+1}q^{\frac{9n(n+1)}{2}+1}}{(1-zq^{3n+1})(1-z^{-1}q^{3n+1})}   
			+
			\sum\limits_{n=1}^\infty 
			\frac{(1-z)(1-z^{-1})(-1)^{n}q^{\frac{3n(3n+1)}{2}}(1 + q^{3n})}{(1-zq^{3n})(1-z^{-1}q^{3n})} 
		\Bigg)  
	,
\end{align*}
and obtain
\begin{align*}
	\sum_{n = 1}^{\infty}\eta_{2k}^{J3}(n)q^n 
	&=  
		\frac{1}{(q)_{\infty}(q^3;q^3)_\infty}
		\Bigg(
			\sum\limits_{n=1}^\infty \frac{(-1)^{n} q^{ \frac{3n(3n-1)}{2} +(3n-1)k}}{(1-q^{3n-1})^{2k}} 
			+
			\sum\limits_{n=0}^\infty \frac{(-1)^{n}q^{ \frac{3n(3n+1)}{2} +(3n+1)k}}{(1-q^{3n+1})^{2k}}   
			\\&\quad
			-
			\sum\limits_{n=1}^\infty \frac{(-1)^{n}q^{ \frac{3n(3n-1)}{2} +3nk}(1 + q^{3n})}{(1-q^{3n})^{2k}} 
		\Bigg)  
	,\\
	\sum_{n = 1}^{\infty} \hspt{J3}{k}{n}q^{n} 
	&:= 
		\sum_{n = 1}^{\infty}\left(\mu^J_{2k}(n) - \eta_{2k}^{J3}(n)\right) q^{n} 
	\\
    &= 
    	\sum_{n_{k} \geq \dotsb \geq n_{1} \geq 1}
    	\frac{q^{2n_{1} + n_{2} + \dotsb + n_{k}}}
    	{(q)_{n_{1}-1} (q^{n_{1}+1})_{n_{1}} (q^{3n_1};q^3)_\infty (q^{n_{1} + 1})_{\infty}
    		(1-q^{n_{k}})^{2}\dotsm (1-q^{n_{1}})^{2}}
	.
\end{align*}
    
(20) Using the Bailey pair in the ninth entry in the table on page 470 of \cite{Slater1},
relative to $(1,q)$,
\begin{align*}
	\beta_{n} 
	&= 
		\begin{cases} 
		1 																& n=0 
		,\\ 
		\frac{\aqprod{q^{2}}{q^{2}}{n-1}}
		{\aqprod{q}{q^{2}}{n} \aprod{q}{n} \aprod{q}{n-1}} 		& n \geq 1
		,
		\end{cases} 
	&
	\alpha_{n}
	&=
		\begin{cases}
		1 							& n = 0 
		,\\
		q^{2k^2 - k}(1 + q^{2k}) 	& n = 2k 
		,\\
		0 							& n = 2k + 1
		,
		\end{cases}
\end{align*}
we define
\begin{align*}
	R_{\X{40}}(z,q) 
	&= 
		\frac{1}{\aprod{q}{\infty} \aqprod{q^2}{q^2}{\infty}}
		\Bigg(
			1 
			+ 
			\sum_{n = 1}^{\infty}
			\frac{(1-z)(1-z^{-1})q^{2n^2+n}(1+q^{2n})}{(1-zq^{2n})(1-z^{-1}q^{2n})}
		\Bigg)
	,
\end{align*}
and obtain
\begin{align*}
	\sum_{n = 1}^{\infty}\eta_{2k}^{\X{40}}(n)q^n 
	&= 
		\frac{-1}{(q)_{\infty}(q^2;q^2)_{\infty}}
		\sum_{n = 1}^{\infty} \frac{q^{2n^2 - n + 2nk}(1 + q^{2n})}{(1-q^{2n})^{2k}}
	,\\
	\sum_{n = 1}^{\infty} \hspt{\X{40}}{k}{n}q^{n} 
	&:= 
		\sum_{n = 1}^{\infty}\left(\mu^{\X{40}}_{2k}(n)-\eta_{2k}^{\X{40}}(n)\right)q^{n} 
	\\
	&= 
		\sum_{n_{k} \geq \dotsb \geq n_{1} \geq 1} 
		\frac{q^{n_{1} + \dotsb + n_{k}}}
		{(q)_{n_1 - 1} (q;q^2)_{n_1} (q^{2n_1};q^2)_{\infty}  (q^{n_1 + 1})_{\infty}
			(1-q^{n_{k}})^{2}\dotsm (1-q^{n_{1}})^{2}}
	.
\end{align*}

(21) Using the Bailey pair F(3) from \cite{Slater1}, relative to $(1,q^2)$,
\begin{align*}
	\beta_n 
	&= 
		\frac{1}{q^n(q)_{2n}} 
	,&
	\alpha_n 
	&=
		\begin{cases} 
		1 				& n=0 
		,\\ 
		q^{n}+q^{-n} 	& n\geq 1 
		,
	\end{cases}
\end{align*}
we define
\begin{align*}
	R_{F3}(z,q)
	&= 
		\frac{1}{\aqprod{q^2}{q^2}{\infty}}
		\Bigg(
			1
			+
			\sum\limits_{n=1}^\infty
			\frac{(1-z)(1-z^{-1})q^{n}(1+q^{2n})}{(1-zq^{2n})(1-z^{-1}q^{2n})}
		\Bigg) 
	,
\end{align*}
and obtain
\begin{align*}
	&
	\sum\limits_{n=1}^\infty \eta^{F3}_{2k}(n)q^n 
	=
		\frac{-1}{(q^2;q^2)_\infty}
		\sum_{n=1}^\infty \frac{q^{2nk-n}(1+q^{2n})}{(1-q^{2n})^{2k}} 
	,\\
	&
	\sum_{n = 1}^{\infty} \hspt{F3}{k}{n}q^{n} 
	:= 
		\sum_{n = 1}^{\infty}\left(\mu^{F}_{2k}(n) - \eta_{2k}^{F3}(n)\right)q^{n} 
	=
		\sum\limits_{n_k \geq \dotsb \geq n_1 \geq 1} 
		\frac{q^{n_1+2n_2\dotsb +2n_k}} 
		{(q;q^2)_{n_1} (q^{2n_1+2};q^2)_{\infty}(1-q^{2n_k})^2 \dotsm (1-q^{2n_1})^2} 
	.
\end{align*}

(22) Using the Bailey pair G(1) from \cite{Slater1}, 
which is also L(3) from \cite{Slater2},
relative to $(1,q^2)$,
\begin{align*} 
	\beta_n 
	&= 
		\frac{1}{\aqprod{-q}{q^2}{n} \aqprod{q^4}{q^4}{n}} 
	,&
 	\alpha_n 
 	&=
		\begin{cases} 
		1 								& n=0
		,\\ 
		(-1)^nq^{\frac{n(3n-1)}{2}}(1+q^{n}) 	& n\geq 1
		,
		\end{cases}
\end{align*}
we define
\begin{align*}
	R_{G1}(z,q)
	&=
		\frac{\aqprod{-q}{q^2}{\infty}}{\aqprod{q^2}{q^2}{\infty}^2}		
		\Bigg(
			1	 
			+ 
			\sum_{n = 1}^{\infty}
			\frac{(1-z)(1-z^{-1})(-1)^{n}q^{\frac{3n(n+1)}{2}}(1+q^{n})}{(1-zq^{2n})(1-z^{-1}q^{2n})}
		\Bigg) 
	,
\end{align*}
and obtain
\begin{align*}
	\sum_{n = 1}^{\infty}\eta_{2k}^{G1}(n)q^n 
	&= 
		\frac{(-q;q^2)_{\infty}}{(q^2;q^2)^2_{\infty}}
		\sum_{n = 1}^{\infty} \frac{(-1)^{n+1}q^{ \frac{n(3n-1)}{2} +2nk}(1+q^{n})}{(1-q^{2n})^{2k}} 
	,\\
	\sum_{n = 1}^{\infty} \hspt{G1}{k}{n}q^{n} 
	&:= 
		\sum_{n = 1}^{\infty}\left(\mu^G_{2k}(n)- \eta_{2k}^{G1}(n)\right)q^{n} 
    =  
    	\sum\limits_{n_k \geq \dotsb \geq n_1 \geq 1} 
    	\frac{(-q^{2n_1+1};q^2)_\infty q^{2n_1+\dotsb +2n_k}} 
    	{(q^4;q^4)_{n_1} (q^{2n_1+2};q^2)^2_{\infty}
    		(1-q^{2n_k})^2\dotsm (1-q^{2n_1})^2}
.
\end{align*}

(23) Using the Bailey pair G(3) from \cite{Slater1}, relative to $(1,q^2)$,
\begin{align*} 
	\beta_n 
	&= 
		\frac{q^{2n}}{\aqprod{-q}{q^2}{n} \aqprod{q^4}{q^4}{n}} 
	,&
 	\alpha_n 
 	&=
		\begin{cases} 
		1 								& n=0 
		,\\ 
		(-1)^nq^{\frac{3n(n-1)}{2}}(1+q^{3n}) 	& n\geq 1 
		,
		\end{cases}
\end{align*}
we define
\begin{align*}
	R_{G3}(z,q)
	&=
		\frac{\aqprod{-q}{q^2}{\infty}}{\aqprod{q^2}{q^2}{\infty}^2}
		\Bigg(
			1 
			+ 
			\sum_{n = 1}^{\infty}
			\frac{(1-z)(1-z^{-1})(-1)^{n}q^{\frac{n(3n+1)}{2}}(1+q^{3n})}{(1-zq^{2n})(1-z^{-1}q^{2n})}
		\Bigg) 
	,
\end{align*}
and obtain
\begin{align*}
	\sum_{n = 1}^{\infty}\eta_{2k}^{G3}(n)q^n 
	&= 
		\frac{(-q;q^2)_{\infty}}{(q^2;q^2)^2_{\infty}}
		\sum_{n = 1}^{\infty} \frac{(-1)^{n+1}q^{ \frac{3n(n-1)}{2} +2nk}(1+q^{3n})}{(1-q^{2n})^{2k}} 
	,\\
	\sum_{n = 1}^{\infty} \hspt{G3}{k}{n}q^{n} 
	&:= 
		\sum_{n = 1}^{\infty}\left(\mu^G_{2k}(n)-\eta_{2k}^{G3}(n)\right)q^{n} 
    =  
    	\sum\limits_{n_k \geq \dotsb \geq n_1 \geq 1} 
    	\frac{(-q^{2n_1+1};q^2)_\infty q^{4n_1+2n_2+\dotsb+2n_k}} 
    	{(q^4;q^4)_{n_1} (q^{2n_1+2};q^2)^2_{\infty}
    		(1-q^{2n_k})^2\dotsm (1-q^{2n_1})^2}
	.
\end{align*}

(24) Using the Bailey pair in the first entry in the table on page 470 of \cite{Slater1}, 
relative to $(1,q^2)$,
\begin{align*}
	\beta_{n} 
	&= 
		\frac{q^{n^2 - 2n}}{\aqprod{q^4}{q^4}{n} \aqprod{q}{q^2}{n}} 
	,&
	\alpha_{n} 
	&=
		\begin{cases} 
		1 											& n = 0 
		,\\	
		(-1)^{k}q^{2k^2 - 3k}(1 + q^{6k}) 			& n = 2k 
		,\\
		(-1)^{k}q^{2k^2 - k - 1}(1 - q^{6k + 3}) 	& n = 2k + 1
		,
		\end{cases}
\end{align*}
we define
\begin{align*}
	R_{Y1}(z,q) 
	&= 
		\frac{1}{\aqprod{q^2}{q^2}{\infty}^2}
		\Bigg(
			1 
			+ 
			\sum_{n =1}^{\infty}
			\frac{(1-z)(1-z^{-1})(-1)^{n}q^{2n^2 + n}(1 + q^{6n})}{(1-zq^{4n})(1-z^{-1}q^{4n})} 
			\\&\quad 
			+
			\sum_{n = 0}^{\infty}
			\frac{(1-z)(1-z^{-1})(-1)^{n}q^{2n^2+3n+1}(1 - q^{6n + 3})}{(1-zq^{4n+2})(1-z^{-1}q^{4n+2})}
		\Bigg)
	,
\end{align*}
and obtain
\begin{align*}
	\sum_{n = 1}^{\infty}\eta_{2k}^{Y1}(n)q^n 
	&= 
		\frac{1}{(q^2;q^2)_{\infty}^2} 
		\left(
			\sum_{n = 1}^{\infty}\frac{(-1)^{n+1}q^{2n^2 - 3n + 4nk}(1 + q^{6n})}{(1-q^{4n})^{2k}} 
			\right.\\&\quad\left.
			+ 
			\sum_{n = 0}^{\infty}\frac{(-1)^{n+1}q^{2n^2 - n - 1 + (4n+2)k}(1 - q^{6n + 3})}{(1-q^{4n+2})^{2k}}
		\right)
	,\\
	\sum_{n = 1}^{\infty} \hspt{Y1}{k}{n}q^{n} 
	&:= 
		\sum_{n = 1}^{\infty}\left(\mu^{Y}_{2k}(n) - \eta_{2k}^{Y1}(n)\right)q^{n} 
	\\&= 
		\sum_{n_{k} \geq \dotsb \geq n_{1} \geq 1}
		\frac{q^{n_{1}^2 + 2n_{2} + \dotsb + 2n_{k}}}
		{(q;q^2)_{n_{1}} (q^4;q^4)_{n_{1}}  (q^{2n_{1} + 2} ; q^2)_{\infty}^{2} 
			(1-q^{2n_{k}})^{2} \dotsm (1-q^{2n_{1}})^{2}}
	.
\end{align*}

(25) Using the Bailey pair in the second entry in the table on page 470 of \cite{Slater1}, 
relative to $(1,q^2)$,
\begin{align*}
	\beta_{n} 
	&= 
		\frac{q^{n^2}}{\aqprod{q^4}{q^4}{n} \aqprod{q}{q^2}{n}} 
	,&
	\alpha_{n} 
	&=
		\begin{cases} 
		1 										& n = 0 
		,\\	
		(-1)^{k}q^{2k^2 - k}(1 + q^{2k}) 		& n = 2k 
		,\\
		(-1)^{k+1}q^{2k^2 + k}(1 - q^{2k+1}) 	& n = 2k + 1
		,
		\end{cases}
\end{align*}
we define
\begin{align*}
	R_{Y2}(z,q) 
	&= 
		\frac{1}{\aqprod{q^2}{q^2}{\infty}^2}
		\Bigg(
			1 
			+ 
			\sum_{n =1}^{\infty}
			\frac{(1-z)(1-z^{-1})(-1)^{n}q^{2n^2 + 3n}(1 + q^{2n})}{(1-zq^{4n})(1-z^{-1}q^{4n})} 
			\\& \quad
			+
			\sum_{n = 0}^{\infty}
			\frac{(1-z)(1-z^{-1})(-1)^{n+1}q^{2n^2 +5n+2}(1 - q^{2n + 1})}{(1-zq^{4n+2})(1-z^{-1}q^{4n+2})}
		\Bigg)
	,
\end{align*}
and obtain
\begin{align*}
	\sum_{n = 1}^{\infty}\eta_{2k}^{Y2}(n)q^n 
	&= 
		\frac{1}{(q^2;q^2)_{\infty}^{2}}
		\left(
			\sum_{n = 1}^{\infty}\frac{(-1)^{n+1}q^{2n^2 - n + 4nk}(1 + q^{2n})}{(1-q^{4n})^{2k}}
			+
		 	\sum_{n = 0}^{\infty}\frac{(-1)^{n}q^{2n^2 + n + (4n+2)k}(1 - q^{2n+1})}{(1-q^{4n+2})^{2k}}
		 \right)
	,\\
	\sum_{n = 1}^{\infty} \hspt{Y2}{k}{n}q^{n} 
	&:= 
		\sum_{n = 1}^{\infty}\left(\mu^{Y}_{2k}(n)- \eta_{2k}^{Y2}(n)\right)q^{n} 
	\\
	&= 
		\sum_{n_{k} \geq \dotsb \geq n_{1} \geq 1}
		\frac{q^{n_{1}^2 + 2n_{1} + 2n_{2} + \dotsb + 2n_{k}}}
		{(q;q^2)_{n_{1}} (q^4;q^4)_{n_{1}} (q^{2n_{1} + 2} ; q^2)_{\infty}^{2}
			(1-q^{2n_{k}})^{2}\dotsm (1-q^{2n_{1}})^{2}}
	.
\end{align*}

(26) Using the Bailey pair in the third entry in the table on page 470 of \cite{Slater1}, 
relative to $(1,q^2)$,
\begin{align*}
	\beta_{n} 
	&= 
		\frac{1}{\aqprod{q^4}{q^4}{n} \aqprod{q}{q^2}{n}} 
	,&
	\alpha_{n} 
	&=
		\begin{cases} 
		1 											& n = 0 
		,\\	
		(-1)^{k}q^{6k^2 - k}(1 + q^{2k}) 			& n = 2k 
		,\\
		(-1)^{k}q^{6k^2 + 5k + 1}(1 - q^{2k+1}) 	& n = 2k + 1
		,
		\end{cases}
\end{align*}
we define
\begin{align*}
	R_{Y3}(z,q) 
	&= 
		\frac{1}{\aqprod{q^2}{q^2}{\infty}^2}
		\Bigg(
			1 
			+ 
			\sum_{n =1}^{\infty}
			\frac{(1-z)(1-z^{-1})(-1)^{n}q^{6n^2 + 3n}(1 + q^{2n})}{(1-zq^{4n})(1-z^{-1}q^{4n})} 
			\\&\quad
			+
			\sum_{n = 0}^{\infty}
			\frac{(1-z)(1-z^{-1})(-1)^{n}q^{6n^2 + 9n+3}(1 - q^{2n + 1})}{(1-zq^{4n+2})(1-z^{-1}q^{4n+2})}
		\Bigg)
	,
\end{align*}
and obtain
\begin{align*}
	\sum_{n = 1}^{\infty}\eta_{2k}^{Y3}(n)q^n 
	&= 
		\frac{1}{(q^2;q^2)_{\infty}^2} 
		\left(
			\sum_{n = 1}^{\infty}\frac{(-1)^{n+1}q^{6n^2 - n + 4nk}(1 + q^{2n})}{(1-q^{4n})^{2k}} 
			\right. \\&\quad \left.
			+
			\sum_{n = 0}^{\infty}\frac{(-1)^{n+1}q^{6n^2 + 5n + 1 + (4n + 2)k}(1 - q^{2n+1})}{(1-q^{4n+2})^{2k}}
		\right)
	,\\
	\sum_{n = 1}^{\infty} \hspt{Y3}{k}{n}q^{n} 
	&:= 
		\sum_{n = 1}^{\infty}\left(\mu^{Y}_{2k}(n)-\eta_{2k}^{Y3}(n)\right)q^{n} 
	\\
	&= 
		\sum_{n_{k} \geq \dotsb \geq n_{1} \geq 1}
		\frac{q^{2n_1 + \dotsb + 2n_k}}
		{(q;q^2)_{n_{1}} (q^4;q^4)_{n_{1}} (q^{2n_{1} + 2} ; q^2)_{\infty}^{2} 
			(1-q^{2n_{k}})^{2} \dotsm (1-q^{2n_{1}})^{2}}
	.
\end{align*}

(27) Using the Bailey pair in the fourth entry in the table on page 470 of \cite{Slater1}, 
relative to $(1,q^2)$,
\begin{align*}
	\beta_{n} 
	&= 
		\frac{q^{2n}}{\aqprod{q^4}{q^4}{n} \aqprod{q}{q^2}{n}} 
	,&
	\alpha_{n} 
	&=
		\begin{cases} 
		1 										& n = 0 
		,\\	
		(-1)^{k}q^{6k^2 - 3k}(1 + q^{6k}) 		& n = 2k 
		,\\
		(-1)^{k+1}q^{6k^2 + 3k}(1 - q^{6k+3}) 	& n = 2k + 1
		,
		\end{cases}
\end{align*}
we define
\begin{align*}
	R_{Y4}(z,q) 
	&= 
		\frac{1}{\aqprod{q^2}{q^2}{\infty}^2}
		\Bigg(
			1 
			+ 
			\sum_{n =1}^{\infty}
			\frac{(1-z)(1-z^{-1})(-1)^{n}q^{6n^2 + n}(1 + q^{6n})}{(1-zq^{4n})(1-z^{-1}q^{4n})} 
			\\&\quad 
			+
			\sum_{n = 0}^{\infty}
			\frac{(1-z)(1-z^{-1})(-1)^{n+1}q^{6n^2 +7n+2}(1 - q^{6n + 3})}{(1-zq^{4n+2})(1-z^{-1}q^{4n+2})}
		\Bigg)
	,
\end{align*}
and obtain
\begin{align*}
	\sum_{n = 1}^{\infty}\eta_{2k}^{Y4}(n)q^n 
	&= 
		\frac{1}{(q^2;q^2)_{\infty}^2} 
		\left(
			\sum_{n = 1}^{\infty}\frac{(-1)^{n+1}q^{6n^2 - 3n + 4nk}(1 + q^{6n})}{(1-q^{4n})^{2k}} 
			\right.\\&\quad \left.
			+ 
			\sum_{n = 0}^{\infty}\frac{(-1)^{n}q^{6n^2 + 3m + (4n + 2)k}(1 - q^{6n+3})}{(1-q^{4n+2})^{2k}}
		\right)
	,\\
	\sum_{n = 1}^{\infty} \hspt{Y4}{k}{n}q^{n} 
  	&:= 
  		\sum_{n = 1}^{\infty}\left(\mu^{Y}_{2k}(n)-\eta_{2k}^{Y4}(n)\right)q^{n} 
  	\\
	&= 
		\sum_{n_{k} \geq \dotsb \geq n_{1} \geq 1}
		\frac{q^{4n_{1} + 2n_{2} + \dotsb + 2n_{k}}}
		{(q;q^2)_{n_{1}} (q^4;q^4)_{n_{1}} (q^{2n_{1} + 2} ; q^2)_{\infty}^{2}
			(1-q^{2n_{k}})^{2}\dotsm (1-q^{2n_{1}})^{2}}
	.
\end{align*}

(28) Using the Bailey pair L(2) from \cite{Slater2}, 
which is also M(1) from \cite{Slater2},
relative to $(1,q^4)$,
\begin{align*} 
	\beta_n 
	&= 
		\frac{\aqprod{q}{q^2}{2n}}{\aqprod{q^4}{q^4}{2n}} 
	,&
 	\alpha_n 
 	&=
		\begin{cases} 
		1 							& n=0 
		,\\ 
		(-1)^nq^{2n^2-n}(1+q^{2n}) 	& n\geq 1
		,
		\end{cases}
\end{align*}
we define
\begin{align*}
	R_{L2}(z,q)
	&= 
		\frac{\aqprod{-q}{q}\infty}{\aqprod{q^4}{q^4}{\infty}}
		\Bigg(
			1
			+
			\sum\limits_{n=1}^\infty\frac{(1-z)(1-z^{-1})(-1)^nq^{2n^2+3n}(1+q^{2n})}{(1-zq^{4n})(1-z^{-1}q^{4n})}
		\Bigg)
	,
\end{align*}
and obtain
\begin{align*}
	\sum\limits_{n=1}^\infty \eta^{L2}_{2k}(n)q^n 
	&=
		\frac{(-q)_\infty}{(q^4;q^4)_\infty}
		\sum_{n=1}^\infty 
		\frac{(-1)^{n+1}q^{2n^2-n+4nk}(1+q^{2n})}{(1-q^{4n})^{2k}} 
	,\\
	\sum_{n = 1}^{\infty} \hspt{L2}{k}{n}q^{n} 
	&:= 
		\sum_{n=1}^\infty \left(\mu^{L2}_{2k}(n)-\eta^{L2}_{2k}(n)\right)q^n 
	\\
	&=  
		\sum_{n_k \geq \dotsb \geq n_1 \geq 1} 
		\frac{q^{4n_1+\dotsb +4n_k}} 
		{(q^{4n_1+4};q^4)_{n_1} (q^{4n_1+1};q^2)_\infty (q^{4n_1+4};q^4)_{\infty}
			(1-q^{4n_k})^2 \dotsm (1-q^{4n_1})^2} 
	.
\end{align*}
\end{corollary}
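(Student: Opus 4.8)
The corollary is a catalogue of direct applications of Theorem \ref{TheoremMain}, so the proof is uniform across all twenty-eight items and, once the hypotheses are checked, amounts to routine bookkeeping. The plan is as follows. For each listed Bailey pair $(\alpha,\beta)$ I would first confirm that it is indeed a Bailey pair relative to $(1,q)$ (respectively relative to $(1,q^2)$ or $(1,q^4)$ in items (21)--(28)), appealing to Slater's tables in \cite{Slater1,Slater2} and to Lemma 3.1 of \cite{BowmannMclaughlinSills1}, and incorporating the corrections to $\beta_n$ for L(5) and to $\alpha_n$ for the eighth entry on page 470 of \cite{Slater1} that are flagged in the statements; in the few cases where a citation is awkward this reduces to a short direct computation from $\beta_n=\sum_{k=0}^n \alpha_k/((q)_{n-k}(q)_{n+k})$. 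Next I would check the remaining two hypotheses of Theorem \ref{TheoremMain}: that $\alpha_0=\beta_0=1$, which is immediate from each table, and that the general formula for $\alpha_n$ satisfies $\alpha_{-n}=\alpha_n$ when read as a bilateral sequence. The latter is the one point that needs genuine (if elementary) attention: for the pairs whose $\alpha_n$ is given piecewise on residue classes (e.g.\ $n=3k-1,3k,3k+1$, or $n=2k,2k+1$) one must check that negating $n$ swaps the appropriate branches and sends $k\mapsto -k$ in a way that leaves the quadratic exponents and the factors $1\pm q^{jk}$ fixed, while for pairs such as F(3), where $\alpha_n=q^n+q^{-n}$, the symmetry is visible on inspection.

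With the hypotheses in hand, for each pair I would then choose the normalizing series $P_X(q)$. Guided by the requirement (made precise in Section 4) that $\hspt{X}{k}{n}$ be a nonnegative integer, $P_X(q)$ is taken to be the reciprocal of the infinite product that must multiply $(q)_{n_1}^2\beta_{n_1}$ in \eqref{EqMainTheoremSymmetrizexRankCrankDifference} to leave a series with manifestly nonnegative coefficients: this is $1/(q)_\infty$ for the $(1,q)$-pairs whose $\beta_n$ is built from $(q)_n$- or $(q;q^2)_n$-type factors, $(-q)_\infty/(q)_\infty$ when $\beta_n$ carries a factor like $(q^2;q^2)_n$, $1/((q)_\infty(q^3;q^3)_\infty)$ for the J-pairs, and the analogous choices with $q\mapsto q^2$ or $q\mapsto q^4$ in the base-$q^r$ cases. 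One then reads off the crank-like generating function $C^X(z,q)$ and its moments $\mu^X_{2k}(n)$ from the list displayed just before the corollary, observing that $P_X(q)(q)_\infty$ (or $P_X(q)(q^2;q^2)_\infty$, etc.) is exactly the infinite product appearing there, so that the $\mu^X_{2k}$ occurring in \eqref{EqMainTheoremSymmetrizexRankCrankDifference} are precisely the ones quoted.

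The rest is pure substitution. Plugging the piecewise $\alpha_n$ into the sum $\sum_{n\ge 1}\alpha_n q^n(1-z)(1-z^{-1})/((1-zq^n)(1-z^{-1}q^n))$ and splitting it into one subsum per branch of $\alpha_n$ --- reindexing so that each summation variable ranges over the full relevant residue class --- produces the displayed formula for $R_X(z,q)$; the same substitution into \eqref{EqMainTheoremSymmetrizedRankMoment} yields $\sum_n\eta^X_{2k}(n)q^n$, where the overall $-P_X(q)$ accounts for the sign flips $(-1)^{n+1}$ and the replacement $q^n\mapsto q^{nk}$, $(1-zq^n)(1-z^{-1}q^n)\mapsto(1-q^n)^{2k}$ accounts for the shifted exponents and the $2k$-th powers in the denominators. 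Finally, substituting $\beta_{n_1}$ into \eqref{EqMainTheoremSymmetrizexRankCrankDifference} and simplifying $(q)_{n_1}^2\beta_{n_1}$ --- e.g.\ $(q)_{n_1}^2/(q)_{2n_1}=(q)_{n_1}/(q^{n_1+1})_{n_1}$, or $(q)_{n_1}^2/((q;q^2)_{n_1}(q)_{n_1})=(q)_{n_1}/(q;q^2)_{n_1}$, and similarly in the remaining cases --- gives the stated multisum for $\hspt{X}{k}{n}$, after which the inequality $\mu^X_{2k}(n)\ge\eta^X_{2k}(n)$ is immediate from nonnegativity of the coefficients (with strictness read off from a single low-order term, as in the worked Example).

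The main obstacle is not conceptual but organizational: correctly carrying out the residue-class splitting and index shifts for each of the twenty-eight pairs, verifying the bilateral symmetry $\alpha_{-n}=\alpha_n$ in each case, and --- in items (21)--(28) --- keeping careful track of which powers of $q$ have been rescaled under $q\mapsto q^2$ or $q\mapsto q^4$, so that the exponents appearing in $R_X(z,q)$, in $\sum_n\eta^X_{2k}(n)q^n$, and in the multisum for $\hspt{X}{k}{n}$ all agree with the displayed formulas. No new ideas beyond Theorem \ref{TheoremMain} and the catalogue of $\mu^X_{2k}$ generating functions are required.
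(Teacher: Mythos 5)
Your proposal is correct and follows exactly the route the paper takes: the corollary is obtained by applying Theorem \ref{TheoremMain} to each listed Bailey pair (after checking $\alpha_0=\beta_0=1$ and the bilateral symmetry $\alpha_{-n}=\alpha_n$), choosing $P_X(q)$ so that $P_X(q)(q)_{n_1}^2\beta_{n_1}$ has nonnegative coefficients, matching $P_X(q)$ to the appropriate crank moment generating function, and then substituting into \eqref{EqMainTheoremSymmetrizedRankMoment} and \eqref{EqMainTheoremSymmetrizexRankCrankDifference} with the residue-class splitting and the $q\mapsto q^2$, $q\mapsto q^4$ rescalings, just as in the paper's worked B(2) example.
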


As in our example, we see that in each case we have 
$\mu^{X}_{2k}(n)\ge\eta^{X}_{2k}(n)$. 
We note that the symmetrized moments 
$\eta^{X}_{2k}(n)$ for $X=F3$, $L5$, and $\X{40}$ are non-positive integers;
$\hspt{\X{38}}{k}{n}=2\hspt{\X{41}}{k}{n}$;
and $\hspt{\X{39}}{k}{n}=2\hspt{\X{42}}{k}{n}$.
By determining when 
$\mu^{X}_{2}(n)>\eta^{X}_{2}(n)$, we also obtain the strict inequalities for
$M^{X}_{2k}(n)>N^{X}_{2k}(n)$, upon noting that
\begin{align*} 
	M^X_{2k}(n)-N^{X}_{2k}(n) 
	&=
		\sum_{j=1}^{k}(2j)!S^*(k,j)( \mu^X_{2j}(n)-\eta^{X}_{2j}(n))
	\ge 
		\mu^X_{2}(n)-\eta^{X}_{2}(n)
.
\end{align*}
We record these inequalities in the following table.

\begin{center}
T{\sc able} 1. Strict Inequalities for $M^X_{2k}(n)>N^{X}_{2k}(n)$, for positive $k$.
\end{center}
\begin{align*}
\begin{array}{c|c||c|c||c|c}
	X & n & X & n & X & n	
	\\\hline
	A1&			n \geq 1&	
	\X{39}&		n \geq 2&
	F3&  		n \geq 1
	\\
	A3& 		n \geq 2&	
	\X{41}&		n \geq 1&
	G1&			n=2,{ } n \geq 4	
	\\
	A5& 		n \geq 2&	
	\X{42}&		n \geq 1&
	G3&			n=4,{ } n \geq 6
	\\
	A7&			n \geq 1&	
	E4&			n \geq 2&			
	Y1&			n \geq 1
	\\
	B2&			n \geq 2&	
	I14&		n \geq 2&
	Y2&			n \geq 3	
	\\	
	C1&			n \geq 1&			
	\X{46}& 	n \geq 1&
	Y3&			n \geq 2
	\\	
	C2&			n \geq 2&			
	J1&			n \geq 1&	
	Y4&			n \geq 4
	\\	
	C5& 		n \geq 1&			
	J2&			n \geq 1&	
	L2&			n=4,8,9, n\geq 11	 
	\\	
	L5&			n \geq 1& 	
	J3&			n \geq 2&	
	\\	
	\X{38}&		n \geq 1&	
	\X{40}&		n \geq 1&	
	\end{array}		
\end{align*}

We can actually determine inequalities between some of the ranks that compare
against the same crank. In 
particular, we see we have additional inequalities
for two rank moments $\eta^{X}_{2k}(n)$ and $\eta^{X^\prime}_{2k}(n)$ 
that are compared against the same crank when 
\begin{align*}
	\frac{P_{X}(q) \aprod{q}{n}^2\beta^{X}_n}{(1-q^n)^2}
	-	
	\frac{P_{X^\prime}(q) \aprod{q}{n}^2\beta^{X^\prime}_n}{(1-q^n)^2}
\end{align*}
clearly has non-negative coefficients. We record the identities that yield such
results in the follow corollary and omit the identities that would lead to an 
inequality that is already present. While we could also use these identities 
to deduce strict inequalities between the relevant ordinary moments, we leave 
that as an exercise to the interested reader.

\begin{corollary}
For positive $k$, we have that
\begin{align*}
	\sum_{n=1}^\infty \left(\eta^{B2}_{2k}(n)-\eta_{2k}(n)\right)q^n
	&=
		 \sum\limits_{n_k \geq \dotsc \geq n_1 \geq 1} 
		 \frac{q^{n_1+\dotsb+n_k}   } 
		 {(q^{n_1})_{\infty}(1-q^{n_k})^{2}\dotsm(1-q^{n_2})^2 }
	,\\
	\sum_{n=1}^\infty \left(\eta^{B2}_{2k}(n)-\eta^{A3}_{2k}(n)\right)q^n
	&=
		 \sum\limits_{n_k \geq \dotsc \geq n_1 \geq 1} 
		 \frac{q^{2n_1+n_2+\dotsb+n_k}   } 
		 {(q^{n_1+1})_{\infty}(1-q^{n_k})^{2}\dotsm(1-q^{n_1})^2 }
		 \times
		 \left(
		 	\frac{1}{\aprod{q^{n_1+1}}{n_1}}-1
		 \right)
	,\\
	\sum_{n=1}^\infty \left(\eta^{B2}_{2k}(n)-\eta^{C2}_{2k}(n)\right)q^n
	&=
		 \sum\limits_{n_k \geq \dotsc \geq n_1 \geq 1} 
		 \frac{q^{2n_1+n_2+\dotsb+n_k}   } 
		 {(q^{n_1+1})_{\infty}(1-q^{n_k})^{2}\dotsm(1-q^{n_1})^2 }
		 \times
		 \left(
		 	\frac{1}{\aqprod{q}{q^2}{n_1}}-1
		 \right)
	,\\
	\sum_{n=1}^\infty \left(\eta_{2k}(n)-\eta^{A1}_{2k}(n)\right)q^n
	&=
		 \sum\limits_{n_k \geq \dotsc \geq n_1 \geq 1} 
		 \frac{q^{n_1+\dotsb+n_k}   } 
		 {(q^{n_1+1})_{\infty}(1-q^{n_k})^{2}\dotsm(1-q^{n_1})^2 }
		 \times
		 \left(
		 	\frac{1}{\aprod{q^{n_1+1}}{n_1}}-1
		 \right)
	,\\
	\sum_{n=1}^\infty \left(\eta_{2k}(n)-\eta^{C1}_{2k}(n)\right)q^n
	&=
		 \sum\limits_{n_k \geq \dotsc \geq n_1 \geq 1} 
		 \frac{q^{n_1+\dotsb+n_k}   } 
		 {(q^{n_1+1})_{\infty}(1-q^{n_k})^{2}\dotsm(1-q^{n_1})^2 }
		 \times
		 \left(
		 	\frac{1}{\aqprod{q}{q^2}{n_1}}-1
		 \right)
	,\\
	\sum_{n=1}^\infty \left(\eta^{A5}_{2k}(n)-\eta^{A3}_{2k}(n)\right)q^n
	&=
		 \sum\limits_{n_k \geq \dotsc \geq n_1 \geq 1} 
		 \frac{q^{n_1+\dotsb+n_k}} 
		 {(q^{n_1+1})_{n_1}(q^{n_1})_{\infty}(1-q^{n_k})^{2}\dotsm(1-q^{n_2})^2}
		 \sum_{j=0}^{n_1-2}q^{jn_1}
	,\\
	\sum_{n=1}^\infty \left(\eta^{A5}_{2k}(n)-\eta^{A7}_{2k}(n)\right)q^n
	&=
		 \sum\limits_{n_k \geq \dotsc \geq n_1 \geq 1} 
		 \frac{q^{n_1^2+n_2+\dotsb+n_k}   } 
		 {(q^{n_1+1})_{n_1}(q^{n_1})_{\infty}(1-q^{n_k})^{2}\dotsm(1-q^{n_2})^2}
	,\\
	\sum_{n=1}^\infty \left(\eta^{A3}_{2k}(n)-\eta^{A1}_{2k}(n)\right)q^n
	&=
		 \sum\limits_{n_k \geq \dotsc \geq n_1 \geq 1} 
		 \frac{q^{n_1+\dotsb +n_k}} 
		 {(q^{n_1+1})_{n_1}(q^{n_1})_{\infty}(1-q^{n_k})^{2}\dotsm(1-q^{n_2})^2 }
	,\\
	\sum_{n = 1}^{\infty}\left(\eta_{2k}^{A3}(n)-\eta_{2k}^{\X{42}}(n)\right)q^{n} 
	&= 
		\sum_{n_{k} \geq \dotsb \geq n_{1} \geq 1} 
		\frac{q^{2n_{1}+n_2 + \dotsb + n_{k}}\left( (-q^2;q^2)_{n_{1}-1}-1 \right)   }
		{(q^{n_{1}+1})_{n_{1}} (q^{n_{1}+1})_{\infty}
			(1-q^{n_{k}})^{2}\dotsm (1-q^{n_{1}})^2}
	,\\
	\sum_{n=1}^\infty \left(\eta^{A7}_{2k}(n)-\eta^{A1}_{2k}(n)\right)q^n
	&=
		 \sum\limits_{n_k \geq \dotsc \geq n_1 \geq 1} 
		 \frac{q^{n_1+\dotsb +n_k}} 
		 {(q^{n_1+1})_{n_1}(q^{n_1})_{\infty}(1-q^{n_k})^{2}\dotsm(1-q^{n_2})^2 }
		 \sum_{j=0}^{n_1-2}q^{jn_1}
	,\\
	\sum_{n = 1}^{\infty}\left(\eta_{2k}^{A1}(n)-\eta_{2k}^{\X{41}}(n)\right)q^{n} 
	&= 
		\sum_{n_{k} \geq \dotsb \geq n_{1} \geq 1} 
		\frac{q^{n_{1} + \dotsb + n_{k}}\left( (-q^2;q^2)_{n_{1}-1}-1 \right)   }
		{(q^{n_{1}+1})_{n_{1}} (q^{n_{1}+1})_{\infty}
			(1-q^{n_{k}})^{2}\dotsm (1-q^{n_{1}})^2}
	,\\
	\sum_{n = 1}^{\infty}\left(\eta_{2k}^{\X{42}}(n)-\eta_{2k}^{\X{41}}(n)\right)q^{n} 
	&= 
		\sum_{n_{k} \geq \dotsb \geq n_{1} \geq 1} 
		\frac{(-q^2;q^2)_{n_{1}-1} q^{n_{1} + \dotsb + n_{k}}}
		{(q^{n_{1}+1})_{n_{1}} (q^{n_{1}})_{\infty}
			(1-q^{n_{k}})^{2}\dotsm (1-q^{n_{2}})^2}
	,\\
	\sum_{n = 1}^{\infty}\left(\eta_{2k}^{\X{42}}(n)-\eta_{2k}^{\X{39}}(n)\right)q^{n} 
	&= 
		\sum_{n_{k} \geq \dotsb \geq n_{1} \geq 1} 
		\frac{(-q^2;q^2)_{n_{1}-1} q^{2n_{1} +n_2+ \dotsb + n_{k}}}
		{(q^{n_{1}+1})_{n_{1}} (q^{n_{1}+1})_{\infty}
			(1-q^{n_{k}})^{2}\dotsm (1-q^{n_{1}})^2}
	,\\
	\sum_{n = 1}^{\infty}\left(\eta^{C2}_{2k}(n)-\eta_{2k}^{C1}(n)\right)q^{n} 
    &= 
    	\sum_{n_{k} \geq \dotsb \geq n_{1} \geq 1}
    	\frac{q^{n_{1} + n_{2} + \dotsb + n_{k}}}
    	{(q;q^2)_{n_{1}} (q^{n_{1}})_{\infty}  (1-q^{n_{k}})^{2}\dotsm (1-q^{n_{2}})^{2}}
	,\\
	\sum_{n = 1}^{\infty}\left(\eta_{2k}^{\X{39}}(n)-\eta_{2k}^{\X{38}}(n)\right)q^{n} 
	&= 
		\sum_{n_{k} \geq \dotsb \geq n_{1} \geq 1} 
		\frac{(-1;q^{2})_{n_{1}}q^{n_{1} + \dotsb + n_{k}}}
		{(q^{n_{1} + 1})_{n_{1}} (q^{n_{1}})_{\infty} (1-q^{n_{k}})^{2}\dotsm (1-q^{n_{2}})^2}
	,\\
	\sum_{n = 1}^{\infty}\left(\eta_{2k}^{\X{41}}(n)-\eta_{2k}^{\X{38}}(n)\right)q^{n} 
	&= 
		\sum_{n_{k} \geq \dotsb \geq n_{1} \geq 1} 
		\frac{(-q^2;q^2)_{n_{1}-1} q^{n_{1} + \dotsb + n_{k}}}
		{(q^{n_{1}+1})_{n_{1}} (q^{n_{1}+1})_{\infty}
			(1-q^{n_{k}})^{2}\dotsm (1-q^{n_{1}})^2}
	,\\
	\sum_{n = 1}^{\infty}\left(\eta^{C5}_{2k}(n)-\eta_{2k}^{C1}(n)\right)q^{n} 
    &= 
    	\sum_{n_{k} \geq \dotsb \geq n_{1} \geq 1}
    	\frac{q^{n_{1} + n_{2} + \dotsb + n_{k}} }
    	{ (q^{n_{1}})_{\infty}  (1-q^{n_{k}})^{2}\dotsm (1-q^{n_{2}})^{2}}    	
    	\times
    	\frac{(1-q^{\frac{n_1(n_1-1)}{2}})}{(q;q^2)_{n_{1}}(1-q^{n_1})}
	,\\
	\sum_{n = 1}^{\infty}\left(\eta^{C1}_{2k}(n)-\eta_{2k}^{L5}(n)\right)q^{n} 
    &= 
    	\sum_{n_{k} \geq \dotsb \geq n_{1} \geq 1}
    	\frac{q^{n_{1} + \dotsb + n_{k}}\left( \aprod{-1}{n_1}-1  \right)}
    	{(q;q^2)_{n_{1}} (q^{n_{1}+1})_{\infty}  (1-q^{n_{k}})^{2}\dotsm (1-q^{n_{1}})^{2}}
	,\\
	\sum_{n=1}^\infty \left(\eta^{J3}_{2k}(n)-\eta^{J2}_{2k}(n)\right)q^n
	&=
    	\sum_{n_{k} \geq \dotsb \geq n_{1} \geq 1}
    	\frac{q^{n_{1} + n_{2} + \dotsb + n_{k}}}
    	{(q)_{n_{1}-1} (q^{n_{1}+1})_{n_{1}} (q^{3n_1};q^3)_\infty (q^{n_{1}})_{\infty}
    		(1-q^{n_{k}})^{2}\dotsm(1-q^{n_{2}})^{2} }
	,\\	
	\sum_{n=1}^\infty \left(\eta^{J2}_{2k}(n)-\eta^{J1}_{2k}(n)\right)q^n
	&=
    	\sum_{n_{k} \geq \dotsb \geq n_{1} \geq 1}
    	\frac{q^{2n_{1} + n_{2} + \dotsb + n_{k}}}
    	{(q)_{2n_{1}} (q^{3n_1};q^3)_\infty (q^{n_{1}})_{\infty}
    		(1-q^{n_{k}})^{2}\dotsm(1-q^{n_{2}})^{2}}
	,\\
	\sum_{n=1}^\infty \left(\eta^{E4}_{2k}(n)-\overline{\eta}_{2k}(n)\right)q^n
	&=
		 \sum\limits_{n_k \geq \dotsc \geq n_1 \geq 1} 
		 \frac{\aprod{-q^{n_1+1}}{\infty}  q^{n_1+\dotsb+n_k}   } 
		 {(q^{n_1})_{\infty} (1-q^{n_k})^{2}\dotsm(1-q^{n_2})^2 }
	,\\
	\sum_{n = 1}^{\infty}\left(\overline{\eta}_{2k}(n)-\eta_{2k}^{\X{46}}(n)\right)q^{n} 
	&= 
		\sum_{n_{k} \geq \dotsb \geq n_{1} \geq 1} 
		\frac{(-q^{n_{1} + 1})_{\infty} q^{n_1+ \dotsb + n_k}}
		{(q^{n_1+1})_{\infty} (1-q^{n_{k}})^{2} \dotsm (1-q^{n_{1}})^{2}}
		\times
		\left(\frac{(-q^3;q^3)_{n_{1}-1} }{(q^{n_{1} + 1})_{n_{1}-1} }-1\right)
	,\\
	\sum_{n = 1}^{\infty}\left(\overline{\eta}_{2k}(n)-\eta_{2k}^{I14}(n)\right)q^{n} 
	&= 
		\sum_{n_{k} \geq \dotsb \geq n_{1} \geq 1} 
		\frac{(-q^{n_{1}+1})_{\infty} q^{n_{1} + \cdots{} + n_{k}}}
		{(q^{n_{1} + 1})_{\infty}
			(1-q^{n_{k}})^{2}\dotsm (1-q^{n_{1}})^{2}}
		\\&\quad
		\times
		\left(
		\frac{(-q^2;q^2)_{n_{1}-1}(1+q^{n_1})  }{(q;q^{2})_{n_{1}} } - 1
		\right)	
	,\\		
	\sum_{n = 1}^{\infty}\left(\eta^{G3}_{2k}(n)- \eta_{2k}^{G1}(n)\right)q^{n} 
    &=  
    	\sum\limits_{n_k \geq \dotsb \geq n_1 \geq 1} 
    	\frac{(-q^{2n_1+1};q^2)_\infty q^{2n_1+\dotsb +2n_k}} 
    	{(q^4;q^4)_{n_1} (q^{2n_1+2};q^2)_{\infty} (q^{2n_1};q^2)_{\infty}
    		(1-q^{2n_k})^2\dotsm (1-q^{2n_2})^2}
	,\\
	\sum_{n = 1}^{\infty}\left(\eta_{2k}^{Y2}(n) - \eta_{2k}^{Y1}(n)\right)q^{n} 
	&= 
		\sum_{n_{k} \geq \dotsb \geq n_{1} \geq 1}
		\frac{q^{n_{1}^2 + 2n_{2} + \dotsb + 2n_{k}}}
		{(q;q^2)_{n_{1}} (q^4;q^4)_{n_{1}}  (q^{2n_{1} + 2} ; q^2)_{\infty}(q^{2n_{1}} ; q^2)_{\infty} }
		\\&\quad
		\times
		\frac{1}{(1-q^{2n_{k}})^{2} \dotsm (1-q^{2n_{2}})^2}
	,\\
	\sum_{n = 1}^{\infty}\left(\eta_{2k}^{Y2}(n) - \eta_{2k}^{Y3}(n)\right)q^{n} 
	&= 
		\sum_{n_{k} \geq \dotsb \geq n_{1} \geq 1}
		\frac{q^{2n_{1} + \dotsb + 2n_{k}}}
		{(q^4;q^4)_{n_{1}}  (q^{2n_{1}} ; q^2)_{\infty} (q^{2n_{1} + 2} ; q^2)_{\infty} 
			(1-q^{2n_{k}})^{2} \dotsm (1-q^{2n_{2}})^2}
		\\&\quad
		\times 
		\frac{(1-q^{n_1^2})}
		{\aqprod{q}{q^2}{n_1}(1-q^{2n_1})}
	,\\	
	\sum_{n = 1}^{\infty}\left(\eta_{2k}^{Y4}(n) - \eta_{2k}^{Y3}(n)\right)q^{n} 
	&= 
		\sum_{n_{k} \geq \dotsb \geq n_{1} \geq 1}
		\frac{q^{2n_{1} + \dotsb + 2n_{k}}}
		{(q;q^2)_{n_{1}} (q^4;q^4)_{n_{1}}  (q^{2n_{1} + 2} ; q^2)_{\infty}(q^{2n_{1}} ; q^2)_{\infty}}
		\\&\quad
		\times
		\frac{1}{(1-q^{2n_{k}})^{2} \dotsm (1-q^{2n_{2}})^2}		
	.		
\end{align*}
Here $\overline{\eta}_{2k}$ is the symmetrized overpartition rank moment from
\cite{JenningsShaffer3} and satisfies
\begin{align*}
	\sum_{n = 1}^{\infty}\left(\overline{\mu}_{2k}(n)-\overline{\eta}_{2k}(n)\right)q^{n} 
	&= 
		\sum_{n_{k} \geq \dotsb \geq n_{1} \geq 1} 
		\frac{(-q^{n_{1}+1})_{\infty} q^{n_{1} + \cdots{} + n_{k}}}
		{(q^{n_{1} + 1})_{\infty} (1-q^{n_{k}})^{2}\dotsm (1-q^{n_{1}})^{2}}
.	
\end{align*}
\end{corollary}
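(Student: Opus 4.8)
The plan is to derive every identity directly from Theorem~\ref{TheoremMain}, and in fact from the series already written down in Corollary~\ref{CorollarySeriesIdentities}. The key point is the observation recorded just before the statement: if $X$ and $X'$ are two of the Bailey pairs appearing in Corollary~\ref{CorollarySeriesIdentities} that are compared against the same crank-like function, then they share the same prefactor $P_X(q)=P_{X'}(q)$ and the same crank-moment generating function appearing on the right side of the definition in \eqref{EqMainTheoremSymmetrizexRankCrankDifference}. Subtracting the two instances of \eqref{EqMainTheoremSymmetrizexRankCrankDifference}, the common crank part cancels and we are left with
\begin{align*}
	\sum_{n=1}^\infty\left(\eta^{X'}_{2k}(n)-\eta^{X}_{2k}(n)\right)q^n
	&=
		\sum_{n=1}^\infty\hspt{X}{k}{n}q^n-\sum_{n=1}^\infty\hspt{X'}{k}{n}q^n
	\\
	&=
		P_X(q)\sum_{n_k\geq\dotsb\geq n_1\geq 1}
		\frac{\aprod{q}{n_1}^2\left(\beta^{X}_{n_1}-\beta^{X'}_{n_1}\right)q^{n_1+\dotsb+n_k}}
		{(1-q^{n_1})^2\dotsm(1-q^{n_k})^2}
	,
\end{align*}
which is exactly the expression displayed just before the corollary. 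So the proof amounts to writing this down for each listed pair and simplifying; for the three differences that involve the ordinary rank moment $\eta_{2k}$ or the overpartition rank moment $\overline{\eta}_{2k}$, I would take the second Bailey pair to be the one producing Garvan's rank, respectively the Dyson rank for overpartitions, using Garvan's generating function quoted in the introduction and the identity for $\overline{\mu}_{2k}-\overline{\eta}_{2k}$ from \cite{JenningsShaffer3} that is restated at the end of the statement for precisely this purpose.

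The remaining work is elementary $q$-Pochhammer bookkeeping, done case by case. In each listed pair, $\beta^{X}_{n_1}$ and $\beta^{X'}_{n_1}$ have a common denominator built from $q$-Pochhammer symbols (one of $\aprod{q}{2n_1}$, $\aqprod{q}{q^2}{n_1}\aprod{q}{n_1}$, $\aqprod{q^4}{q^4}{n_1}\aqprod{q}{q^2}{n_1}$, and similar), so that $\beta^{X}_{n_1}-\beta^{X'}_{n_1}$ equals that denominator divided into a difference of two monomials in $q$, which is sometimes packaged as a single monomial times a factor like $\frac{1}{\aprod{q^{n_1+1}}{n_1}}-1$, $(-q^2;q^2)_{n_1-1}-1$, or $\aprod{-1}{n_1}-1$. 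I would then use the relations $\aprod{q}{n_1}=\aprod{q}{n_1-1}(1-q^{n_1})$ and $\aprod{q}{2n_1}=\aprod{q}{n_1}\aprod{q^{n_1+1}}{n_1}$ to cancel one copy of $\aprod{q}{n_1}$ from $\aprod{q}{n_1}^2$ against the denominator of the $\beta$-difference, absorb the surviving finite product into $P_X(q)$ so that an infinite product such as $\aprod{q}{\infty}$ turns into $\aprod{q^{n_1}}{\infty}$ or $\aprod{q^{n_1+1}}{\infty}$, and cancel powers of $1-q^{n_1}$. When the difference of monomials carries an explicit factor $1-q^{n_1}$, as for $B2$ against the ordinary rank or $C2$ against $C1$, one full factor $(1-q^{n_1})^2$ drops out and the surviving denominator runs only over $n_2,\dotsc,n_k$; otherwise the $(1-q^{n_1})^2$ is retained. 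In the two cases $C5$ against $C1$ and $Y2$ against $Y3$, the relevant difference of monomials does not factor over $\mathbb{Z}[q]$ in an elementary way, and the factor $1-q^{n_1(n_1-1)/2}$, respectively $1-q^{n_1^2}$, is simply left in place, exactly as in the statement.

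I expect the main obstacle to be organizational rather than conceptual: there are more than two dozen pairs, and in each one must track the exact monomial $q^{n_1+\dotsb+n_k}$, $q^{2n_1+n_2+\dotsb+n_k}$, $q^{n_1^2+n_2+\dotsb+n_k}$, and so forth, which is inherited from the power of $q$ already multiplying $\beta^{X}_{n_1}$ (equivalently, from the numerator of the corresponding $\hspt{X}{k}{n}$ series in Corollary~\ref{CorollarySeriesIdentities}), and then factor the resulting difference of two monomials correctly. A convenient consistency check is provided by the relations $\hspt{\X{38}}{k}{n}=2\hspt{\X{41}}{k}{n}$ and $\hspt{\X{39}}{k}{n}=2\hspt{\X{42}}{k}{n}$ noted after Corollary~\ref{CorollarySeriesIdentities}, which force the corresponding differences to collapse to a single $\hspt{}{k}{n}$ series already computed. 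No ingredient beyond Theorem~\ref{TheoremMain} and routine $q$-series manipulation is used, apart from the quoted identity of \cite{JenningsShaffer3} for $\overline{\mu}_{2k}-\overline{\eta}_{2k}$, which appears as the final displayed line of the statement.
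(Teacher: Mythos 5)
Your proposal is correct and matches the paper's own justification: the corollary is obtained exactly by subtracting, for two Bailey pairs sharing the same $P_X(q)$ and crank, the two instances of (\ref{EqMainTheoremSymmetrizexRankCrankDifference}) so that the crank part cancels and one is left with the series involving $\aprod{q}{n_1}^2\bigl(\beta^{X}_{n_1}-\beta^{X'}_{n_1}\bigr)$, followed by the routine Pochhammer simplifications you describe (the cases involving $\eta_{2k}$ and $\overline{\eta}_{2k}$ using Garvan's identity and the quoted identity from \cite{JenningsShaffer3}). This is essentially the same argument the paper intends in the paragraph preceding the corollary, so no further comment is needed.
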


\section{The Proof of Theorem \ref{TheoremMain}}

\begin{lemma}\label{LemmaRankBilateralAndDerivative}
Suppose $\alpha_n$ is a sequence such that $\alpha_{n} = \alpha_{-n}$, then 
\begin{align*}
	\sum_{n = 1}^{\infty} \frac{\alpha_{n}q^n(1-z)(1-z^{-1})}{(1-zq^n)(1-z^{-1}q^n)}
	&=
		\sum_{n \neq 0}\frac{\alpha_{n}q^n(1-z)}{(1+q^{n})(1-zq^{n})}
	.
\end{align*}
Furthermore, if $j$ is a positive integer, then
\begin{align*}
	\left(\frac{\partial}{\partial{}z}\right)^j 
	\sum_{n = 1}^{\infty} \frac{\alpha_{n}q^n(1-z)(1-z^{-1})}{(1-zq^n)(1-z^{-1}q^n)}
	&= 
		-j!\sum_{n \neq 0}
		\frac{\alpha_{n}q^{nj}(1-q^n)}{(1+q^n)(1-zq^n)^{j+1}}
	.
\end{align*}
\end{lemma}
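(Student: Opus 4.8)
The plan is to prove both identities by pairing the summation index $n$ with $-n$ and reducing everything to an elementary partial-fraction computation; there is essentially no obstacle here beyond careful bookkeeping with $q^{\pm n}$.

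First I would establish the first identity term by term. Fix $n\geq 1$. On the right-hand side, combine the two summands with indices $n$ and $-n$, using $\alpha_{-n}=\alpha_n$: after multiplying numerator and denominator of the $(-n)$-term by $q^{2n}$ (so that $1+q^{-n}$ becomes $q^{-n}(1+q^n)$ and $1-zq^{-n}$ becomes $q^{-n}(q^n-z)$), that term equals $\dfrac{\alpha_n q^n(1-z)}{(1+q^n)(q^n-z)}$, and hence
\begin{align*}
	\frac{\alpha_n q^n(1-z)}{(1+q^n)(1-zq^n)}
	+
	\frac{\alpha_{-n} q^{-n}(1-z)}{(1+q^{-n})(1-zq^{-n})}
	&=
	\frac{\alpha_n q^n(1-z)}{1+q^n}\left(\frac{1}{1-zq^n}+\frac{1}{q^n-z}\right)
	=
	\frac{\alpha_n q^n(1-z)^2}{(1-zq^n)(q^n-z)}
	,
\end{align*}
since the bracketed sum has numerator $(q^n-z)+(1-zq^n)=(1+q^n)(1-z)$. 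On the left-hand side, writing $1-z^{-1}=-(1-z)/z$ and $1-z^{-1}q^n=(z-q^n)/z$ turns the $n$-th summand into $\dfrac{-\alpha_n q^n(1-z)^2}{(1-zq^n)(z-q^n)}$, which is the same expression. Summing over $n\geq 1$ gives the first identity.

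For the derivative identity I would first invoke the first identity to replace the left-hand side by the bilateral sum $\sum_{n\neq 0}\frac{\alpha_n q^n(1-z)}{(1+q^n)(1-zq^n)}$, then differentiate termwise. The only computation is $\left(\partial/\partial z\right)^j\frac{1-z}{1-zq^n}$ for $j\geq 1$: from $\partial_z\frac{1-z}{1-zq^n}=\dfrac{-(1-q^n)}{(1-zq^n)^2}$ and $\partial_z^{\,m}(1-zq^n)^{-2}=(m+1)!\,q^{mn}(1-zq^n)^{-(m+2)}$, iterating with $m=j-1$ gives $\left(\partial/\partial z\right)^j\frac{1-z}{1-zq^n}=-j!\,(1-q^n)q^{(j-1)n}(1-zq^n)^{-(j+1)}$. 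Multiplying by $\frac{\alpha_n q^n}{1+q^n}$ and summing over $n\neq 0$ produces $-j!\sum_{n\neq 0}\frac{\alpha_n q^{nj}(1-q^n)}{(1+q^n)(1-zq^n)^{j+1}}$, as claimed.

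The only point requiring any care is the meaning of the bilateral sum $\sum_{n\neq 0}$, i.e.\ that the manipulations involving $q^{-n}$ are legitimate; but the identities are read either as formal statements about the displayed series or as identities of meromorphic functions of $z$ for $|q|<1$, and in both readings the termwise pairing and termwise differentiation are justified, so I do not anticipate any genuine obstacle.
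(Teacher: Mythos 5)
Your proposal is correct and follows essentially the same route as the paper: the first identity rests on the same $\tfrac{1}{1+q^n}$ partial-fraction structure combined with the symmetry $\alpha_n=\alpha_{-n}$ (you merely pair the $\pm n$ terms of the bilateral sum instead of splitting the left-hand summand and reindexing), and the second identity is obtained exactly as in the paper, by termwise differentiation using $\left(\partial/\partial z\right)^j\frac{1-z}{1-zq^n}=\frac{-j!\,(1-q^n)q^{n(j-1)}}{(1-zq^n)^{j+1}}$.
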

\begin{proof}
We note the first identity is the standard rearrangements used
for many rank functions. 
We have that
\begin{align*}
	\sum_{n = 1}^{\infty} 
	\frac{\alpha_{n}q^n(1-z)(1-z^{-1})}{(1-zq^n)(1-z^{-1}q^n)} 
	&= 
		\sum_{n = 1}^{\infty} 
		\frac{\alpha_{n}q^n}{1 + q^n}
		\left(\frac{1-z}{1-zq^n} + \frac{1-z^{-1}}{1-z^{-1}q^n}\right) 
	\\
	&= 
		\sum_{n = 1}^{\infty} 
		\frac{\alpha_{n}q^n(1-z)}{(1 + q^n)(1-zq^n)} 
		+ 
		\sum_{n = -\infty}^{-1}
		\frac{\alpha_{-n}q^{-n}(1-z^{-1})}{(1 + q^{-n})(1-z^{-1}q^{-n})} 
	\\
	&= 
		\sum_{n = 1}^{\infty} 
		\frac{\alpha_{n}q^n(1-z)}{(1 + q^n)(1-zq^n)} 
		+ 
		\sum_{n = -\infty}^{-1}
		\frac{\alpha_{n}q^n(1-z)}{(1 + q^{n})(1-zq^{n})} 
	\\
	&=
		\sum_{n \neq 0}\frac{\alpha_{n}q^n(1-z)}{(1+q^{n})(1-zq^{n})}
	.
\end{align*}
This establishes the first identity. The second identity follows from the first,
upon noting that 
\begin{align*}
	\left(\frac{\partial}{\partial{}z}\right)^j 
	\frac{1-z}{1-zq^n}
	&=
	\frac{-j!(1-q^n)q^{n(j-1)}}{(1-zq^n)^{j+1}}	
.
\end{align*}	
\end{proof}

\begin{lemma}\label{LemmaSymmetrizedMoments}
Suppose $\alpha_n$ is a sequence such that $\alpha_{n} = \alpha_{-n}$ and
\begin{align*}
	R_{X}(z,q)
	:=
	P_{X}(q)
	\left(
		1
		+
		\sum_{n = 1}^{\infty} \frac{\alpha_{n}q^n(1-z)(1-z^{-1})}{(1-zq^n)(1-z^{-1}q^n)}
	\right)	
	&=
	\sum_{n=0}^\infty \sum_{m=-\infty}^\infty
		N_{X}(m,n) z^mq^n
	,
\end{align*}
where $P_{X}(q)$ is some series in $q$.
Let
\begin{align*}
	\eta^{X}_{k}(n)
	&= 
		\sum_{m = -\infty}^{\infty}
		\binom{m+\lfloor\frac{k-1}{2}\rfloor}{k}N_{X}(m,n)
	.
\end{align*}
Then $\eta^{X}_{2k+1}(n)=0$ and 
\begin{align*}
	\sum_{n=1}^\infty \eta^X_{2k}(n) q^n
	&=
		-P_X(q)
		\sum_{n=1}^\infty
		\frac{\alpha_{n}q^{nk}}{(1-q^n)^{2k}}
	.
\end{align*}	
\end{lemma}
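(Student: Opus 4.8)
The plan is to read the symmetrized moments directly off the bilateral Laurent expansion of $R_X(z,q)$ in $z$, so that no honest differentiation in $z$ is needed.

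I would begin with two elementary observations. The defining expression for $R_X(z,q)$ is manifestly invariant under $z\mapsto z^{-1}$, hence $N_X(m,n)=N_X(-m,n)$. Moreover $\binom{m+k}{2k+1}=\frac{1}{(2k+1)!}\prod_{i=-k}^{k}(m-i)$ is an odd polynomial in $m$, its zero set being symmetric about the origin and its degree odd; since $\lfloor\frac{(2k+1)-1}{2}\rfloor=k$, this gives $\eta^X_{2k+1}(n)=\sum_m\binom{m+k}{2k+1}N_X(m,n)=0$ by the symmetry of $N_X$. That settles the odd part.

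For the even part I would expand each summand of the rank sum using $\frac{1}{(1-zq^n)(1-z^{-1}q^n)}=\sum_{a,b\ge0}q^{n(a+b)}z^{a-b}=\frac{1}{1-q^{2n}}\sum_{m\in\mathbb Z}q^{n|m|}z^m$; multiplying by $q^n(2-z-z^{-1})$ yields
\[
	\frac{q^n(1-z)(1-z^{-1})}{(1-zq^n)(1-z^{-1}q^n)}
	=\frac{2q^n}{1+q^n}-\frac{1-q^n}{1+q^n}\sum_{m\ne0}q^{n|m|}z^m
\]
(this is also immediate from the splitting in Lemma~\ref{LemmaRankBilateralAndDerivative}). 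Comparing coefficients of $z^m$ in $R_X(z,q)$ for $m\ne0$ gives $\sum_{n'\ge0}N_X(m,n')q^{n'}=-P_X(q)\sum_{n\ge1}\frac{\alpha_n(1-q^n)}{1+q^n}q^{n|m|}$, and the $m=0$ coefficient never enters because $\binom{k-1}{2k}=0$ for $k\ge1$. Therefore
\[
	\sum_{n\ge1}\eta^X_{2k}(n)q^n
	=\sum_{m\ne0}\binom{m+k-1}{2k}\sum_{n'}N_X(m,n')q^{n'}
	=-P_X(q)\sum_{n\ge1}\frac{\alpha_n(1-q^n)}{1+q^n}\sum_{m\ne0}\binom{m+k-1}{2k}q^{n|m|},
\]
the interchange of the $m$- and $n$-sums being legitimate since for each power of $q$ only finitely many pairs $(m,n)$ contribute.

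It then remains to compute the inner sum $\sum_{m\ne0}\binom{m+k-1}{2k}q^{n|m|}$. Folding $m\mapsto-m$ and using the polynomial identity $\binom{-m+k-1}{2k}=\binom{m+k}{2k}$ (both are degree-$2k$ polynomials in $m$ with leading coefficient $1/(2k)!$ and common zero set $\{-k,\dots,k-1\}$), this equals $\sum_{m\ge1}\bigl(\binom{m+k-1}{2k}+\binom{m+k}{2k}\bigr)q^{nm}$; since $\binom{m+k-1}{2k}$ vanishes for $1\le m\le k$ and $\binom{m+k}{2k}$ for $1\le m\le k-1$, the elementary series $\sum_{j\ge0}\binom{j+2k}{2k}x^j=(1-x)^{-2k-1}$ evaluates the two pieces to $\frac{q^{n(k+1)}}{(1-q^n)^{2k+1}}$ and $\frac{q^{nk}}{(1-q^n)^{2k+1}}$, whence the inner sum is $\frac{q^{nk}(1+q^n)}{(1-q^n)^{2k+1}}$. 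Substituting back, the factor $(1+q^n)$ cancels and one factor $(1-q^n)$ lowers the exponent, leaving exactly $-P_X(q)\sum_{n\ge1}\frac{\alpha_nq^{nk}}{(1-q^n)^{2k}}$, which is the asserted identity. I expect the only points needing care to be the bookkeeping with the bilateral sums — handled by working throughout in $\mathbb Z[[q]]$, where each $q$-degree involves only finitely many terms — and the verification of $\binom{-m+k-1}{2k}=\binom{m+k}{2k}$; the rest is routine.
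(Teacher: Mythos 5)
Your proposal is correct, and it reaches the lemma by a genuinely different route from the paper. The paper follows Andrews and Garvan: it represents the symmetrized moments as $\sum_{n\ge 1}\eta^X_{2k}(n)q^n=\frac{1}{(2k)!}\left(\frac{\partial}{\partial z}\right)^{2k}z^{k-1}R_X(z,q)\big|_{z=1}$, applies the differentiation formula of Lemma \ref{LemmaRankBilateralAndDerivative} term by term, and collapses the resulting Leibniz sum with the binomial theorem via $\sum_{j=0}^{k-1}\binom{k-1}{j}\bigl(q^{-n}(1-q^n)\bigr)^j=\bigl(1+q^{-n}(1-q^n)\bigr)^{k-1}$; the odd-moment vanishing is delegated to Andrews' argument, which is the same parity observation you make. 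You instead never differentiate: you extract the coefficient of $z^m$ directly from the bilateral expansion $\frac{q^n(1-z)(1-z^{-1})}{(1-zq^n)(1-z^{-1}q^n)}=\frac{2q^n}{1+q^n}-\frac{1-q^n}{1+q^n}\sum_{m\neq 0}q^{n|m|}z^m$, pair $m$ with $-m$ using $\binom{-m+k-1}{2k}=\binom{m+k}{2k}$, and evaluate $\sum_{m\neq 0}\binom{m+k-1}{2k}q^{n|m|}=\frac{q^{nk}(1+q^n)}{(1-q^n)^{2k+1}}$ by the negative binomial series, after which the factor $(1+q^n)$ cancels exactly as needed; I checked these identities and the vanishing ranges of the binomial coefficients, and they are right, as is your observation that the $m=0$ column is harmless because $\binom{k-1}{2k}=0$. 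What your approach buys is self-containedness and purely formal bookkeeping: it avoids the symmetrized-moment derivative representation (itself a small lemma one must justify) and the term-by-term differentiation at $z=1$, and it makes transparent that everything happens coefficientwise in $\mathbb{Z}[[q]]$, where $N_X(m,N)=0$ for $|m|>N$ so all interchanges are finite. What the paper's route buys is brevity given the existing machinery — it reuses Lemma \ref{LemmaRankBilateralAndDerivative} and can cite Andrews' Theorems 1 and 2 nearly verbatim — and it keeps the exposition parallel to \cite{Andrews4, Garvan1}, which is convenient for the later appeals to Garvan's Theorems 3.3 and 4.3.
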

\begin{proof}
We note this is the generalization of Theorems 1 and 2 of \cite{Andrews4}.
The proof that $\eta^{X}_{2k+1}=0$ follows that of Theorem 1 of \cite{Andrews4}
verbatim, as we have $N_X(m,n)=N_X(-m,n)$ due to the symmetry in $z$ and 
$z^{-1}$. For the even moments, much the same as in the proof of Theorem 2 from 
\cite{Andrews3}, by Lemma \ref{LemmaRankBilateralAndDerivative} we have that
\begin{align*}
	\sum\limits_{n=1}^\infty \eta^X_{2k}(n)q^n 
    &= 
    	\frac{1}{(2k)!}
    	\left(\frac{\partial}{\partial{}z}\right)^{2k}
    	z^{k-1}R_X(z,q) \Bigr|_{z = 1} 
    \\
	&= 
		\frac{1}{(2k)!}
		\sum_{j = 0}^{k-1}
		\binom{2k}{j}(k-1)\cdots{}(k-j)R_X^{(2k-j)}(1,q) 
	\\
	&= 
		-P_X(q)
		\sum_{j=0}^{k-1} \binom{k-1}{j}
		\sum_{n \neq 0}
		\frac{\alpha_{n}q^{n(2k-j)}(1-q^n)}{(1+q^n)(1-q^n)^{2k-j+1}} 
	\\
	&= 
		-P_X(q)
		\sum_{n \neq 0}\frac{\alpha_{n}q^{2nk}}{(1 - q^n)^{2k}(1+q^n)}
		\sum_{j = 0}^{k-1}\binom{k-1}{j}(q^{-n}(1 - q^{n}))^{j} 
	\\
	&= 
		-P_X(q)
		\sum_{n \neq 0}
		\frac{\alpha_{n}q^{2nk}}{(1 - q^n)^{2k}(1+q^n)}( 1 + q^{-n}(1 - q^{n}))^{k - 1} 
	\\
	&= 
		-P_X(q)
		\sum_{n \neq 0}
		\frac{\alpha_{n}q^{n + nk}}{(1 - q^n)^{2k}(1+q^n)} 
	\\
	&= 
		-P_X(q)
		\sum_{n = 1}^{\infty}
		\frac{\alpha_{n}q^{nk}}{(1-q^n)^{2k}}
	.
\end{align*}
\end{proof}
With the above lemma, we have established (\ref{EqMainTheoremSymmetrizedRankMoment})
of Theorem \ref{TheoremMain}. We then find 
(\ref{EqMainTheoremSymmetrizexRankCrankDifference}) follows immediately from 
equation (3.3) and Theorem 3.3 of \cite{Garvan1}, both of which we state for 
completeness.
Equation (3.3) is that for positive $k$,
\begin{align*}
	\sum_{n=1}^\infty \mu_{2k}(n)q^n
	&=
	\frac{1}{\aprod{q}{\infty}}
	\sum_{n_k\geq \dotsb \geq n_1\geq 1}
	\frac{q^{n_1+\dotsb+n_k}}{(1-q^{n_k})^2\dotsm (1-q^n_1)^2}	
.
\end{align*}
Theorem 3.3 states that if $\alpha_{n}$ and $\beta_{n}$ are a Bailey pair 
relative to $(1,q)$ and  $\alpha_{0} = \beta_{0} = 1$. Then
\begin{align*}
	&\sum_{n_{k} \geq \dotsb \geq n_{1} \geq 1} 
	\frac{(q )_{n_{1}}^{2} q^{n_{1} + \dotsb + n_{k}}\beta_{n_{1}}}
	{(1-q^{n_{k}})^{2}(1-q^{n_{k-1}})^{2} \dotsm (1-q^{n_{1}})^{2}} 
	\\
	&= 
		\sum_{n_{k} \geq \dotsb \geq n_{1} \geq 1} 
		\frac{q^{n_{1} + \dotsb + n_{k}}}
		{(1-q^{n_{k}})^{2}(1-q^{n_{k-1}})^{2} \dotsm (1-q^{n_{1}})^{2}} 
		+ 
		\sum_{r = 1}^{\infty}\frac{q^{kr}\alpha_{r}}{(1-q^{r})^{2k}}
	.
\end{align*}
Lastly, for (\ref{EqMainTheoremSymmetrizedRankEquality}) and
(\ref{EqMainTheoremOrdinaryRankEquality}), we find that one can follow verbatim 
the proof of Theorem 4.3 of \cite{Garvan1}, as the only requirement is that
$N_X(m,n)=N_{X}(-m,n)$. Thus Theorem \ref{TheoremMain} follows.

\section{Combinatorial Interpretations}

To determine what the functions $\hspt{X}{2k}{n}$ are counting, we use the weight
from \cite{Garvan1} as extended to vector partitions in \cite{JenningsShaffer3}.
We recall that a vector partition of $n$ is a vector of partitions, 
$(\pi_1,\dotsc,\pi_r)$, such that altogether the parts sum to $n$. When
$\vec{\pi}$ is a vector partition of $n$, we write $|\vec{\pi}|=n$.
For a partition $\pi$ with different parts $n_1 < n_2 < \cdots{} < n_m$, we 
take $f_{j}(\pi)$ to be the frequency of the part $n_j$.
Given a vector partition $\vec{\pi}=(\pi_1,\pi_2,\dotsc,\pi_r)$,
we let $f^1_{j} := f^1_{j}(\vec{\pi}) = f_j(\pi_1)$,
and have the weight $\omega_k(\vec{\pi})$ given by
\begin{align*}
	\omega_{k}(\vec{\pi}) 
	&:= 
		\sum_{\substack{m_1 + m_2 + \cdots{} + m_r = k \\ 1 \leq r \leq k}}
		\binom{f_{1}^{1} + m_1 - 1}{2m_1 - 1} 
    	\sum_{2 \leq j_2 < j_3 < \cdots < j_r}
    	\binom{f_{j_2}^{1} + m_2}{2m_2}\binom{f_{j_3}^{1} + m_3}{2m_3}\cdots{}\binom{f_{j_r}^{1} + m_r}{2m_r}
.
\end{align*}
We note that the first sum is over all compositions of $k$.
Based on the combinatorial interpretations in \cite{Garvan1, JenningsShaffer3}, 
we expect to find that $\hspt{X}{2k}{n}$ is the number of vector partitions
$\vec{\pi}=(\pi_1,\pi_2,\dotsc,\pi_r)$, weighted by $\omega_{k}$, where
$(\pi_2,\dotsc,\pi_r)$ are restricted to the vector partitions enumerated
by $\aprod{q^{n_1+1}}{\infty}\aprod{q}{n_1}^2 P_X(q) \beta_{n_1}$, where $n_1$ is the smallest 
part of $\pi_1$. Upon minor adjustments for the cases where 
$q\rightarrow q^2$ and $q\rightarrow q^4$, this is correct. In a small number 
of cases one of the partitions $\pi_k$, for $k\geq 2$, strictly speaking
may not be a partition in that we will allow a non-positive part to appear.

\begin{definition}
We define the follows sets of vector partitions. In all cases we require that
$\pi_1$ be non-empty.
\begin{itemize}
\item $S^{A1}$ - the set of vector partitions $(\pi_1, \pi_2)$ 
where $\pi_1$ has smallest part $n$ and the parts $m$ of $\pi_2$ 
satisfy $n+1 \leq m \leq 2n$. 
        
\item $S^{A3}$ - the set of vector partitions $(\pi_1, \pi_2)$ 
where $\pi_1$ has smallest part $n$; the parts $m$ of $\pi_2$ satisfy 
$n \leq m \leq 2n$; and the part $n$ appears exactly once in $\pi_2$.
    
\item $S^{A5}$ - the set of vector partitions $(\pi_1, \pi_2)$ 
where $\pi_1$ has smallest part $n$; the parts $m$ of $\pi_2$ satisfy 
$n\leq m \leq 2n$; and the part $n$ appears exactly $n$ times in $\pi_2$.
    
\item $S^{A7}$ - the set of vector partitions $(\pi_1, \pi_2)$ 
where $\pi_1$ has smallest part $n$; the parts $m$ of $\pi_2$ satisfy 
$n \leq m \leq 2n$; and the part $n$ appears exactly $n-1$ times in $\pi_2$.
    
\item $S^{B2}$ - the set of vector partitions $(\pi_1, \pi_2)$ 
where $\pi_1$ has smallest part $n$ and $\pi_2$ consists of exactly
one copy of the part $n$.
        
\item $S^{C1}$ - the set of vector partitions $(\pi_1, \pi_2)$ where the 
smallest part of $\pi_1$ is $n$ and 
the parts $m$ of $\pi_2$ are odd and satisfy $1\leq m\le 2n-1$.
    
\item $S^{C2}$ - the set of vector partitions $(\pi_1, \pi_2)$ 
where the smallest part of $\pi_1$ is $n$;
the parts $m$ of $\pi_2$ satisfy $1\leq m\leq 2n-1$;
the parts of $\pi_2$ other than $n$ are odd;
if $n$ is odd, then $n$ appears at least once in $\pi_2$;
and if $n$ is even, then $n$ appears exactly once in $\pi_2$.
    
\item $S^{C5}$ - the set of vector partitions $(\pi_1, \pi_2, \pi_3)$ 
where the smallest part of $\pi_1$ is $n$;
the parts $m$ of $\pi_2$ are odd and satisfy $1\leq m\leq 2n-1$;
and $\pi_3$ consists of exactly one copy of the part $\frac{n(n - 1)}{2}$.        
        
\item $S^{L5}$ - the set of vector partitions $(\pi_1, \pi_2, \pi_3)$ 
where $\pi_1$ has smallest part $n$;
the parts $m$ of $\pi_2$ are odd and satisfy $1\leq m\leq 2n-1$;
and the parts $m$ of $\pi_3$ are distinct and satisfy  
$0\leq m \leq n-1$ (so that $\pi_3$ may contain $0$ at most once). 
    
\item $S^{\X{38}}$ - the set of vector partitions $(\pi_1, \pi_2, \pi_3)$ 
where $\pi_1$ has smallest part $n$;
the parts $m$ of $\pi_2$ satisfy $n+1\leq m\leq 2n$;
and the parts $m$ of $\pi_3$ are distinct even parts and satisfy
$0\leq m\leq 2n-2$ (so that $\pi_3$ may contain the part $0$ at most once).
    
\item $S^{\X{39}}$ - the set of vector partitions $(\pi_1, \pi_2, \pi_3)$ 
where $\pi_1$ has smallest part $n$;
the parts $m$ of $\pi_2$ satisfy $n\leq m\leq 2n$;
the part $n$ appears exactly once in $\pi_2$;
and the parts $m$ of $\pi_3$ are distinct even parts and satisfy
$0\leq m\leq 2n-2$ (so that $\pi_3$ may contain the part $0$ at most once).
    
\item $S^{\X{41}}$ - the set of vector partitions $(\pi_1, \pi_2, \pi_3)$ 
where $\pi_1$ has smallest $n$;
the parts $m$ of $\pi_2$ satisfy $n+1\leq m\leq 2n$;
and the parts $m$ of $\pi_3$ are distinct even parts and satisfy
$1\leq m\leq 2n-2$.
    
\item $S^{\X{42}}$ - the set of vector partitions $(\pi_1, \pi_2, \pi_3)$ 
where $\pi_1$ has smallest $n$;
the parts $m$ of $\pi_2$ satisfy $n\leq m\leq 2n$;
the part $n$ appears exactly once in $\pi_2$;
and the parts $m$ of $\pi_3$ are distinct even parts and satisfy
$1\leq m\leq 2n-2$.
    
\item $S^{E4}$ - the set of vector  partitions $(\pi_1, \pi_2)$ 
where $\pi_1$ has smallest part $n$; 
the parts $m$ of $\pi_2$ are distinct and satisfy
$n\leq m$; and the part $n$ appears exactly once in $\pi_2$.    

\item $S^{I14}$ - the set of vector partitions $(\pi_1, \pi_2, \pi_3)$ 
where $\pi_1$ has smallest part $n$; 
the parts $m$ of $\pi_2$ satisfy $1\leq m\leq 2n-1$;
the even parts of $\pi_2$ are distinct;
and the parts $m$ of $\pi_3$ are distinct and satisfy
$n\leq m$.

\item $S^{\X{46}}$ - the set of vector partitions $(\pi_1, \pi_2, \pi_3, \pi_4)$ 
where $\pi_1$ has smallest part $n$;
the parts $m$ of $\pi_2$ are distinct multiples of $3$ and satisfy $1\leq m\leq 3n-3$;
the parts $m$ of $\pi_3$ are distinct and satisfy $n+1\leq m$;
and the parts $m$ of $\pi_3$ satisfy $n+1\leq m \leq 2n-1$.
    
\item $S^{J1}$ - the set of  vector partitions $(\pi_1, \pi_2)$ 
where $\pi_1$ has smallest part $n$;
$\pi_2$ has no part $m$ such that $2n\leq m\leq 3n-1$;
and the parts $m$ of $\pi_2$ satisfying $3n\leq m$ are
multiplies of $3$.
    
\item $S^{J2}$ - the set of vector partitions $(\pi_1, \pi_2)$ 
where $\pi_1$ has smallest part $n$;
$\pi_2$ has no part $m$ such that $m=n$ or $2n+1\leq m\leq 3n-1$;
and the parts $m$ of $\pi_2$ satisfying $3n\leq m$ are multiples of $3$.
        
\item $S^{J3}$ - the set of vector partitions $(\pi_1, \pi_2)$ 
where $\pi_1$ has smallest part $n$;
$\pi_2$ has no part $m$ such that $2n+1\leq m\leq 3n-1$;
the part $n$ appears exactly once in $\pi_2$;
and the parts $m$ of $\pi_2$ satisfying $3n\leq m$ are multiples of $3$.
        
\item $S^{\X{40}}$ - the set of vector partitions $(\pi_1, \pi_2, \pi_3)$ 
where $\pi_1$ has smallest part $n$;
the parts $m$ of $\pi_2$ satisfy $1\leq m\leq n-1$;
the parts $m$ of $\pi_3$ satisfying $1\leq m\leq 2n-1$ are odd;
and the parts $m$ of $\pi_3$ satisfying $2n\leq m$ are even.

\item $S^{F3}$ - the set of vector partitions $(\pi_1, \pi_2)$ 
where the parts of $\pi_1$ are even and the smallest part is $2n$; 
$\pi_2$ contains exactly one copy of the negative integer $-n$; and the
positive parts $m$ of $\pi_2$ are odd and satisfy $1\leq m\leq 2n-1$.
    
\item $S^{G1}$ - the set of vector partitions $(\pi_1, \pi_2, \pi_3)$ 
where the parts of $\pi_1$ are even and the smallest part is  $2n$;
the parts $m$ of $\pi_2$ are multiples of $4$ and satisfy $1\leq m\leq 4n$;
the parts $m$ of $\pi_3$ satisfy $2n+1\leq m$;
and the odd parts of $\pi_3$ are distinct.
    
\item $S^{G3}$ - the set of vector partitions $(\pi_1, \pi_2, \pi_3)$ 
where the parts of $\pi_1$ are even and the smallest part is  $2n$;
the parts $m$ of $\pi_2$ are multiples of $4$ and satisfy $1\leq m\leq 4n$;
the parts $m$ of $\pi_3$ satisfy $2n\leq m$;
the odd parts of $\pi_3$ are distinct;
and the part $2n$ appears exactly once in $\pi_3$.
     
\item $S^{Y1}$ - the set of vector partitions $(\pi_1,\pi_2,\pi_3,\pi_4)$ 
where the parts of $\pi_1$ are even and the smallest part is $2n$;
the parts $m$ of $\pi_2$ are multiples of $4$ and satisfy $1\leq m\leq 4n$;
the parts $m$ of $\pi_3$ satisfying $1\leq m\leq 2n-1$ are odd;
the parts $m$ of $\pi_3$ satisfying $2n+2\leq m$ are even;
the parts $2n$ and $2n+1$ do not appear in $\pi_3$;
and $\pi_4$ consists of exactly one copy of the part $n^2-2n$ (which
in the case of $n=1$ is $-1$).
 
\item $S^{Y2}$ - the set of vector partitions $(\pi_1,\pi_2,\pi_3,\pi_4)$ 
where the parts of $\pi_1$ are even and the smallest part is $2n$;
the parts $m$ of $\pi_2$ are multiples of $4$ and satisfy $1\leq m\leq 4n$;
the parts $m$ of $\pi_3$ satisfying $1\leq m\leq 2n-1$ are odd;
the parts $m$ of $\pi_3$ satisfying $2n+2\leq m$ are even;
the parts $2n$ and $2n+1$ do not appear in $\pi_3$;
and $\pi_4$ consists of exactly one copy of the part $n^2$.
    
\item $S^{Y3}$ - the set of vector partitions $(\pi_1,\pi_2,\pi_3)$ 
where the parts of $\pi_1$ are even and the smallest part is $2n$;
the parts $m$ of $\pi_2$ are multiples of $4$ and satisfy $1\leq m\leq 4n$;
the parts $m$ of $\pi_3$ satisfying $1\leq m\leq 2n-1$ are odd;
the parts $m$ of $\pi_3$ satisfying $2n+2\leq m$ are even;
and the parts $2n$ and $2n+1$ do not appear in $\pi_3$.
    
\item $S^{Y4}$ - the set of vector partitions $(\pi_1,\pi_2,\pi_3)$ 
where the parts of $\pi_1$ are even and the smallest part is $2n$;
the parts $m$ of $\pi_2$ are multiples of $4$ and satisfy $1\leq m\leq 4n$;
the parts $m$ of $\pi_3$ satisfying $1\leq m\leq 2n-1$ are odd;
the parts $m$ of $\pi_3$ satisfying $2n\leq m$ are even;
and the part $2n$ appears exactly once in $\pi_3$.    
    
\item $S^{L2}$ - the set of vector partitions $(\pi_1, \pi_2)$ 
where the parts of $\pi_1$ are multiplies of $4$ and the smallest part is $4n$;
the parts $m$ of $\pi_2$ satisfy $m\not\equiv 2\pmod{4}$ and
$4n+1\leq m$;
and the parts $m$ of $\pi_2$ that are multiples of $4$ additionally satisfy
$m\leq 8n$.
\end{itemize}
\end{definition}

\begin{theorem}
Suppose the assumptions and notation of Theorem \ref{TheoremMain}
and let
\begin{align*}
	\beta'_{n}(q) 
	&= 
		(q^{n + 1})_{\infty}(q)_{n}^{2} P_X(q) \beta_{n}
.
\end{align*}
Suppose $k$ is a positive integer and let $A$ denote the set of all compositions
of $k$, then
\begin{align*}
	\sum_{n=1}^{\infty} \hspt{X}{k}{n}q^n 
	&= 
		\sum_{(m_1,\dotsc,m_r)=\vec{m} \in A} 
		\hspace{.5em}
		\sum_{1 \leq n_{1} < n_{j_2}< \dotsb <  n_{j_r}} 
		\sum_{f_1=m_1}^{\infty} \sum_{f_{j_2}=m_2}^{\infty} 
		\dotsm \sum_{f_{j_r}=m_r}^{\infty} 
		\binom{f_{1}+m_1-1}{2m_1-1} 
		\\&\quad	
		\times  \binom{f_{j_2}+m_2}{2m_2} \dotsm 
		\binom{f_{j_r}+m_r}{2m_r} q^{n_1f_1+n_{j_2}f_{j_2}+ \dotsb +n_{j_r}f_{j_r}}
		\beta_{n_1}'(q) \prod_{\substack{i > n_1 \\ i \notin \{n_{j_2},\dotsc ,n_{j_r}\}}} \frac{1}{1 - q^i}
	.
\end{align*} 

In particular for all of the Bailey pairs considered in Corollary 
\ref{CorollarySeriesIdentities},  
we have for all positive $k$ and $n$ that
\begin{align*}
	\sum_{n=1}^\infty
	\hspt{X}{k}{n}q^n
	&= 
		\sum_{\vec{\pi} \in S^{X}}\omega_{k}(\vec{\pi}) q^{|\vec{\pi}|}
	.
\end{align*}
\end{theorem}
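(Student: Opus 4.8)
The plan is to derive both assertions from identity (\ref{EqMainTheoremSymmetrizexRankCrankDifference}) of Theorem \ref{TheoremMain} by means of a purely combinatorial expansion of a nested $q$-series. Using the definition $\beta'_{n}(q) = (q^{n+1})_\infty (q)_n^2 P_X(q)\beta_n$ and $(q^{n+1})_\infty = (q)_\infty/(q)_n$, the right-hand side of (\ref{EqMainTheoremSymmetrizexRankCrankDifference}) becomes
\begin{align*}
	\sum_{n=1}^\infty \hspt{X}{k}{n}q^n
	&=
	\sum_{n_k \ge \dotsb \ge n_1 \ge 1}
	\frac{\beta'_{n_1}(q)\, q^{n_1 + \dotsb + n_k}}
	{(q^{n_1+1})_\infty (1-q^{n_1})^2 \dotsm (1-q^{n_k})^2}
	.
\end{align*}
This is exactly the series expanded by Garvan in \cite{Garvan1} in the case $\beta'_{n_1}(q)=1$, and since $\beta'_{n_1}(q)$ depends only on the smallest index $n_1$, his argument applies with that factor carried through unchanged; so the first step is essentially a citation of \cite{Garvan1} with a parameter.

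For completeness I would spell out the expansion. In a summand indexed by $n_k \ge \dotsb \ge n_1 \ge 1$, group the indices according to their distinct values: the smallest value $n_1$ occurs with some multiplicity $m_1\ge 1$ among the indices, and the remaining distinct values $n_{j_2} < \dotsb < n_{j_r}$ occur with multiplicities $m_2,\dotsc,m_r\ge 1$, so $(m_1,\dotsc,m_r)$ ranges over all compositions of $k$ and this correspondence is a bijection. The cluster at $n_1$ contributes $q^{m_1 n_1}/(1-q^{n_1})^{2m_1}$; the cluster at each $n_{j_\ell}>n_1$ contributes $q^{m_\ell n_{j_\ell}}/(1-q^{n_{j_\ell}})^{2m_\ell}$, and after absorbing the single factor $(1-q^{n_{j_\ell}})^{-1}$ pulled out of $(q^{n_1+1})_\infty^{-1}$ it becomes $q^{m_\ell n_{j_\ell}}/(1-q^{n_{j_\ell}})^{2m_\ell+1}$; the leftover factors of $(q^{n_1+1})_\infty^{-1}$ give $\prod_{i>n_1,\ i\notin\{n_{j_2},\dotsc,n_{j_r}\}}(1-q^i)^{-1}$. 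Extracting the coefficient of $q^{n_1 f_1}$ from $q^{m_1 n_1}/(1-q^{n_1})^{2m_1}$ gives $\binom{f_1+m_1-1}{2m_1-1}$ (vanishing unless $f_1\ge m_1$), and the coefficient of $q^{n_{j_\ell} f_{j_\ell}}$ from $q^{m_\ell n_{j_\ell}}/(1-q^{n_{j_\ell}})^{2m_\ell+1}$ gives $\binom{f_{j_\ell}+m_\ell}{2m_\ell}$ (vanishing unless $f_{j_\ell}\ge m_\ell$). Summing over all compositions, all choices $n_1<n_{j_2}<\dotsb<n_{j_r}$, and all admissible frequencies, with $\beta'_{n_1}(q)$ left in place, yields precisely the first displayed formula of the theorem. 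Comparing with the definition of $\omega_k$, whenever $\beta'_{n_1}(q)$ is the generating function for a family of auxiliary vector partitions prescribed in terms of $n_1$, the right-hand side equals $\sum_{\vec\pi}\omega_{k}(\vec\pi) q^{|\vec\pi|}$ over the corresponding set of vector partitions.

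It then remains to check, for each of the $28$ Bailey pairs in Corollary \ref{CorollarySeriesIdentities}, that $\beta'_n(q)$ is the generating function of the auxiliary components $(\pi_2,\dotsc,\pi_r)$ described in the set $S^X$. In every case one simplifies $(q^{n+1})_\infty (q)_n^2 P_X(q)\beta_n$ using identities such as $(q)_{2n}=(q)_n(q^{n+1})_n$ and $(q^2;q^2)_n=(q)_n(-q)_n$ to obtain a finite product of $q$-Pochhammer symbols, possibly times a power of $q$, that is immediately recognizable as the claimed generating function; for the pairs relative to $(1,q^2)$ and $(1,q^4)$ one performs the same computation after $q\mapsto q^2$ or $q\mapsto q^4$ and absorbs a compensating power of $q$ into the monomial attached to the smallest part. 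For instance, B(2) gives $\beta'_n(q)=q^n$, A(5) gives $q^{n^2}/(q^{n+1})_n$, and E(4) gives $q^n(-q^{n+1})_\infty$, matching $S^{B2}$, $S^{A5}$, $S^{E4}$ respectively; the remaining cases are similar routine computations.

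I expect the main obstacle to be twofold. First, making the cluster-versus-composition bookkeeping of the expansion fully rigorous: verifying the two binomial extractions, the bijection between compositions of $k$ and ordered clusters, and the fact that borrowing exactly one factor from $(q^{n_1+1})_\infty^{-1}$ is what turns the $\binom{f+m-1}{2m-1}$ weight appropriate to the smallest part into the $\binom{f+m}{2m}$ weight for the other distinguished parts. Second, in the few cases where $\beta'_n(q)$ carries a negative power of $q$ (notably F3, and Y1 when $n=1$), interpreting the relevant component as a ``partition'' permitted to contain a single non-positive part, and tracking exponents carefully through the $q\mapsto q^2$ rescaling so that the part-size restrictions defining $S^X$ come out exactly as stated.
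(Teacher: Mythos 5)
Your proposal is correct and follows essentially the same route as the paper: starting from identity (\ref{EqMainTheoremSymmetrizexRankCrankDifference}) rewritten in terms of $\beta'_{n_1}(q)$, splitting the weakly increasing index sums by equality patterns (equivalently, compositions of $k$), extracting the binomial weights via $x^j/(1-x)^{2j}$ and $x^j/(1-x)^{2j+1}$ with one factor borrowed from $(q^{n_1+1})_\infty^{-1}$ for each distinguished larger part, and then identifying $\beta'_n(q)$ with the generating function of the auxiliary components of $S^X$ case by case (your sample computations for B(2), A(5), E(4) and the caveat about non-positive parts for F3/Y1 match the paper's remarks).
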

\begin{proof}
The proof is much the same as that of Theorem 5.6 from \cite{Garvan1} and its
generalization to Theorem 3.1 from \cite{JenningsShaffer3}.
We first note that 
\begin{align}\label{EqTheoremCombinatorics}
	\sum_{n=1}^{\infty} \hspt{X}{k}{n}q^n 
	&= 
	\sum_{1 \leq n_1 \leq n_2 \leq \dotsb \leq n_k } 
	\frac{ \beta'_{n_1}(q) q^{n_1+n_2+\dotsb +n_k}} 
	{(q^{n_1+1})_{\infty}(1-q^{n_k})^2\dotsm (1-q^{n_1})^2}
	.
\end{align}
We will use that
\begin{align*}
	\frac{x^j}{(1-x)^{2j}} 
	&=
		\sum_{n = j}^{\infty}\binom{n + j - 1}{2j - 1}x^n 
	,&
	\frac{x^j}{(1-x)^{2j+1}}
	&= 
		\sum_{n = j}^{\infty}\binom{n + j}{2j}x^n 
	.
\end{align*}

To illustrate the series rearrangements of the general proof, we first write 
out in full detail the case when $k = 3$. For $k=3$ we have
\begin{align*}
	&\sum_{n=1}^{\infty} \hspt{X}{3}{n}q^n 
	= 
		\sum_{n_{j_3} \geq n_{j_2} \geq n_{1} \geq 1}
		\frac{\beta_{n_1}'(q)q^{n_{1} + n_{j_2}  + n_{j_3}}}
		{(q^{n_{1} + 1})_{\infty}(1-q^{n_{j_3}})^{2}(1-q^{n_{j_2}})^{2}(1-q^{n_{1}})^{2}} 
	\\
	&= 
		\left(
		\sum_{1 \leq n_{1} = n_{j_2} =  n_{j_3}} 
		+ 
		\sum_{1 \leq n_{1} = n_{j_2} <  n_{j_3}} 
		+ 
		\sum_{1 \leq n_{1} < n_{j_2} =  n_{j_3}}   
		+
		\sum_{1 \leq n_{1} < n_{j_2} <  n_{j_3}}
		\right)
		\frac{\beta_{n_1}'(q)q^{n_{1} + n_{j_2}  + n_{j_3}}}
			{(q^{n_{1} + 1})_{\infty}(1-q^{n_{j_3}})^{2}(1-q^{n_{j_2}})^{2}(1-q^{n_{1}})^{2}}   
	\\
	&=
		\sum_{1 \leq n_1} 
		\frac{q^{3n_1}}{(1-q^{n_1})^6}\beta_{n_1}'(q)\prod_{i >n_1}\frac{1}{1-q^i}  
		+ 
		\sum_{1 \leq n_1 < n_{j_3}} 
		\frac{q^{2n_1}}{(1-q^{n_1})^4} \frac{q^{n_{j_3}}}{(1-q^{n_{j_3}})^3}
		\beta_{n_1}'(q)\prod_{\substack{i > n_1 \\ i \neq n_{j_3}}} \frac{1}{1 - q^i} 
		\\&\quad
		+ 
		\sum_{1 \leq n_1 < n_{j_2}} 
		\frac{q^{n_1}}{(1-q^{n_1})^2} \frac{q^{2n_{j_2}}}{(1-q^{n_{j_2}})^5}
		\beta_{n_1}'(q)\prod_{\substack{i > n_1 \\ i \neq n_{j_2}}} \frac{1}{1 - q^i}
		\\&\quad
		+
		\sum_{1 \leq n_1 < n_{j_2} <n_{j_3}} 
		\frac{q^{n_1}}{(1-q^{n_1})^2} \frac{q^{n_{j_2}}}{(1-q^{n_{j_2}})^3}\frac{q^{n_{j_3}}}{(1-q^{n_{j_3}})^3}
		\beta_{n_1}'(q)\prod_{\substack{i > n_1 \\ i \neq n_{j_2},n_{j_3}}} \frac{1}{1 - q^i}  
	\\
	&= 
		\sum_{1 \leq n_{1}} \sum_{f_1=3}^{\infty} 
		\binom{f_1+3-1}{6-1}q^{n_1f_1}\beta_{n_1}'(q)\prod_{i >n_1}\frac{1}{1-q^i}
		\\&\quad
		+  
		\sum_{1 \leq n_1 < n_{j_3}} \sum_{f_1=2}^{\infty} 
		\binom{f_1+2-1}{4-1}q^{n_1f_1}\sum_{f_{j_3}=1}^{\infty} 
		\binom{f_{j_3}+1}{2}q^{n_{j_3}f_{j_3}}
		\beta_{n_1}'(q)\prod_{\substack{i > n_1 \\ i \neq n_{j_3}}} \frac{1}{1 - q^i} 
		\\&\quad
		+  
		\sum_{1 \leq n_1 < n_{j_2}} \sum_{f_1=1}^{\infty} 
		\binom{f_1+1-1}{2-1}q^{n_1f_1}\sum_{f_{j_2}=2}^{\infty} 
		\binom{f_{j_2}+2}{4}q^{n_{j_2}f_{j_2}}
		\beta_{n_1}'(q)\prod_{\substack{i > n_1 \\ i \neq n_{j_2}}} \frac{1}{1 - q^i} 
		\\&\quad
		+  
		\sum_{1 \leq n_1 < n_{j_2}< n_{j_3}} \sum_{f_1=1}^{\infty} 
		\binom{f_1+1-1}{2-1}q^{n_1f_1}\sum_{f_{j_2}=1}^{\infty} 
		\binom{f_{j_2}+1}{2}q^{n_{j_2}f_{j_2}}\sum_{f_{j_3}=1}^{\infty} 
		\binom{f_{j_3}+1}{2}q^{n_{j_3}f_{j_3}}
		\beta_{n_1}'(q)
		\hspace{-.5em}		
		\prod_{\substack{i > n_1 \\ i \neq n_{j_2},n_{j_3}}} \frac{1}{1 - q^i}
	.
\end{align*}
Now the set of compositions of 3 is $A= \{(3),(2,1),(1,2),(1,1,1)\}$, and so we 
have that
\begin{align*}
	\sum_{n=1}^{\infty} \hspt{X}{3}{n}q^n 
	&= 
		\sum_{(m_1,\dotsc ,m_r)=\vec{m} \in A} 
		\hspace{.5em}
		\sum_{1 \leq n_{1} < n_{j_2}<\dotsb <  n_{j_r}} 
		\sum_{f_1=m_1}^{\infty} \sum_{f_{j_2}=m_2}^{\infty} \dotsm \sum_{f_{j_r}=m_r}^{\infty} 
		\binom{f_{1}+m_1-1}{2m_1-1} 
		\\&\quad
		\times  
		\binom{f_{j_2}+m_2}{2m_2} \dotsm \binom{f_{j_r}+m_r}{2m_r}
		q^{n_1f_1+n_{j_2}f_{j_2}+\dotsb +n_{j_r}f_{j_r}}\beta'(q) 
		\prod_{\substack{i > n_1 \\ i \notin \{n_{j_2},\dotsc ,n_{j_r}\}}} \frac{1}{1 - q^i}
	. 
\end{align*} 

For general $k$ we take the expression for 
$\sum_{n= 1}^{\infty} \hspt{X}{k}{n}q^{n}$ in (\ref{EqTheoremCombinatorics}) 
and split the sum into $2^{k-1}$ sums by turning the index bounds into $<$ 
or $=$, each of which corresponds to a composition of $k$. In particular
we recall that the $2^{k-1}$ compositions of $k$ can be obtained by
taking a list of $k$ ones and between each one we put either a plus or a
comma. Given the index bounds $n_1 \leq n_2 \leq \dotsb \leq n_k$,
we make a choice of each $\leq$ being ``$=$'' or ``$<$'';
we associate to this the composition 
$1^{+}\hspace{-.5em}, 1^{+}\hspace{-.5em},\dotsc  {{^{+}}1} \hspace{-1em},\hspace{1em}$
where in order we choose ``$+$'' when we chose ``$=$'' and choose ``$,$'' when we chose 
``$<$''.

If we let $A$ be the 
set of all compositions of $k$, with the manipulations discussed above, we 
have that 
\begin{align*}
	\sum_{n=1}^{\infty} \hspt{X}{k}{n}q^n 
	&= 
		\sum_{(m_1,\dotsc,m_r)=\vec{m} \in A} 
		\hspace{.5em}
		\sum_{1 \leq n_{1} < n_{j_2}< \dotsb <  n_{j_r}} 
		\sum_{f_1=m_1}^{\infty} \sum_{f_{j_2}=m_2}^{\infty} 
		\dotsm \sum_{f_{j_r}=m_r}^{\infty} 
		\binom{f_{1}+m_1-1}{2m_1-1} 
		\\&\quad	
		\times  \binom{f_{j_2}+m_2}{2m_2} \dotsm 
		\binom{f_{j_r}+m_r}{2m_r} q^{n_1f_1+n_{j_2}f_{j_2}+ \dotsb +n_{j_r}f_{j_r}}
		\beta_{n_1}'(q) \prod_{\substack{i > n_1 \\ i \notin \{n_{j_2},\dotsc ,n_{j_r}\}}} \frac{1}{1 - q^i}
	.
\end{align*} 
This we recognize as the generating function for vector partitions 
$\vec{\pi}=(\pi_1,...,\pi_r)$ counted according to the weight 
$\omega_k(\vec{\pi})$ where $\beta_{n_1}'(q)$ determines the types of 
partitions in $(\pi_2,\dotsc,\pi_r)$.
\end{proof}
We note that taking $k=1$ means that for $\hspt{X}{1}{n}$ we simply count the 
number of appearances of the smallest part in $\pi_1$, and many of these 
functions have been studied elsewhere.
In particular, those $\hspt{X}{1}{n}$ which possess simple
linear congruences were studied by Garvan and the second author in
\cite{GarvanJenningsShaffer2, JenningsShaffer1, JenningsShaffer2};
$\hspt{C1}{1}{n}$ was studied by Andrews, Dixit, and Yee in 
\cite{AndrewsDixitYee1}; and 
$\hspt{C1}{1}{n}$, $\hspt{C5}{1}{n}$, and $\hspt{J1}{1}{n}$
were studied by Patkowski in \cite{Patkowski1, Patkowski2}.

To demonstrate these weighted counts, we compute a table of values for
$\hspt{X}{k}{n}$ with $X=A1,B2$,  $k=1,2,3$, and $n=4,5$. 
We note that the first three weights are given by
\begin{align*}
	\omega_{1}(\pi) 
	&= 
		f_{1}^{1}(\pi) 
	,\\
	\omega_{2}(\pi) 
	&= 
		\binom{f_{1}^{1}(\pi) + 1}{3} 
		+ 
		f_{1}^{1}(\pi)\sum_{2 \leq j}\binom{f_{j}^{1}(\pi) + 1}{2} 
	,\\
	\omega_{3}(\pi) 
	&= 
		\binom{f_{1}^{1}(\pi) + 2}{5} 
		+ 
		\binom{f_{1}^{1}(\pi) + 1}{3}\sum_{2 \leq j}\binom{f_{j}^{1}(\pi) + 1}{2} 
		+ 
		f_{1}^{1}(\pi)\sum_{2 \leq j}\binom{f_j + 2}{4} 
		\\&\quad
		+ 
		f_{1}^{1}(\pi)\sum_{2 \leq j < k}\binom{f_{j}^{1}(\pi) + 1}{2}\binom{f_{k}^{1}(\pi) + 1}{2}
	.  
\end{align*}
\begin{align*}
\begin{array}{c}
\begin{array}{c|c|c|c|c|c|c}
  	\multicolumn{7}{c}{\mbox{T{\sc able} 2. }A1\mbox{ Partitions of }4} 
  	\\
	&f_{1}^1   & f_{2}^1   & f_{3}^1   & \omega_1   & \omega_2 & \omega_3 
	\\\hline
	(4, \varnothing) 			& 1 & 0 & 0 & 1 & 0 & 0 \\
	(3+1, \varnothing) 			& 1 & 1 & 0 & 1 & 1 & 0\\
	(2 + 2, \varnothing) 		& 2 & 0 & 0 & 2 & 1 & 0\\
	(2+1+1, \varnothing) 		& 2 & 1 & 0 & 2 & 3 & 1\\
	(1+1+1+1, \varnothing) 		& 4 & 0 & 0 & 4 & 10 & 6\\
	(1+1, 2) 					& 2 & 0 & 0 & 2 & 1 & 0\\
	\mbox{Total} &&&&12&16&7
\end{array}
\\\\\\\\\\\\
\end{array}
&\quad\quad
\begin{array}{c|c|c|c|c|c|c}
  	\multicolumn{7}{c}{\mbox{T{\sc able} 3. }A1\mbox{ Partitions of }5} 
	\\
   	& f_{1}^1              & f_{2}^1       & f_{3}^1          & \omega_1               & \omega_2 & \omega_3 
   	\\\hline
	(5, \varnothing) 			& 1 & 0 & 0 & 1 & 0 & 0 \\
	(4+1, \varnothing) 			& 1 & 1 & 0 & 1 & 1 & 0\\
	(3+2, \varnothing) 			& 1 & 1 & 0 & 1 & 1 & 0\\
	(3+1+1, \varnothing) 		& 2 & 1 & 0 & 2 & 3 & 1\\
	(2+2+1, \varnothing) 		& 1 & 2 & 0 & 1 & 3 & 1\\
	(2+1+1+1, \varnothing) 		& 3 & 1 & 0 & 3 & 7 & 5\\
	(1+1+1+1+1, \varnothing) 	& 5 & 0 & 0 & 5 & 20 & 21\\
	(2, 3) 						& 1 & 0 & 0 & 1 & 0 & 0\\
	(2+1, 2) 					& 1 & 1 & 0 & 1 & 1 & 0 \\
	(1+1+1, 2) 					& 3 & 0 & 0 & 3 & 4 & 1\\
	(1, 2+2) 					& 1 & 0 & 0 & 1 & 0 & 0\\
\mbox{Total} &&&&20&40&29
\end{array}
\\
\begin{array}{c|c|c|c|c|c|c}
	\multicolumn{7}{c}{\mbox{T{\sc able} 4. }B2\mbox{ Partitions of }4} 
	\\
	& f_{1}^1	& f_{2}^1      & f_{3}^1   	& \omega_1 & \omega_2 		& \omega_3
	\\\hline
	(2, 2) 			& 1 & 0 & 0 & 1 & 0 & 0
	\\
	(2+1, 1) 		& 1 & 1 & 0 & 1 & 1 & 0
	\\
	(1 + 1 + 1, 1) 	& 3 & 0 & 0 & 3 & 4 & 1
	\\
	\mbox{Total} &&&&5&5&1
\end{array}
&\quad\quad
\begin{array}{c|c|c|c|c|c|c}
	\multicolumn{7}{c}{\mbox{T{\sc able} 5. }B2\mbox{ Partitions of }5} 
	\\
	& f_{1}^1	& f_{2}^1      & f_{3}^1   	& \omega_1 & \omega_2 		& \omega_3
	\\\hline
	(3+1, 1) 		& 1 & 1 & 0 & 1 & 1 & 0\\
	(2+1+1, 1) 		& 2 & 1 & 0 & 2 & 3 & 1 \\
	(1+1+1+1, 1)	& 4 & 0 & 0 & 4 & 10& 6\\
	\mbox{Total} 			&	& 	&	& 7	& 14&7
\end{array}
\end{align*}

\section{Conjectures and Concluding Remarks}

As demonstrated, Bailey's Lemma is well suited to give inequalities between
the moments of rank-like Lambert series and corresponding crank functions, as 
well as supplying the combinatorial interpretation of the difference of the
symmetrized moments. In our work, we focused our attention to the $a=1$
Bailey pairs of Slater \cite{Slater1,Slater2} with $\alpha_n=\alpha_{-n}$.
We have also seen that it is possible to work with Bailey pairs from other
sources that satisfy the same conditions. However it is not true that all
Bailey pairs, even from Slater's lists, satisfy $\alpha_n=\alpha_{-n}$. We leave
it open to the interested reader to see to what extent one can develop 
similar identities that lead to similar results.

We have studied these new rank moments are far as possible
while handling them in generality. However, we expect each function to possess
interesting properties of its own. In particular, based on how the four
prototypical examples from \cite{Garvan1, JenningsShaffer3} behave, we expect
each ordinary rank moment generating function to correspond to a quasi-mock 
modular form, each ordinary crank moment to correspond to a quasi-modular form, 
and potentially an equation exists relating the certain partial 
derivatives of these rank and crank functions. Using these automorphic properties
one could hope to derive asymptotic formulas for the moments, such as was done
in \cite{BringmannMahlburgRhoades1,BringmannMahlburgRhoades2}.

As we have already seen, a large number of inequalities exist between the 
various moments. We conjecture the following diagram gives all of the inequalities.
Here a downward path from $A_{2k}$ to $B_{2k}$ indicates $A_{2k}(n)\geq B_{2k}(n)$ 
for all positive $k$ and $n$. We have split the diagram into two pieces, to 
decrease the height and handle the large number of crossings.
Besides the rank and crank moments defined in this
article, we also include the overpartition rank $\overline{\eta}_{2k}$, the 
the overpartition M2-rank $\overline{\eta2}_{2k}$, the 
overpartiton M2-crank $\overline{\mu2}_{2k}$, the M2-rank for partitions without
repeated odd parts $\eta2_{2k}$, and the M2-crank for partitions without repeated
odd parts $\mu2_{2k}$ from \cite{JenningsShaffer3}. It is likely some of these
inequalities can be proved using the identities of this article, but to get the
full picture one will need additional techniques. These inequalities have been 
verified for $1\leq k\leq 10$ and $1\leq n< 1000$.

\begin{tikzpicture}[scale=0.25]
	\node (MOverpartition) at (11,38) {$\overline{\mu}_{2k}$};
	\node (NE4) at (11,35) {$\eta^{E4}_{2k}$};
	\node (MX40) at (9,32) {$\mu^{\X{40}}_{2k}$};
	\node (NOverpartition) at (13,32) {$\overline{\eta}_{2k}$};
	\node (MJ) at (7,29) {$\mu^{J}_{2k}$};
	\node (M) at (5,26) {$\mu_{2k}$};
	\node (NJ3) at (9,26) {$\eta^{J3}_{2k}$};
	\node (NB2) at (7,23) {$\eta^{B2}_{2k}$};
	\node (N) at (7,20) {$\eta_{2k}$};
	\node (NX46) at (15,29) {$\eta^{\X{46}}_{2k}$};
	\node (MOverpartitionM2) at (18,26) {$\overline{\mu2}_{2k}$};
	\node (NOverpartitionM2) at (21,23) {$\overline{\eta2}_{2k}$};
	\node (NJ2) at (15,20) {$\eta^{J2}_{2k}$};
	\node (NI14) at (21,20) {$\eta^{I14}_{2k}$};
	\node (NA5) at (3,20) {$\eta^{A5}_{2k}$};
	\node (NA3) at (3,17) {$\eta^{A3}_{2k}$};
	\node (NX42) at (1,14) {$\eta^{\X{42}}_{2k}$};
	\node (NC2) at (1,11) {$\eta^{C2}_{2k}$};
	\node (NA1) at (9,11) {$\eta^{A1}_{2k}$};
	\node (MG) at (15,11) {$\mu^{G}_{2k}$};
	\node (NG3) at (15,8) {$\eta^{G3}_{2k}$};
	\node (NA7) at (6,14) {$\eta^{A7}_{2k}$};
	\node (NX41) at (9,8) {$\eta^{\X{41}}_{2k}$};
	\node (NC5) at (9,4) {$\eta^{C5}_{2k}$};
	\node (NG1) at (17,4) {$\eta^{G1}_{2k}$};
	\node (MY) at (9,1) {$\mu^{Y}_{2k}$};
	\node (M2) at (13,1) {$\mu 2_{2k}$};
	\node (NC1) at (17,1) {$\eta^{C1}_{2k}$};
	\node (NX39) at (5,8) {$\eta^{\X{39}}_{2k}$};
	\node (NJ1) at (23,13) {$\eta^{J1}_{2k}$};
	\node (NX40) at (20.5,10) {$\eta^{\X{40}}_{2k}$};
	\node (NX38) at (23.5,10) {$\eta^{\X{38}}_{2k}$};
	\node (NF3) at (26.5,10) {$\eta^{F3}_{2k}$};

	\draw (MOverpartition) -- (NE4);
	\draw (NE4) -- (MX40);
	\draw (NE4) -- (NOverpartition);
	\draw (MX40) -- (MJ);
	\draw (MJ) -- (M);
	\draw (MJ) -- (NJ3);
	\draw (M) -- (NB2);
	\draw (NJ3) -- (NB2);
	\draw (NB2) -- (N);
	\draw (NOverpartition) -- (NX46);
	\draw (NX46) -- (MOverpartitionM2);
	\draw (MOverpartitionM2) -- (NOverpartitionM2);
	\draw (MOverpartitionM2) -- (MG);
	\draw (NX46) -- (N);
	\draw (NX46) -- (NJ2);
	\draw (NOverpartitionM2) -- (NI14);
	\draw (NJ3) -- (NJ2);
	\draw (NJ2) -- (NJ1);
	\draw (NI14) -- (NJ1);
	\draw (NJ1) -- (NX40);
	\draw (NJ2) -- (MG);
	\draw (MG) -- (NG3);
	\draw (MG) -- (NC5);
	\draw (NI14) -- (NG1);
	\draw (NG3) -- (NG1);
	\draw (NG1) -- (NC1);
	\draw (NG3) -- (M2);
	\draw (NB2) -- (NA5);
	\draw (NA5) -- (NA3);
	\draw (NA3) -- (NX42);
	\draw (NA3) -- (NA7);
	\draw (NX42) -- (NC2);
	\draw (NA7) -- (NA1);
	\draw (N) -- (NA1);
	\draw (NA1) -- (NX41);
	\draw (NX42) -- (NX41);
	\draw (NC2) -- (NX39);
	\draw (NX39) -- (NC5);
	\draw (NX41) -- (NC5);
	\draw (MG) -- (NC5);
	\draw (NC5) -- (MY);
	\draw (NC5) -- (M2);
	\draw (NC5) -- (NC1);
	\draw (NX46) -- (NA7);
	\draw (NJ1) -- (NX38);
	\draw (NJ1) -- (NF3);

	\node (AMY) at 	(5+\xOffSetForTikzPicture,22+\yOffSetForTikzPicture) {$\mu^{Y}_{2k}$};
	\node (ANG3) at (9+\xOffSetForTikzPicture,22+\yOffSetForTikzPicture) {$\eta^{G3}_{2k}$};
	\node (AM2) at (5+\xOffSetForTikzPicture,30+\yOffSetForTikzPicture) {$\mu2_{2k}$};
	\node (ANG1) at (19+\xOffSetForTikzPicture,22+\yOffSetForTikzPicture) {$\eta^{G1}_{2k}$};
	\node (AMF) at (1+\xOffSetForTikzPicture,13+\yOffSetForTikzPicture) {$\mu^{F}_{2k}$};
	\node (ANY2) at (5+\xOffSetForTikzPicture,18+\yOffSetForTikzPicture) {$\eta^{Y2}_{2k}$};
	\node (ANY4) at (9+\xOffSetForTikzPicture,18+\yOffSetForTikzPicture) {$\eta^{Y4}_{2k}$};
	\node (ANC1) at (13+\xOffSetForTikzPicture,18+\yOffSetForTikzPicture) {$\eta^{C1}_{2k}$};
	\node (AN2) at (17+\xOffSetForTikzPicture,18+\yOffSetForTikzPicture) {$\eta2_{2k}$};
	\node (ANY3) at (7+\xOffSetForTikzPicture,13+\yOffSetForTikzPicture) {$\eta^{Y3}_{2k}$};
	\node (ANY1) at (11+\xOffSetForTikzPicture,13+\yOffSetForTikzPicture) {$\eta^{Y1}_{2k}$};
	\node (AML2) at (19+\xOffSetForTikzPicture,13+\yOffSetForTikzPicture) {$\mu^{L2}_{2k}$};
	\node (ANX40) at (19+\xOffSetForTikzPicture,5+\yOffSetForTikzPicture) {$\eta^{\X{40}}_{2k}$};
	\node (ANL2) at (19+\xOffSetForTikzPicture,9+\yOffSetForTikzPicture) {$\eta^{L2}_{2k}$};
	\node (ANX38) at (15+\xOffSetForTikzPicture,13+\yOffSetForTikzPicture) {$\eta^{\X{38}}_{2k}$};
	\node (ANF3) at (5+\xOffSetForTikzPicture,5+\yOffSetForTikzPicture) {$\eta^{F3}_{2k}$};
	\node (ANL5) at (13+\xOffSetForTikzPicture,1+\yOffSetForTikzPicture) {$\eta^{L5}_{2k}$};

	\draw (AMY) -- (AMF);
	\draw (AMY) -- (ANY2);
	\draw (AMY) -- (ANY4);
	\draw (ANG3) -- (ANY4);
	\draw (AM2) -- (AN2);
	\draw (ANG1) -- (AN2);
	\draw (AMF) -- (ANX40);
	\draw (ANY2) -- (ANY3);
	\draw (ANY2) -- (ANY1);
	\draw (ANY4) -- (ANY3);
	\draw (ANY4) -- (ANY1);
	\draw (ANC1) -- (ANY3);
	\draw (ANC1) -- (ANY1);
	\draw (ANC1) -- (ANX38);
	\draw (ANC1) -- (AML2);
	\draw (AN2) -- (AML2);
	\draw (ANY3) -- (ANX40);
	\draw (AML2) -- (ANL2);
	\draw (ANL2) -- (ANX40);
	\draw (AM2) -- (AMF);
	\draw (ANG3) -- (ANY2);
	\draw (AMF) --	(ANF3);
	\draw (ANL2) --	(ANF3);
	\draw (ANL2) --	(ANL5);
	\draw (ANY1) --	(ANL5);
	\draw (ANY3) --	(ANL5);
	\draw (ANX38) -- (ANL5);
	\draw (ANF3) --	(ANL5);
\end{tikzpicture}

Additionally based on numerical evidence, it would appear that some of the 
higher order spt functions satisfy a number of congruences. While a few of 
these may follow by elementary means, to approach these likely one should 
start with the automorphic properties of the moments mentioned above, as this 
is the method that worked for the original examples.
We conjecture the following congruences,
\begin{align*}
	0
	&\equiv	
	\hspt{B2}{3}{4n+3}
	\equiv	
	\hspt{E4}{2}{31n}
	\equiv	
	\hspt{E4}{2}{41n}
	\equiv	
	\hspt{E4}{2}{47n}
	\equiv	
	\hspt{E4}{2}{16n+1}
	\equiv	
	\hspt{E4}{2}{32n+2}
	\\&\equiv	
	\hspt{E4}{2}{8n+7}
	\equiv	
	\hspt{E4}{2}{49n+7}
	\equiv	
	\hspt{E4}{2}{49n+14}
	\equiv	
	\hspt{E4}{2}{18n+15}
	\equiv	
	\hspt{E4}{2}{24n+17}
	\\&\equiv	
	\hspt{E4}{2}{40n+17}
	\equiv	
	\hspt{E4}{2}{36n+21}
	\equiv	
	\hspt{E4}{2}{45n+21}
	\equiv	
	\hspt{E4}{2}{49n+21}
	\equiv	
	\hspt{E4}{2}{32n+28}
	\\&\equiv	
	\hspt{E4}{2}{49n+28}
	\equiv	
	\hspt{E4}{2}{36n+30}
	\equiv	
	\hspt{E4}{2}{40n+33}
	\equiv	
	\hspt{E4}{2}{45n+33}
	\equiv	
	\hspt{E4}{2}{48n+34}
	\\&\equiv	
	\hspt{E4}{2}{49n+35}
	\equiv	
	\hspt{E4}{2}{45n+39}
	\equiv	
	\hspt{E4}{2}{45n+42}
	\equiv	
	\hspt{E4}{2}{49n+42}
	\equiv	
	\hspt{E4}{3}{16n+3}
	\\&\equiv	
	\hspt{E4}{3}{32n+10}
	\equiv	
	\hspt{E4}{3}{16n+13}
	\equiv	
	\hspt{E4}{3}{49n+21}
	\equiv	
	\hspt{E4}{3}{32n+22}
	\equiv	
	\hspt{E4}{3}{49n+35}
	\\&\equiv	
	\hspt{E4}{3}{49n+42}
	\equiv	
	\hspt{E4}{4}{31n}
	\equiv	
	\hspt{E4}{4}{47n}
	\equiv	
	\hspt{E4}{4}{32n+1}
	\equiv	
	\hspt{E4}{4}{16n+7}
	\equiv	
	\hspt{E4}{4}{32n+14}
	\\&\equiv	
	\hspt{E4}{4}{48n+17}
	\equiv	
	\hspt{E4}{5}{32n+11}
	\equiv	
	\hspt{E4}{5}{32n+13}
	\equiv	
	\hspt{E4}{6}{31n}
	\equiv	
	\hspt{E4}{6}{32n+17}
	\\&\equiv	
	\hspt{E4}{6}{32n+23}
	\equiv	
	\hspt{E4}{7}{32n+29}
	\equiv	
	\hspt{E4}{8}{31n}
	\equiv	
	\hspt{E4}{8}{32n+7}
	\equiv	
	\hspt{\X{41}}{3}{4n+2}
	\equiv	
	\hspt{\X{42}}{3}{44n+28}
	\\&\equiv
	\hspt{\X{46}}{2}{48n+1}
	\equiv	
	\hspt{\X{46}}{2}{24n+7}
	\equiv	
	\hspt{\X{46}}{2}{48n+14}
	\equiv	
	\hspt{\X{46}}{2}{24n+17}
	\equiv	
	\hspt{\X{46}}{2}{24n+23}
	\\&\equiv	
	\hspt{\X{46}}{2}{48n+34}
	\equiv	
	\hspt{\X{46}}{2}{48n+46}
	\equiv	
	\hspt{\X{46}}{3}{48n+11}
	\equiv	
	\hspt{\X{46}}{3}{48n+13}
	\equiv	
	\hspt{\X{46}}{3}{48n+29}
	\\&\equiv	
	\hspt{\X{46}}{3}{48n+43}
	\equiv	
	\hspt{\X{46}}{4}{48n+7}
	\equiv	
	\hspt{\X{46}}{4}{48n+17}
	\equiv	
	\hspt{\X{46}}{4}{32n+23}
	\equiv	
	\hspt{\X{46}}{4}{48n+23}
	\\&\equiv
	\hspt{\X{40}}{2}{46n+3}
	\equiv	
	\hspt{\X{40}}{2}{49n+15}
	\equiv	
	\hspt{\X{40}}{2}{49n+29}
	\equiv	
	\hspt{\X{40}}{2}{41n+36}
	\equiv	
	\hspt{\X{40}}{2}{49n+36}
	\\&\equiv	
	\hspt{\X{40}}{3}{4n}
	\equiv	
	\hspt{\X{40}}{3}{18n+2}	
	\equiv	
	\hspt{\X{40}}{3}{18n+14}
	\equiv	
	\hspt{\X{40}}{3}{49n+15}
	\equiv	
	\hspt{\X{40}}{3}{49n+29}
	\\&\equiv	
	\hspt{\X{40}}{3}{49n+36}
	\equiv	
	\hspt{\X{40}}{6}{8n}
	\equiv	
	\hspt{\X{40}}{6}{8n+7}
	\equiv	
	\hspt{\X{40}}{6}{41n+36}
	\equiv		
	\hspt{F3}{2}{4n+1}
	\equiv	
	\hspt{F3}{3}{4n}
	\\&\equiv	
	\hspt{F3}{4}{8n+1}
	\equiv	
	\hspt{F3}{5}{8n+4}
	\equiv	
	\hspt{F3}{6}{8n+5}
	\equiv	
	\hspt{F3}{8}{16n+1}
	\equiv	
	\hspt{F3}{10}{16n+5}
	\pmod{2}
	,\\
	0
	&\equiv
	\hspt{A3}{2}{9n}
	\equiv	
	\hspt{B2}{4}{3n}
	\equiv	
	\hspt{E4}{3}{27n+15}
	\equiv	
	\hspt{E4}{4}{27n+6}
	\equiv	
	\hspt{\X{46}}{2}{9n}
	\equiv	
	\hspt{\X{46}}{2}{24n+11}
	\\&\equiv	
	\hspt{\X{46}}{2}{32n+12}
	\equiv	
	\hspt{\X{46}}{8}{27n}
	\equiv	
	\hspt{F3}{2}{6n+1}
	\equiv	
	\hspt{L2}{3}{27n+26}
	\pmod{3}
	,\\
	0
	&\equiv
	\hspt{L5}{4}{44n+28}
	\equiv	
	\hspt{E4}{2}{16n+14}
	\equiv	
	\hspt{E4}{2}{36n+33}
	\pmod{4}
	,\\
	0
	&\equiv
	\hspt{A1}{2}{5n}
	\equiv	
	\hspt{A1}{2}{5n+1}
	\equiv	
	\hspt{A1}{4}{25n+24}
	\equiv	
	\hspt{A1}{5}{25n+24}
	\equiv	
	\hspt{A3}{2}{5n+1}
	\equiv	
	\hspt{A3}{2}{5n+2}
	\\&\equiv	
	\hspt{A3}{2}{5n+4}
	\equiv	
	\hspt{A5}{2}{5n}
	\equiv	
	\hspt{A5}{2}{5n+4}
	\equiv	
	\hspt{A5}{3}{25n+9}
	\equiv	
	\hspt{A5}{3}{25n+14}
	\equiv	
	\hspt{A7}{2}{5n+1}
	\\&\equiv	
	\hspt{A7}{2}{5n+4}
	\equiv	
	\hspt{A7}{3}{25n+24}
	\equiv	
	\hspt{B2}{2}{5n+1}
	\equiv	
	\hspt{B2}{2}{5n+2}
	\equiv	
	\hspt{B2}{2}{5n+4}
	\equiv	
	\hspt{B2}{3}{25n+1}
	\\&\equiv	
	\hspt{B2}{3}{25n+9}
	\equiv	
	\hspt{B2}{4}{25n+2}
	\equiv	
	\hspt{B2}{4}{25n+4}
	\equiv	
	\hspt{B2}{8}{25n+12}
	\equiv	
	\hspt{C5}{2}{5n}
	\equiv	
	\hspt{C5}{2}{5n+1}
	\\&\equiv	
	\hspt{C5}{2}{5n+4}
	\equiv	
	\hspt{C5}{3}{25n+24}
	\equiv	
	\hspt{C5}{4}{25n+3}
	\equiv	
	\hspt{C5}{4}{25n+5}
	\equiv	
	\hspt{C5}{4}{25n+24}
	\\&\equiv	
	\hspt{C5}{5}{25n+4}
	\equiv	
	\hspt{C5}{5}{25n+24}
	\equiv	
	\hspt{C5}{6}{25n+5}
	\equiv	
	\hspt{C5}{6}{25n+10}
	\equiv	
	\hspt{E4}{2}{5n}
	\equiv	
	\hspt{E4}{2}{5n+2}
	\\&\equiv	
	\hspt{F3}{3}{25n+3}
	\equiv	
	\hspt{F3}{3}{25n+23}
	\equiv	
	\hspt{Y1}{5k}{10n+3}
	\equiv	
	\hspt{Y1}{5}{25n+8}
	\equiv	
	\hspt{Y1}{10}{25n+8}
	\\&\equiv	
	\hspt{Y2}{5k}{10n+9}
	\pmod{5}
	,\\
	0
	&\equiv
	\hspt{A1}{2}{49n+12}
	\equiv	
	\hspt{A1}{3}{49n+47}
	\equiv	
	\hspt{A3}{2}{49n+19}
	\equiv	
	\hspt{A5}{2}{7n+1}
	\equiv	
	\hspt{A5}{3}{7n}
	\equiv	
	\hspt{A5}{3}{7n+1}
	\\&\equiv	
	\hspt{A5}{3}{7n+3}
	\equiv	
	\hspt{A5}{3}{7n+5}
	\equiv	
	\hspt{A5}{6}{7n+5}
	\equiv	
	\hspt{A7}{2}{7n}
	\equiv	
	\hspt{A7}{2}{7n+1}
	\equiv	
	\hspt{A7}{3}{7n}
	\\&\equiv	
	\hspt{A7}{3}{7n+1}
	\equiv	
	\hspt{A7}{3}{7n+2}
	\equiv	
	\hspt{A7}{3}{7n+4}
	\equiv	
	\hspt{A7}{5}{49n+47}
	\equiv	
	\hspt{A7}{6}{49n+47}
	\\&\equiv	
	\hspt{B2}{2}{7n+1}
	\equiv	
	\hspt{B2}{2}{7n+5}
	\equiv	
	\hspt{B2}{3}{7n}
	\equiv	
	\hspt{B2}{3}{7n+1}
	\equiv	
	\hspt{B2}{3}{7n+3}
	\equiv	
	\hspt{B2}{3}{7n+5}
	\\&\equiv	
	\hspt{B2}{4}{49n+1}
	\equiv	
	\hspt{B2}{5}{49n+33}
	\equiv	
	\hspt{B2}{6}{7n+5}
	\pmod{7}	
	,\\
	0
	&\equiv
	\hspt{E4}{2}{32n+30} \pmod{8}
	,\\
	0
	&\equiv	
	\hspt{\X{46}}{5}{27n}
	\equiv
	\hspt{\X{46}}{5}{27n+18}
	\pmod{9}
	,\\
	0
	&\equiv
	\hspt{B2}{2}{11n+1}
	\pmod{11}
	,\\
	0
	&\equiv	
	\hspt{A5}{3}{25n+24}
	\equiv
	\hspt{B2}{2}{25n+14}
	\equiv
	\hspt{C5}{2}{25n+24} 
	\equiv
	\hspt{F3}{2}{25n+23}
	\pmod{25}
	,\\
	0
	&\equiv
	\hspt{B2}{3}{49n+26}
	\pmod{49}
	.
\end{align*}

Lastly, there is also the 
concept of positive moments, where $m$ ranges over just the positive integers,
rather than all of $\mathbb{Z}$, that is to say,
\begin{align*}
	N^+_k(n)
	&= 
		\sum_{m=1}^{\infty}m^k N(m,n)
	,&
	\eta^+_{k}(n)
	&= 
		\sum_{m=1}^{\infty}
		\binom{m+\lfloor\frac{k-1}{2}\rfloor}{k}N(m,n)
	,\\	
	M^+_k(n)
	&= 
		\sum_{m=1}^{\infty}m^kM(m,n)
	,&
	\mu^+_{k}(n)
	&= 
		\sum_{m=1}^{\infty}
		\binom{m+\lfloor\frac{k-1}{2}\rfloor}{k}M(m,n)
	.
\end{align*}
The advantage to these positive moments is that while 
$N_{2k}(n)=2N^{+}_{2k}(n)$ and $M_{2k}(n)=2M^{+}_{2k}(n)$, it is no
longer the case that the odd moments are zero. It is true that
$M^{+}_{2k+1}(n)>N^{+}_{2k+1}(n)$, and there have been several studies of
the positive moments corresponding to the rank and crank of ordinary
partitions as well as overpartitions, see
\cite{AndrewsChanKimOsburn1, AndrewsChanKim1, BringmannMahlburg1, 
LarsenRustSwisher1, ZapataRolon}.
As such we should expect that analogous results and inequalities hold for the 
moments of this article, however our methods do not directly apply
and it is not clear if one can handle positive moments in the generality we have
managed for the original moments.

\bibliographystyle{abbrv}
\bibliography{higherOrderSPTBaileyPairsRef}

\end{document}